\newtheorem{lemma}{Lemma}
\newtheorem{definition}{Definition}
\newtheorem{corollary}{Corollary}
\newtheorem{proposition}{Proposition}
\newtheorem{example}{Example}
\newtheorem{theorem}{Theorem}
\newcommand{\Z}{\mathbb{Z}}
\newcommand{\N}{\mathbb{N}}
\newcommand{\B}{\mathcal{B}}
\newcommand{\ID}{\mbox{id}}
\newcommand{\INF}{{}^\infty}
\newcommand{\Ker}[1]{\mathrm{Ker}(#1)}
\newcommand{\Syn}{\mathrm{Syn}}
\newcommand{\lcm}{\mathrm{lcm}}
\newcommand{\Sub}{\mathrm{Sub}}
\newcommand{\cat}{\mathcal{C}}
\newcommand{\catt}{\mathcal{D}}
\newcommand{\perleq}{\swarrow}
\newcommand{\pergeq}{\searrow}
\newcommand{\per}{\mathrm{per}}
\begin{document}
\begin{frontmatter}

\title{Category Theory of Symbolic Dynamics\tnoteref{t1}}
\tnotetext[t1]{Research supported by the Academy of Finland Grant 131558}

\author[utu]{Ville~Salo\corref{vs}}
\ead{vosalo@utu.fi}

\author[utu]{Ilkka~T\"orm\"a\corref{it}}
\ead{iatorm@utu.fi}

\address[utu]{TUCS -- Turku Centre for Computer Science \\ Department of Mathematics and Statistics \\ 20014 University of Turku, Finland \\ +358 2 333 6012}

\cortext[vs]{Corresponding author}
\cortext[it]{Principal corresponding author}

\begin{abstract}
We study the central objects of symbolic dynamics, that is, subshifts and block maps, from the perspective of basic category theory, and present several natural categories with subshifts as objects and block maps as morphisms. Our main goals are to find universal objects in these symbolic categories, to classify their block maps based on their category theoretic properties, to prove category theoretic characterizations for notions arising from symbolic dynamics, and to establish as many natural properties (finite completeness, regularity etc.) as possible. Existing definitions in category theory suggest interesting new problems in symbolic dynamics. Our main technical contributions are the solution to the dual problem of the Extension Lemma and results on certain types of conserved quantities, suggested by the concept of a coequalizer.
\end{abstract}

\begin{keyword}
symbolic dynamics \sep category theory \sep subshift
\end{keyword}
\end{frontmatter}

\section{Introduction}

Like many branches of mathematics, symbolic dynamics is the study of a category: the category with subshifts as objects and block maps as morphisms. Of particular interest is the case where the objects are SFTs or sofic shifts, and we will mostly concentrate on such subcategories. Like in many branches of mathematics, the first question that comes to mind is still open: `When are two objects isomorphic?', and a lot of mathematics has been developed trying to answer this question \cite{Bo08}. Questions such as `When is an object a subobject of another?' (the embedding problem) and `When does an object map onto another?' (the factoring problem) have been solved at least in important special cases (see Factor Theorem and Embedding Theorem in \cite{LiMa95}, and the article \cite{Bo83}).

A nice feature of category theory is that it allows one to define notions such as `isomorphism', `embedding' and `factoring' without referring to anything except morphisms between objects. For example, two objects $X$ and $Y$ are isomorphic, in the sense of category theory, if there exist morphisms $f : X \to Y$ and $g : Y \to X$ such that $g \circ f = \ID_X$ and $f \circ g = \ID_Y$. Usually, this turns out to be the `correct' notion for isomorphism. For embedding (and factoring), the situation is more complicated: often the most natural definition of an embedding is that it is injective. The notion of injectivity (and surjectivity) is, however, impossible to define categorically, since a category does not know what its morphisms actually are: they need not even be functions! Because of this, multiple categorical variants of injectivity have been defined, including monicness, split monicness and regular monicness. These are generalizations of the various ways in which an injective map should behave in relation to other morphisms. In sufficiently nice categories (for example, the category of sets), these all correspond to injectivity, but in many categories, they state some other property, and often raise new natural questions.

Because categorical notions depend on \emph{all} morphisms of the category, we define several categories (thirteen, to be exact) whose objects are subshifts and morphisms are block maps between them. We begin our study in Section~\ref{sec:Morphisms} by considering the various categorical definitions of injectivity and surjectivity for our categories. We see that these categories are not nearly as `nice' as that of sets, in that usually these notions do not characterize injectivity or surjectivity in our categories. Interestingly, the characterization of split monicness comes from the well-known Extension Lemma. The dual concept of split epicness turns out interesting as well, and we find a characterization in the SFT case. Our discussion of morphisms is motivated as the study of how well standard notions of category theory describe the world of block maps, and in Section~\ref{sec:Other}, we ask a kind of converse question of whether category theory can describe standard notions of symbolic dynamics. Here, we mostly concentrate on the properties of objects. 

In Section~\ref{sec:Limits}, we move on to important category theoretical closure properties: the existence of (finite) limits and colimits of diagrams. The importance of these notions is that many category theoretical definitions are just limits of diagrams of certain types. The case of coequalizers turns out to be the most intricate, and we present an undecidability result related to it. In Section~\ref{sec:Properties}, we discuss the existence of images, disjoint unions and quotients in a categorical sense, that is, whether our categories are regular, coherent and exact, respectively.

The topics of cellular automata (particular kinds of block maps) and category theory have been previously explicitly discussed together at least in \cite{CaUu10}, but the approach there is very different. There, cellular automata are constructed using category theoretical tools, and properties that `come for free' from category theoretical generalities are investigated.

\section{Definitions and Notation}
\label{sec:Defs}

In this section, we establish the basic terminology and notation used in this article. As we are working in the intersection of two quite distinct fields, symbolic dynamics and category theory, we try to be as complete as possible.

\subsection{Symbolic Dynamics}

For a finite set $S$ (an \emph{alphabet}) with the discrete topology, we denote by $S^\Z$ the space of two-way infinite \emph{configurations} (or \emph{points}) over $S$ with the product topology, and call it the \emph{full shift on $S$}. The \emph{shift action} $\sigma : S^\Z \to S^\Z$ is defined by $\sigma(x)_i = x_{i+1}$ for all $i \in \Z$. A fixed point of $\sigma$ is called a \emph{uniform point}. A closed subset $X$ of a full shift with $\sigma(X) = X$ is called a \emph{subshift}. We say that a word $w \in S^*$ occurs in $x \in S^\Z$ if $x_{[i,i+|w|-1]} = w$ for some $i \in \Z$, and also write $w \sqsubset x$. An alternative characterization of subshifts is by means of \emph{forbidden words}: $X \subset S^Z$ is a subshift if and only if there exists a set of words $F \subset S^*$ such that $X = \{x \;|\; \forall w \in F: w \not\sqsubset x\}$. If the set of forbidden words can be taken to be finite, $X$ is called a \emph{subshift of finite type}, or SFT for short, and if the set is a regular language, $X$ is called \emph{sofic}. If $X$ is an SFT and $Y$ is any subshift, then $X \cap Y$ is a \emph{subSFT} of $Y$. If the SFT (or subSFT of $Y$) $X$ can be defined by forbidden words of length at most $m$ (in addition to the forbidden words of $Y$), we say $m$ is a \emph{window size} of $X$ (relative to $Y$). We denote by $\sigma_X$ the restriction of $\sigma$ to $X$.

The words of length $n$ appearing in configurations of $X$ are denoted $\B_n(X)$, and we denote the \emph{language} of $X$ by $\B(X) = \bigcup_{n \in \N} \B_n(X)$. Since a subshift is defined by its language \cite{LiMa95}, we may also denote $X = \B^{-1}(L)$, if $L \subset S^*$ is an extendable language (for every $v \in L$ there exist $u, w \in S^+$ with $uvw \in L$) such that $\B(X)$ is the language of subwords of words in $L$. Usually, when using this notation, we write a regular expression in place of $L$. An SFT $X \subset S^\Z$ can also be defined by giving a set of \emph{allowed words} $A \subset S^n$ for some $n \in \N$ such that $X = \{x \in S^\Z \;|\; \forall i: x_{[i,i+n-1]} \in A\}$.

\begin{example}
The sofic shift $\B^{-1}(0^*10^*)$ consists of exactly those configurations of $\{0,1\}^\Z$ that contain at most one $1$. The subshift $\B^{-1}((0^*10)^*)$ is an SFT, and can be defined by the single forbidden word $11$. For a fixed $p \in \N$, the SFT $\B^{-1}((0^{p-1} 1)^*)$ contains exactly $p$ points with spatial period $p$, and we use it as a `canonical' $p$-periodic subshift. As a dynamical system, it is isomorphic to $(\Z_p, n \mapsto n + 1 \bmod p)$, the set of integers modulo $p$ with incrementation.
\end{example}

For two subshifts $X \subset S^\Z$ and $Y \subset R^\Z$, define $X \times Y \subset (S \times R)^\Z$ as the coordinatewise product
\[ \{ z \in (S \times R)^\Z \;|\; \ldots p_1(z_{-1}) p_1(z_0) p_1(z_1) \ldots \in X \wedge \ldots p_2(z_{-1}) p_2(z_0) p_2(z_1) \ldots \in Y \}, \]
where $p_1$ and $p_2$ are the appropriate projections from $S \times R$. Define also $X \mathop{\dot{\cup}} Y$ as their symbol-disjoint union, where we replace $S$ and $R$ with disjoint sets if necessary.

The \emph{syntactic monoid} $\Syn(X)$ of a subshift $X \subset S^\Z$ is defined as $S^* / {\sim_X}$, where $u \sim_X v$ denotes that $w u w' \in \B(X)$ if and only if $w v w' \in \B(X)$ for all $w, w' \in S^*$. We also denote $(v)_X = v / {\sim_X}$. It is known that sofic shifts are exactly those subshifts whose syntactic monoid is finite.

A \emph{block map} is a continuous function $f : X \to Y$ from a subshift to another with $f \circ \sigma_X = \sigma_Y \circ f$. Block maps are defined by \emph{local functions} $F : \B_{2r+1}(X) \to \B_1(Y)$ by $f(x)_i = F(x_{[i-r,i+r]})$, where $r \geq 0$ is called a \emph{radius} of $f$. If $f$ is surjective, $Y$ is a \emph{factor} of $X$, and if it is bijective, $X$ and $Y$ are \emph{conjugate}. The block map itself is called an \emph{embedding}, a \emph{factor map} or a \emph{conjugacy}, if it is injective, surjective or bijective, respectively. We say $f$ is \emph{preinjective} if $f(x) \neq f(y)$ whenever $x \neq y \in X$ are \emph{asymptotic}, that is, they differ in finitely many coordinates. If $X = Y$, then $f$ is called a \emph{cellular automaton} on $X$.

A nonempty subshift $X$ is \emph{minimal} if it does not properly contain another nonempty subshift. A subshift $X$ is \emph{transitive} (called \emph{irreducible} in \cite{LiMa95}) if for all $u,v \in \B(X)$ there exists $w \in \B(X)$ such that $uwv \in \B(X)$, and \emph{nonwandering} if for all $u \in \B(X)$ there exists $w \in \B(X)$ such that $uwu \in \B(X)$. It is \emph{mixing} if for all $u,v \in \B(X)$ there exists $N \in \N$ such that for all $n \geq N$ there exists $w \in \B_n(X)$ with $uwv \in \B(X)$.

It is known that for a mixing sofic shift, $N$ can be chosen independently of $u$ and $v$, and is called its \emph{mixing distance}. For a transitive SFT $X$, something similar is true:\footnote{We do not need these kinds of tools for general transitive sofic shifts.} We define $\per(X) = \gcd\{|w| \;|\; \INF w \INF \in X\}$, called the \emph{period} of $X$. It is well-known (although usually stated rather differently) that for all $p \in \N$, there exists a block map $\phi : X \to \B^{-1}((0^{(p-1)}1)^*)$ if and only if $p$ divides $\per(X)$ \cite[Section~4.5]{LiMa95}. Let $p = \per(X)$, and call $\phi$ the \emph{phase map} of $X$. There also exists a number $m \in \N$, a multiple of $p$, such that for all $u, v \in \B(X)$ there exists $k \in [0, p-1]$ and $w \in \B_{m+k}(X)$ such that $u w v \in \B(X)$. We call such an $m$ a \emph{transition distance} of $X$. If $\phi$ is the associated phase map, writing $X_0 = \phi^{-1}(\INF (0^{p-1}1) \INF)$ and $X_i = \sigma(X_{i-1})$ for $0 < i < p$, we have $X = \bigcup_{i = 0}^{p-1} X_i$, where the union is disjoint. If $r \in \N$ is the radius of $\phi$, then given a word $u \in \B(X)$ of length at least $2r+p$, there exists a unique number $i(u) \in [0, p-1]$, called the \emph{phase} of $u$, such that $x_{[0,|u|-1]} = u$ for some $x \in X_{i(u)}$. For two words $u, v \in \B(X)$ both of at least this length, there in fact exists a unique $k \in [0, p-1]$ such that $u w v \in \B(X)$ for some $w \in \B_{m+k}(X)$, namely $k = i(v) - i(u) - |u| \bmod p$.

A cellular automaton $f : X \to X$ is \emph{(topologically) mixing}, if for all $u, v \in \B(X)$, and for all large enough $n \in \N$ depending on the words, there exists $x \in X$ with $x_{[0, |u|-1]} = u$ and $f^n(x)_{[0, |v|-1]} = v$. It is \emph{chain transitive} if for all $n \in \N$ and $u, v \in \B_n(X)$, there exists a chain $x^1, \ldots, x^k \in X$ such that $x^1_{[0,n-1]} = u$, $x^k_{[0,n-1]} = v$ and for all $i$, $f(x^i)_{[0,n-1]} = x^{i+1}_{[0,n-1]}$. A CA $f : X \to X$ is \emph{sensitive (to initial conditions)} if there exists $k \in \N$ such that for all $uv \in \B(X)$ there exist $x, y \in X$ with $x_{[-|u|,|v|-1]} = y_{[-|u|,|v|-1]} = uv$ and $n \in \N$ such that $f^n(x)_{[0,k-1]} \neq f^n(y)_{[0,k-1]}$.The \emph{limit set} of the cellular automaton $f$ is the subshift $\bigcap_{n \in \N} f^n(X)$, and $f$ is \emph{stable} if the limit set equals $f^n(X)$ for some $n \in \N$.

For an arbitrary function $f : A \to A$, define $\Ker{f} = \{ (a, b) \;|\; f(a) = f(b) \} \subset A \times A$, and call it the \emph{kernel set} of $f$. For a set $A$, denote the \emph{diagonal} of $A$ by $\Delta_A = \{ (a, a) \;|\; a \in A \} \subset A \times A$.

\subsection{Category Theory}

We now recall some definitions of category theory. The standard reference for the subject is \cite{Ma71}, while \cite{Jo02} and \cite{Jo02a} were consulted for some of the less standard notions.

In this article, a \emph{category} $\cat$ consists of a class of \emph{objects}, and a class of \emph{morphisms}. Every morphism $f$ has a \emph{source object} $X$ and a \emph{target object} $Y$, and we write $f : X \to Y$. The set of morphisms from $X$ to $Y$ is denoted $\mathrm{Hom}_\cat(X,Y)$. Two morphisms $f : X \to Y$ and $g : Y \to Z$ can be \emph{composed} to obtain a morphism $g \circ f : X \to Z$. We sometimes suppress the operator ${\circ}$ and write $g f$ for $g \circ f$. The two axioms of a general category require that composition is associative (so $h (g f) = (h g) f$ when the compositions are defined), and that every object $X$ has a \emph{identity morphism} $\ID_X : X \to X$ with $f \circ \ID_X = f$ and $\ID_X \circ g = g$ for all $f : X \to Y$ and $g : Z \to X$. A category is \emph{concrete} if its morphisms are actually functions between the objects.

A morphism $f : X \to Y$ in a category $\cat$ is
\begin{itemize}
\item \emph{epic} or an \emph{epimorphism} if $g \circ f \neq h \circ f$ for all $g \neq h : Y \to Z$,
\item \emph{monic} or a \emph{monomorphism} if $f \circ g \neq f \circ g$ for all $g \neq h : Z \to X$,
\item \emph{split epic} if there exists $g : Y \to X$ with $f \circ g = \ID_Y$ (a \emph{section} of $f$),
\item \emph{split monic} if there exists $g : Y \to X$ with $g \circ f = \ID_X$ (a \emph{retract} of $f$),
\item an \emph{isomorphism} if there exists $g : Y \to X$ with $f \circ g = \ID_Y$ and $g \circ f = \ID_X$ (an \emph{inverse} of $f$).
\end{itemize}
Epimorphisms and monomorphisms are generalizations of surjective and injective functions, respectively.

A \emph{functor} $F$ from one category $\cat$ to another category $\catt$ associates to each object $X$ of $\cat$ an object $F(X)$ of $\catt$, and to each morphism $f : X \to Y$ of $\cat$ a morphism $F(f) : F(X) \to F(Y)$ of $\catt$, in such a way that $F(\ID_X) = \ID_{F(X)}$ and $F(f \circ g) = F(f) \circ F(g)$ always hold. It is \emph{full} (\emph{faithful}) if the induced map from $\mathrm{Hom}_\cat(X,Y)$ to $\mathrm{Hom}_\cat(F(X),F(Y))$ is surjective (injective, respectively) for all objects $X$ and $Y$ of $\cat$. It is \emph{essentially surjective} if for every object $X$ of $\catt$, there exists an object $Y$ of $\cat$ such that $F(Y)$ is isomorphic to $X$. We say $\cat$ and $\catt$ are \emph{equivalent} is there exists a full, faithful and essentially surjective functor from $\cat$ to $\catt$. It is well known that this notion is actually symmetric.

Let $f : Y \to X$ and $g : Z \to X$ be monomorphisms in a category $\cat$. We denote $f \leq g$ if there exists a morphism $h : Y \to Z$ such that $f = g \circ h$. Such an $h$ must be monic since $f$ is, and it is unique since $g$ is monic. If $h$ is an isomorphism, we say $f$ and $g$ are \emph{isomorphic}. A \emph{subobject} of $X$ is an isomorphism class of monomorphisms into $X$, and the class of all subobjects of $X$, denoted $\Sub(X)$, is partially ordered by $\leq$. We can also view $\Sub(X)$ as a category whose morphisms are the morphisms $h$ of $\cat$ as above.

A \emph{diagram} in a category $\cat$ is formally a functor from another category $\mathcal{I}$ to $\cat$. A \emph{cone} of a diagram $D : \mathcal{I} \to \cat$ is an object $C$ of $\cat$, together with morphisms $\phi_X : C \to D(X)$ for all objects $X$ of $\mathcal{I}$, such that for all morphisms $f : X \to Y$ in $\mathcal{I}$ we have $\phi_Y = D(f) \circ \phi_X$. The cone is a \emph{limit} of $D$, if for all other cones $C', (\phi'_X)_X$, there is a unique morphism $u : C' \to C$ with $\phi'_X = \phi_X \circ u$ for all objects $X$ of $\mathcal{I}$. The notions of \emph{co-cone} and \emph{colimit} are defined dually, that is, with the morphisms reversed. Limits and colimits of diagrams are unique up to a unique isomorphism.

For a category $\cat$, we define the following limits and colimits:
\begin{itemize}
\item a limit of the empty diagram is a \emph{terminal object}, and its colimit is an \emph{initial object},
\item a limit of the discrete diagram $X \enspace Y$ is a \emph{product} of $X$ and $Y$, and its colimit is their \emph{coproduct},
\item a limit of the diagram $X \stackrel{f}{\to} Z \stackrel{g}{\leftarrow} Y$ is a \emph{pullback} of $f$ and $g$, and the limit object is denoted by $X \times_Z Y$. The resulting morphism from $X \times_Z Y$ to $Y$ is the pullback of $f$ along $g$,
\item a limit of the diagram $X \rightrightarrows Y$ is an \emph{equalizer} of the two morphisms, and its colimit is their \emph{coequalizer}, and
\item a limit of the infinite diagram $X_0 \leftarrow X_1 \leftarrow X_2 \leftarrow \cdots$ is an \emph{inverse limit}.
\end{itemize}
In particular, an object $T$ is terminal (initial) if and only if for all objects $X$, there exists exactly one morphism from $X$ to $T$ (from $T$ to $X$, respectively). An object $Z$ which is both initial and terminal is called a \emph{zero object}, and for two objects $X$ and $Y$, the unique morphism from $X$ to $Y$ which factors through $Z$ is called a \emph{zero morphism}, and denoted $0_{X Y}$. The \emph{kernel pair} of a morphism $f$ is the pullback of $f$ with itself. The category $\cat$ is \emph{finitely (co-)complete}, if it has all (co)limits of finite diagrams (equivalently, a terminal (initial) object, all binary (co)products and (co)equalizers \cite{Ma71}).

Let $\cat$ be a category. A morphism $f : X \to Y$ in $\cat$ is \emph{regular epic (monic)} if it is the coequalizer (equalizer, respectively) of some pair of parallel morphisms. The category $\cat$ is \emph{regular} if
\begin{itemize}
\item $\cat$ is finitely complete,
\item the coequalizer of every kernel pair exists, and
\item the pullback of a regular epimorphism along any morphism is a regular epimorphism.
\end{itemize}

Let $f : X \to Y$ be any morphism in a finitely complete category $\cat$. Then, $f$ induces a functor $f_* : \Sub(Y) \to \Sub(X)$, called the \emph{base change functor}, by sending each subobject of $Y$ to its pullback along $f$. We say $\cat$ is \emph{coherent} if
\begin{itemize}
\item $\cat$ is regular,
\item every subobject poset $\Sub(X)$ has binary unions (least upper bounds), and
\item the binary unions are stable under base changes.
\end{itemize}

A \emph{congruence}, or \emph{internal equivalence relation}, on an object $X$ of a finitely complete category $\cat$ is an object $R$ together with four morphisms:
\begin{itemize}
\item a monomorphism $e : R \to X \times X$ (the embedding of $R$ into $X \times X$),
\item a morphism $r : X \to R$ that is a section to both $p_1 \circ e$ and $p_2 \circ e$ (the reflectivity morphism),
\item a morphism $s : R \to R$ such that $p_1 \circ e \circ s = p_2$ and $p_2 \circ e \circ s = p_1$ (the symmetry morphism), and
\item a morphism $t : R \times_X R \to R$, where $R \times_X R$ with the projections $q_1$ and $q_2$ is the pullback of the pair $(p_2 \circ e, p_1 \circ e)$, such that $p_1 \circ e \circ q_1 = p_1 \circ e \circ t$ and $p_2 \circ e \circ q_2 = p_2 \circ e \circ t$ (the transitivity morphism).
\end{itemize}
In particular, every kernel pair $X \times_Y X$ of a morphism $f : X \to Y$ is a congruence. A congruence is \emph{effective} if it is isomorphic to a kernel pair.
A category $\cat$ is \emph{exact} if it is regular and every congruence is effective.

A category $\cat$ is \emph{extensive} if it has all finite coproducts and pullbacks of coproduct injections, and in every commutative diagram
\begin{center}
\begin{tikzpicture}[node distance=1.25cm, auto]
	\node (a) {$A$};
	\node (ab) [right of=a] {$A \amalg B$};
	\node (b) [right of=ab] {$B$};
	\node (x) [above=.5cm of a]{$X$};
	\node (z) [right of=x] {$Z$};
	\node (y) [right of=z] {$Y$};
	\draw[->] (a) to node {} (ab);
	\draw[->] (b) to node {} (ab);
	\draw[->] (x) to node {} (z);
	\draw[->] (y) to node {} (z);
	\draw[->] (x) to node {} (a);
	\draw[->] (z) to node {} (ab);
	\draw[->] (y) to node {} (b);
\end{tikzpicture}
\end{center}
where $A \amalg B$ denotes the coproduct of $A$ and $B$, the two squares are pullback diagrams if and only if the top row is also a coproduct diagram. A reference for extensive categories is \cite{CaLaWa93}.

\section{Preliminary Results}
\label{sec:Prelims}

In this section, we establish some general lemmas and notions that will be of use later on.

\subsection{Kernel Sets and Factoring}

We start with a general lemma about compact Hausdorff spaces, inspired by the well-known corresponding result in the concrete category \textbf{Set}.

\begin{lemma}
\label{lem:ContConnector}
Let $f : X \to Y$ and $g : X \to Z$ be continuous functions with $\Ker{f} \subset \Ker{g}$, where $X$ is compact and $Y$ is Hausdorff. Then there exists a unique continuous function $u : f(X) \to g(X)$ such that $u \circ f = g$.
\end{lemma}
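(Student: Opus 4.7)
The plan is to define $u$ pointwise in the obvious way and then verify continuity using the quotient topology induced by $f$. Concretely, for each $y \in f(X)$ pick any $x$ with $f(x) = y$ and set $u(y) = g(x)$. The hypothesis $\Ker{f} \subset \Ker{g}$ guarantees well-definedness: if $f(x) = f(x')$, then $(x,x') \in \Ker{f} \subset \Ker{g}$, so $g(x) = g(x')$. Uniqueness of $u$ satisfying $u \circ f = g$ is also automatic, because $f$ is surjective onto its image, so its values determine any such $u$ completely. Note also that $u(f(X)) \subset g(X)$ is immediate from the definition.

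The only nontrivial point is continuity of $u$. The key observation is that $f : X \to f(X)$ is a quotient map. Indeed, $f(X)$ is a compact subset of the Hausdorff space $Y$, hence Hausdorff, and for any closed $C \subset X$, compactness of $X$ makes $C$ compact, so $f(C)$ is a compact subset of the Hausdorff space $f(X)$ and therefore closed there. Thus $f : X \to f(X)$ is a surjective continuous closed map, hence a quotient map.

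By the universal property of the quotient topology, a function $u : f(X) \to g(X)$ is continuous precisely when $u \circ f : X \to g(X)$ is continuous. But $u \circ f = g$, which is continuous by hypothesis (and its image lands in $g(X)$). Hence $u$ is continuous, completing the proof.

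There is no real obstacle here; the only conceptual step is to recognize that compactness of $X$ together with Hausdorffness of $Y$ forces $f$ to be a closed — and hence quotient — map onto its image, which is exactly what converts the set-theoretic factorization into a continuous one.
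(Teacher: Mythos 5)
Your proof is correct and rests on the same key fact as the paper's: a continuous surjection from a compact space onto a Hausdorff space is closed, hence a quotient map (the paper phrases this as the induced bijection $X/\Ker{f} \to f(X)$ being a homeomorphism). Your version is a slightly more streamlined organization of the same argument, dispensing with the explicit quotient spaces $X/\Ker{f}$ and $X/\Ker{g}$.
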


\begin{proof}
Since $X$ is compact, also the quotient spaces $X/{\Ker{f}}$ and $X/{\Ker{g}}$ are compact, and we have a continuous surjection $h : X/{\Ker{f}} \to X/{\Ker{g}}$ defined by $h(x/{\Ker{f}}) = x/{\Ker{g}}$. Now the function $f_* : X/{\Ker{f}} \to f(X)$ defined by $f_*(x/{\Ker{f}}) = f(x)$ is a continuous bijection from a compact space to a Hausdorff space, and thus has a continuous inverse, which we denote by $f_*^{-1}$. Then, $u = g \circ h \circ f_*^{-1}$ is a continuous function from $f(X)$ to $g(X)$ that satisfies $u \circ f = g$. Its uniqueness is clear from the proof.
\end{proof}

For us, the importance of Lemma~\ref{lem:ContConnector} is of course that it directly applies to the world of block maps.

\begin{corollary}
\label{cor:ContConnector}
Let $X, Y$ and $Z$ be subshifts and $f : X \to Y$ and $g : X \to Z$ block maps such that $\Ker{f} \subset \Ker{g}$. Then there exists a unique block map $u : f(X) \to g(X)$ such that $u \circ f = g$.
\end{corollary}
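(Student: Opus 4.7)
The plan is to invoke Lemma~\ref{lem:ContConnector} directly and then upgrade the resulting continuous map to a block map. Subshifts over finite alphabets, equipped with the product topology, are compact metrizable spaces, hence compact Hausdorff. So the hypotheses of Lemma~\ref{lem:ContConnector} are satisfied: $X$ is compact, $Z \supset g(X)$ is Hausdorff (we only need the codomain Hausdorff, and in fact $Y$ is as well), and $\Ker{f} \subset \Ker{g}$ by assumption. This immediately yields a unique continuous function $u : f(X) \to g(X)$ with $u \circ f = g$.

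The only remaining content is to check that $u$ is a block map, i.e.\ that $u$ commutes with the shift. This is a direct diagram chase. Fix $y \in f(X)$ and write $y = f(x)$ for some $x \in X$. Using $u \circ f = g$ and the fact that $f$ and $g$ are block maps, we compute
\[ u(\sigma_{f(X)}(y)) = u(f(\sigma_X(x))) = g(\sigma_X(x)) = \sigma_{g(X)}(g(x)) = \sigma_{g(X)}(u(f(x))) = \sigma_{g(X)}(u(y)). \]
Thus $u \circ \sigma_{f(X)} = \sigma_{g(X)} \circ u$. Combined with continuity, the Curtis--Hedlund--Lyndon theorem guarantees that $u$ is a block map between the sofic-or-subshift factors $f(X)$ and $g(X)$.

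Uniqueness of $u$ is inherited from Lemma~\ref{lem:ContConnector}: any block map $u'$ satisfying $u' \circ f = g$ is in particular a continuous function with that property, so $u' = u$ as functions. There is no real obstacle here; the only subtlety worth flagging is that $f(X)$ and $g(X)$ are indeed subshifts (being continuous shift-commuting images of a subshift, hence shift-invariant closed subsets of full shifts), so that the statement makes sense and the Curtis--Hedlund--Lyndon application is legitimate.
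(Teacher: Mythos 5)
Your proof is correct and follows essentially the same route as the paper: apply Lemma~\ref{lem:ContConnector} to get the unique continuous $u$, then check shift-equivariance to conclude $u$ is a block map. The only cosmetic difference is that you verify $u \circ \sigma_{f(X)} = \sigma_{g(X)} \circ u$ by a direct pointwise computation using the surjectivity of $f$ onto $f(X)$, whereas the paper deduces it from the uniqueness clause of the lemma by observing that the shift-conjugate of $u$ also satisfies the factorization identity; both arguments are valid.
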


\begin{proof}
The existence and uniqueness of a continuous $u$ is given by Lemma~\ref{lem:ContConnector}. We also have
\[ (\sigma_Z \circ u \circ \sigma_Y^{-1}) \circ f = \sigma_Z \circ u \circ f \circ \sigma_X^{-1} = \sigma_Z \circ g \circ \sigma_X^{-1} = g, \]
so by uniqueness $\sigma_Z \circ u \circ \sigma_Y^{-1} = u$, and $u$ is a block map.
\end{proof}

\subsection{Kernel Sets and Equivalence Relations}

\begin{definition}
A \emph{subshift relation} between subshifts $X$ and $Y$ is a subshift $R$ of $X \times Y$. We say $R$ is a \emph{subshift equivalence relation} if $X = Y$ and it is also an equivalence relation, that is, we have $\Delta_X \subset R$, $(y,x) \in R$ for all $(x,y) \in R$, and $(x,z) \in R$ whenever $(x,y), (y,z) \in R$ for some $y \in X$. A subshift equivalence relation $R \subset X^2$ is said to be \emph{local} if there exists $n \in \N$ and an equivalence relation $E \subset \B_n(X)^2$ such that $R$ is defined by the forbidden words $(S^n)^2 \setminus E$ (and those of $X^2$).
\end{definition}

Clearly, a local equivalence relation $R \subset X^2$ is a subSFT of $X^2$, and a subSFT equivalence relation is local if and only if it is the kernel set of some block map $f : X \to Y$. Note that if $R$ is defined by an equivalence relation in $\B_n(X)$, it can also be defined by an equivalence relation in $\B_m(X)$ for any $m \geq n$, implying that local subSFT relations are closed under finite intersections.

\begin{example}
Let $S = \{0,1,2,3,4,5\}$, and consider the SFT $X \subset S^\Z$ defined by the allowed words $\{00,01,02,03,14,24,25,35,40,50\}$ of length $2$. Define the subSFT relation $R \subset X^2$ by the allowed single-letter words $(1,2)$, $(2,1)$, $(2,3)$, $(3,2)$ and $(s,s)$ for all $s \in S$. It can be checked that $R$ is an equivalence relation, and we can choose $E$ to be the set of allowed words of $R$, plus $(1,3)$ and $(3,1)$, and $R$ is thus local. Note that the extra words of $E$ do not actually occur in any configuration of $R$.
\end{example}

We also have an example of a subSFT equivalence relation which is not local.

\begin{example}
\label{ex:NotLocal}
Consider the allowed words
\[
\left(\begin{array}{ccc}a&b&c\\a&b&c\end{array}\right),
\left(\begin{array}{ccc}a&b&c\\a&b&1-c\end{array}\right),
\left(\begin{array}{ccc}a&b&1-b\\a&1-b&b\end{array}\right), \]
\[
\left(\begin{array}{ccc}a&1-a&1-a\\1-a&a&a\end{array}\right),
\left(\begin{array}{ccc}a&a&a\\1-a&1-a&1-a\end{array}\right)
\]
where $a, b, c \in \{0, 1\}$, forming the SFT relation $R \subset (\{0,1\}^\Z)^2$. That is, $(x, y) \in R$ if and only if $x = y$ or for some $i \in \Z$, we have $x_{(-\infty, i)} = y_{(-\infty, i)}$, $x_{[i,\infty)} = ab^\infty$, $y_{[i,\infty)} = ba^\infty$ and $a \neq b$. On one-sided sequences, this is the relation of being the binary representation of the same number. If is easy to check that this is a transitive relation, as its orbits are of sizes $1$ or $2$.

Now, suppose $R = \Ker{f}$ for some block map $f$, where $f$ has radius $r$. Suppose $u \in \{0, 1\}^r$ is the binary representation of $k$ and $v \in \{0, 1\}^r$ that of $k + 1$. Then $u1^\infty \sim v0^\infty$, so that $f(u1^\infty) = f(v0^\infty)$. In particular, $f(u) = f(v)$. Since $u$ and $v$ were representations of any successive numbers between $0$ and $2^r-1$, $f$ must be a trivial map, a contradiction since $R$ is not the full relation. Thus, $R$ is not local.
\end{example}

We can also express some properties of block maps using their kernel sets. As an example, we characterize preinjectivity in the SFT case. For this and future use, we give the following definition, which comes from the general theory of topological dynamics.

\begin{definition}
A \emph{transitive component} of a subshift $X$ is a transitive subshift $Y \subset X$ which is maximal with respect to inclusion among the set of transitive subshifts of $X$. A transitive component which is also mixing is simply called a \emph{mixing component}.
\end{definition}

The transitive components of SFTs are exactly their irreducible components as defined in \cite[Section 4.4]{LiMa95}, and in particular, they form a finite set of mutually disjoint SFTs. Sofic shifts also have a finite number of transitive components, but they may not be mutually disjoint. A notion of irreducible components of sofic shifts was also defined in \cite{Th04}, but in a completely different way. Also, we note that for sofic shifts, mixing components and maximal mixing subshifts are different notions.

\begin{example}
Define a sofic shift by $X = \B^{-1}((00 + 01)^* + (00 + 02)^*)$. Then $X$ has two transitive components, $\B^{-1}((00 + 01)^*)$ and $\B^{-1}((00 + 02)^*$, which have the nonempty intersection $Y = \{\INF 0 \INF\}$. Furthermore, $Y$ is a maximal mixing subshift of $X$, but it is not a mixing component, since it is not a maximal transitive subshift.
\end{example}

\begin{lemma}
\label{lem:KernelComponent}
A block map $f : X \to Y$, where $X$ is a transitive SFT, is preinjective if and only if $\Delta_X$ is a transitive component of $\Ker{f}$.
\end{lemma}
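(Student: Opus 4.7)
The plan is to view $\Ker{f}$ as an SFT of $X^2$: since $f$ has a local rule $F$ of radius $r$, the kernel is defined by the local condition $F(x_{[-r,r]}) = F(y_{[-r,r]})$, so $\Ker{f}$ is an SFT whose constituents form a finite collection of mutually disjoint transitive SFTs. Now $\Delta_X \cong X$ is itself a transitive SFT sitting in $\Ker{f}$, hence contained in a unique constituent $C$; the lemma reduces to showing $C = \Delta_X$ iff $f$ is preinjective.

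For the forward direction I argue contrapositively. Suppose $\Delta_X \subsetneq C$. Since $X$ is a transitive SFT it has a periodic point $a$, giving $(a, a)^\infty \in \Delta_X \subset C$. Pick $(p, q) \in C \setminus \Delta_X$ and a factor $W = (p_{[-k,k]}, q_{[-k,k]}) \in \B(C)$ containing a mismatching position. Let $m$ be a window size of $C$ and set $U = (a_{[0,N-1]}, a_{[0,N-1]})$ for some $N \geq m$. By the irreducibility of the transitive SFT $C$ there are $W_1, W_2 \in \B(C)$ with $U W_1 W W_2 U \in \B(C)$, and by choosing the connecting words of suitable length (in the arithmetic progression of lengths available in a transitive SFT) I may further arrange that $|U W_1 W W_2 U|$ is a multiple of the period of $a$. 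Prepending and appending the periodic diagonal configuration $(a,a)^\infty$ yields a biinfinite configuration in which every length-$m$ window is either entirely within $U W_1 W W_2 U$ (hence in $\B_m(C)$) or consists of diagonal $(a,a)$-blocks (hence in $\B_m(\Delta_X) \subset \B_m(C)$); therefore the configuration lies in $C \subset \Ker{f}$ and has distinct but asymptotic coordinates, contradicting preinjectivity.

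For the converse, again by contrapositive, suppose there exist asymptotic $x \neq y$ in $X$ with $f(x) = f(y)$. Then $(x, y) \in \Ker{f} \setminus \Delta_X$ is asymptotic to $(x, x) \in \Delta_X$, so $\sigma^n(x, y)$ and $\sigma^n(x, x)$ agree on larger and larger windows as $n \to \pm\infty$ and therefore share all accumulation points, which lie in $\Delta_X$. Hence the orbit closure $T$ of $(x, y)$ is a transitive subshift of $\Ker{f}$ meeting $\Delta_X$. Since every transitive subshift of an SFT lies in some constituent and the constituents of $\Ker{f}$ are mutually disjoint, $T$ is contained in the unique constituent containing $\Delta_X$. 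Under the hypothesis $C = \Delta_X$ this forces $T \subset \Delta_X$, contradicting $(x, y) \in T \setminus \Delta_X$.

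The main technical obstacle is the forward gluing step: one must ensure that the periodic diagonal extensions mesh with the middle word $U W_1 W W_2 U$ without introducing an illegal length-$m$ window, which requires both $N \geq m$ and the mild arithmetic tweak that aligns the middle word's length with the period of $a$. The converse is a short structural invocation of the disjoint-constituent property of SFTs.
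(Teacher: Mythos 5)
Your overall strategy --- realize $\Ker{f}$ as a subSFT of $X^2$, locate $\Delta_X$ inside a constituent $C$, and show that $C = \Delta_X$ exactly when $f$ has no diamonds --- is the same as the paper's (very terse) argument, and your forward direction is a correct and careful expansion of it: the gluing of $\INF(a,a)$-tails onto $U W_1 W W_2 U$ works because every length-$m$ window of the resulting pair lies either in $\B(C)$ or in $\B(\Delta_X)$, both contained in $\B(\Ker{f})$. (In fact the arithmetic adjustment of $|U W_1 W W_2 U|$ is unnecessary: the right-hand tail may simply continue the trailing copy of $U$ from whatever phase of the periodic point it occupies.)

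The converse direction, however, rests on a false step. The orbit closure $T$ of $(x,y)$ is \emph{not} in general transitive in the sense used in the paper (for all $u,v \in \B(T)$ there is $w$ with $uwv \in \B(T)$): take $u = v$ to be a word witnessing the mismatch; no point of $T$ contains two disjoint occurrences of $u$, since every point of $T$ is a shift of $(x,y)$ or a limit of such shifts, and the mismatching region occurs exactly once in $(x,y)$. So $T$ need not be contained in any constituent, and the disjointness of constituents cannot be invoked. The conclusion is nevertheless true, and the repair goes through the graph structure of the SFT $\Ker{f}$: since $(x,y)$ is both left- and right-asymptotic to the diagonal, every vertex visited by its bi-infinite path in a graph presentation of $\Ker{f}$ is reachable from, and reaches, the strongly connected component carrying $\Delta_X$; hence every edge of that path lies in the maximal irreducible subgraph defining $C$, so $(x,y) \in C \setminus \Delta_X$ and $C \neq \Delta_X$ follows directly. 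Alternatively, one can splice a return word of $X$ into the common diagonal tails of $x$ and $y$ to produce a distinct periodic pair in $\Ker{f}$ whose finite (hence transitive) orbit lies in $C$ by the same reachability argument.
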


\begin{proof}
The subshift $\Delta_X$ is transitive, so it is contained in some transitive component $Z$ of $\Ker{f}$. We show that $Z = \Delta_X$ if and only if $f$ is preinjective. First, suppose that $Z \neq \Delta_X$, so that there exists a word $u \in \B(Z) \setminus \B(\Delta_X)$. Let $x \in \Delta_X$, and let $m \in \N$ be a window size for $X$ and $Z$. Since $Z$ is a transitive SFT, there exist $v, w \in \B(Z)$ such that $x_{[0, m-1]} v u w x_{[0, m-1]} \in \B(Z)$. Then the configuration $z = x_{(-\infty,m-1]} v u w x_{[0, \infty)}$ is in $Z$, and thus in $\Ker{f}$. Since $u \notin \B(\Delta_X)$, the images of $z$ under the two projections from $\Ker{f}$ to $X$ differ in finitely many coordinates, so $f$ is not preinjective.

Conversely, suppose we have $f(x) = f(y)$ for some $x, y \in X$ which differ in finitely many coordinates. Then there exists a word $w u v \sqsubset (x,y)$ such that $u \notin \B(\Delta_X)$ but $v, w \in \B_m(\Delta_X)$. The set $Z'$ of configurations $z \in X^2$ where each coordinate $i \in \Z$ either satisfies $z_{[i-m,i+m]} \in \B(\Delta_X)$ or is part of an occurrence of $w u v$ is a subSFT of $\Ker{f}$. Since $\Delta_X$ is transitive, so is $Z'$, and thus $\Delta_X \subsetneq Z' \subset Z$.
\end{proof}

In the sofic case, it is easy to find a counterexample.

\begin{example}
Let $X = \B^{-1}((0^*(10^*2 + 30^*4))^*)$, which is a mixing sofic shift. Let $f : X \to \{0, 1, 2, 4\}^\Z$ be defined by $f(x)_n = 1$ if $x_n = 3$, and $f(x)_n = x_n$ otherwise. It is easy to check that the only transitive component of $\Ker{f}$ is $\Delta_X$, but $f$ is not preinjective since $f(\INF 010\INF) = f(\INF 0 3 0\INF)$.
\end{example}

\subsection{Tools from Symbolic Dynamics}

Finally, we list some well known classical results of symbolic dynamics that we use repeatedly in the course of this article.

\begin{definition}
For two subshifts $X$ and $Y$, we denote $X \pergeq Y$ if the period of every periodic point of $X$ is divisible by the period of some periodic point of $Y$.
\end{definition}

The following result, taken from \cite{Bo83}, is very useful.

\begin{lemma}[Extension Theorem]
Let $f : X \to Y$ be a block map with $X \subset Z$, where $Y$ is a mixing SFT and $Z \pergeq Y$. Then there exists a block map $\tilde f : Z \to Y$ such that $\tilde f |_X = f$.
\end{lemma}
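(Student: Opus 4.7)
The plan is to adapt the standard marker-and-filler proof: use a Marker Lemma to decompose each configuration of $Z$ into blocks of controlled length, apply $f$ on blocks that locally look like $X$-patterns, insert canonical $Y$-filler words on other blocks, and bridge between adjacent blocks using the mixing of $Y$. Fix a radius $r$ for $f$ and let $N$ be the mixing distance of $Y$.

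First I would invoke the Marker Lemma to produce, for a large parameter $L \gg r + N$, a shift-invariant clopen set $M \subset Z$ such that in every non-periodic $z \in Z$ the shifts of $z$ lying in $M$ occur at positions at least $L$ and at most some $L'$ apart. For such a $z$ with marker positions $\ldots < i_{-1} < i_0 < i_1 < \ldots$, I examine each block $z_{[i_k - r, i_{k+1} + r]}$. If it lies in $\B(X)$, set $\tilde f(z)_{[i_k, i_{k+1})} = f(z)_{[i_k, i_{k+1})}$ (meaningful because the $r$-buffer comes from the adjacent block); otherwise, replace it with a canonical word of $\B(Y)$ of the correct length, chosen only as a function of the boundary symbols. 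Right after each marker I reserve an $N$-gap and use mixing of $Y$ to fill it with a connector between the words emitted on the two sides, guaranteeing that the result lies in $\B(Y)$. All local choices depend only on finite windows, so $\tilde f$ is automatically continuous and shift-equivariant.

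The second case is periodic $z \in Z$, which may contain no markers at all. Here the hypothesis $Y \lhd Z$ is used to assign to each periodic orbit of $Z$ a compatible periodic orbit in $Y$ and to map one onto the other shift-equivariantly. Continuity at $z$ is then checked by approximating $z$ with non-periodic configurations in $Z$ and verifying that the two branches agree on longer and longer windows. For the extension property, if $z \in X$ then every examined block lies in $\B(X)$, so the $f$-branch fires throughout and $\tilde f(z) = f(z)$.

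The main technical obstacle is stitching the three ingredients (marker decomposition, $Y$-mixing bridges, and periodic-orbit assignment) into a single map that is simultaneously continuous, shift-equivariant, $Y$-valued, and agrees with $f$ on $X$. The Marker Lemma is the workhorse, while the periodic case is the delicate edge that forces the period-compatibility hypothesis into the statement; the mixing of $Y$ guarantees that canonical filler words can always be glued into a genuine $Y$-configuration.
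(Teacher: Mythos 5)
First, note that the paper does not prove this lemma at all: it is quoted from Boyle's article \cite{Bo83} and used as a black box, so there is no in-paper proof to compare against. Measured against the standard proof, your sketch has the right ingredients (markers, mixing fillers, $Y \lhd Z$ for the periodic points) but two of your steps would fail as written.

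The first is the extension property itself. You reserve an $N$-gap after \emph{every} marker and fill it with a mixing connector. But then for $z \in X$ the output on those gaps is the connector word, not $f(z)$ restricted there, so $\tilde f|_X = f$ fails. The point of the real argument is that transitions via mixing are inserted only where the configuration \emph{stops} looking like $X$ over a long window; on any stretch that is locally in $\B(X)$ the map must output exactly the local rule of $f$, with no reserved gaps, so that a globally $X$-configuration is mapped by $f$ everywhere. (Relatedly, two adjacent blocks each in $\B(X)$ need not have their concatenation in $\B(X)$, so you cannot simply apply $F$ blockwise and expect the images to concatenate in $\B(Y)$; the transition words must live inside the non-$X$ regions.) The second failure is your dichotomy between ``non-periodic $z$, markers at bounded gaps'' and ``periodic $z$, no markers.'' The Marker Lemma gives no upper bound on marker gaps for non-periodic points: a point such as ${}^\infty 0\,1\,0^\infty$ has arbitrarily long periodic stretches and hence arbitrarily long markerless intervals. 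Locally periodic stretches must therefore be handled inside the single local rule (this is exactly where $Y \lhd Z$ enters, to choose a $Y$-periodic word of compatible period from local data), and one must glue into and out of such stretches using mixing. A corollary is that you cannot define $\tilde f$ by global cases and ``check continuity afterwards'': the entire map has to be one local rule of bounded radius from the start, which is what forces all of these choices to depend only on bounded windows.
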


Taking $X = \emptyset$, we obtain the following corollary, since the condition $Z \pergeq Y$ is clearly necessary for the existence of a block map from $Z$ to $Y$. A similar combinatorial characterization for the existence of a block map between two general SFTs can be found in \cite{BaDy07}.

\begin{corollary}
\label{cor:ExistsMap}
If $Y$ is a mixing SFT and $Z$ any subshift, then there exists a block map from $Z$ to $Y$ if and only if $Z \pergeq Y$.
\end{corollary}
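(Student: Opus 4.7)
The proof breaks cleanly into the two implications.

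For the sufficiency direction, the plan is to apply the Extension Lemma with the vacuous choice $X = \emptyset$: the empty function is trivially a block map $f : \emptyset \to Y$, the inclusion $\emptyset \subset Z$ holds, $Y$ is a mixing SFT by hypothesis, and $Y \lhd Z$ is assumed. The lemma then produces an extension $\tilde f : Z \to Y$, which is the desired block map from $Z$ to $Y$.

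For the necessity direction, suppose $g : Z \to Y$ is any block map, and fix a periodic point $z \in Z$ with period $q$. The plan is to set $y := g(z) \in Y$ and verify that $y$ is itself a periodic point of $Y$ whose period divides $q$, so that this $y$ serves as the periodic point of $Y$ demanded by the condition $Y \lhd Z$ for the particular $z$. Since a block map commutes with the shift, one computes $\sigma_Y^q(g(z)) = g(\sigma_Z^q(z)) = g(z)$, so $y$ is fixed by $\sigma_Y^q$ and hence periodic with period dividing $q$. Letting $z$ range over the periodic points of $Z$ gives $Y \lhd Z$.

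There is no serious technical obstacle. Both directions are short: the ``if'' part reduces in one line to the already-assumed Extension Lemma, and the ``only if'' part uses only the shift-commutation property of block maps. The conceptual point to keep straight is the direction of the relation $\lhd$: it is the image $g(z)$ which inherits a dividing period from $z$, so the argument proceeds by applying $g$ forward to each periodic point of $Z$, rather than trying to lift a given periodic point of $Y$ back to $Z$.
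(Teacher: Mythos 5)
Your proof is correct and follows exactly the paper's route: the paper also obtains the corollary by applying the Extension Lemma with $X = \emptyset$ and simply remarks that necessity of $Y \lhd Z$ is clear. Your explicit verification of the necessity direction (a block map sends a $\sigma^q$-fixed point to a $\sigma^q$-fixed point, so periods of images divide periods of preimages) is just the spelled-out version of what the paper leaves implicit.
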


A further corollary (and a special case of the results of \cite{BaDy07}) is that this notion is decidable.

\begin{corollary}
\label{cor:PericDecidable}
Given two mixing SFTs $X$ and $Y$, it is decidable whether $X \pergeq Y$.
\end{corollary}

\begin{proof}
If we have $X \pergeq Y$, then there exists a block map from $X$ to $Y$ by Corollary~\ref{cor:ExistsMap}. On the other hand, if $X \pergeq Y$ does not hold, then there exists a periodic point $x \in X$ of some period $p \in \N$ such that $Y$ has no $d$-periodic points for any divisor $d$ of $p$. Thus $X \pergeq Y$ can be decided by enumerating the block maps from $X$ to $Y$, and the periodic points of $X$ and $Y$.
\end{proof}

Another useful result from \cite{Bo83} is the following.

\begin{lemma}[Lower Entropy Factor Theorem]
Let $X$ and $Y$ be mixing SFTs with $h(X) > h(Y)$. Then there exists a factor map from $X$ onto $Y$ if and only if $X \pergeq Y$.
\end{lemma}

The following result can be extracted from the proof of Lemma 10.1.8 in \cite{LiMa95}.

\begin{lemma}[Marker Lemma]
\label{lem:MarkerLemma}
Let $X \subset S^\Z$ be a shift space, and let $n \geq 1$. Then there exists a block map $h : X \to \{0, 1\}^\Z$ such that
\begin{itemize}
\item the radius of $h$ is at most $n^{|S|^{2n+1}}$,
\item the distance between any two $1$'s in $h(x)$ is at least $n$, and
\item if $h(x)_{(i-n, i+n)} = 0^{2n-1}$, then $x_{[i-n, i+n]}$ is $p$-periodic for some $p < n$.
\end{itemize}
\end{lemma}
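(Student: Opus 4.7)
The plan is to construct $h$ by a standard iterative marker-placement procedure, processing the allowed words of length $2n+1$ one at a time. Enumerate $\mathcal{B}_{2n+1}(X) = \{w_1, \ldots, w_K\}$ with $K \le |S|^{2n+1}$, and inductively define block maps $h_k : X \to \{0,1\}^\Z$ by $h_0 \equiv 0^\Z$ and the rule that $h_k(x)_i = 1$ iff either $h_{k-1}(x)_i = 1$, or else all three of the following hold: $x_{[i-n,i+n]} = w_k$; $h_{k-1}(x)_j = 0$ for every $j$ with $|i-j|<n$; and $x_{[j-n,j+n]} \neq w_k$ for every $j$ with $i-n < j < i$. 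Finally, set $h := h_K$. The last clause is a tie-breaker that forces the leftmost eligible occurrence of $w_k$ in any short window to receive the new marker; without it, two fresh markers at the same stage could end up within distance $n$. Crucially, this clause only inspects $x$ up to $n-1$ positions to the left of $i$, so each $h_k$ remains a genuine block map.

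I would verify the spacing property by induction on $k$: if $h_k(x)_i = h_k(x)_{i'} = 1$ with $0 < i'-i < n$, a four-case analysis on whether each marker is inherited from $h_{k-1}$ or placed anew at stage $k$ yields, in every case, a contradiction with either the induction hypothesis, the $h_{k-1}(x)_j = 0$ clause, or the leftmost-$w_k$ clause. For the third property, suppose $h(x)_j = 0$ for all $j \in (i-n, i+n)$, and let $w_k = x_{[i-n,i+n]}$. Since $h_{k-1} \le h$ pointwise, the $h_{k-1}(x)_j = 0$ clause held at stage $k$, yet no marker was placed at $i$, so the failure must come from some $j \in (i-n, i)$ with $x_{[j-n,j+n]} = w_k$. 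Then $x_{[j-n, j+n]}$ and $x_{[i-n, i+n]}$ both equal $w_k$ and overlap on $2n+1-(i-j)$ positions, which forces $x_{[i-n, i+n]}$ to be $p$-periodic for $p = i - j \in (0, n)$, as required.

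For the radius, a direct check shows that computing $h_k(x)_i$ needs $x$ at positions in $[i-n, i+n]$, at positions within distance $r_{k-1}$ of some $j \in (i-n, i+n)$, and at positions in $[j-n, j+n]$ for $j \in (i-n, i)$, so $r_k \le r_{k-1} + n$ and hence $r_K \le nK \le n|S|^{2n+1}$. This is bounded by $n^{|S|^{2n+1}}$ because $n^m \ge nm$ whenever $n \ge 2$ and $m \ge 1$; the case $n = 1$ is trivial, as the only map satisfying all three conditions is then $1^\Z$, which has radius $0$. The main technical subtlety I expect is in the phrasing of the tie-breaker: any naive greedy procedure is tempted to define $h$ by a global sweep through $\Z$, and the leftmost-$w_k$ clause is designed precisely to replace that sweep by a bounded local condition, so that $h_k$ is a block map of the claimed radius rather than merely a shift-commuting continuous function.
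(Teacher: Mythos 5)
Your proof is correct; the paper itself gives no proof of this lemma, citing instead Lemma 10.1.8 of Lind--Marcus, and your iterative greedy marker placement with a leftmost-occurrence tie-breaker is essentially the construction found there. The only blemish is the radius recursion: the tie-breaker clause already forces $r_1 = 2n-1 > n$, so the correct bound is $r_K \leq (K+1)n - K$ rather than $nK$, but this still sits far below $n^{|S|^{2n+1}}$ and affects nothing.
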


Finally, we make use of the following Garden of Eden Theorem, a proof of which can be found in \cite{CeFiSc04}.

\begin{lemma}[Garden of Eden Theorem]
\label{lem:GoE}
Let $X \subset S^\Z$ be a transitive SFT, and $f : X \to X$ a cellular automaton. Then $f$ is surjective if and only if it is preinjective. In particular, if $f$ is injective, then it is bijective.
\end{lemma}

\section{The Symbolic Categories and their Morphisms}
\label{sec:Morphisms}

In this section, we define the thirteen categories that are the object of study of this paper, and study the categorical properties of their morphisms.

\subsection{The Categories}

\begin{definition}
We define a handful of categories of subshifts and block maps using the naming scheme $\mathfrak{R}n$, where $\mathfrak{R}$ denotes a restriction and $n$ a class of subshifts and block maps. For the properties, K stands for no restrictions, T for transitive subshifts, M for mixing subshifts, and P for mixing subshifts $X$ with a special uniform point $p(X)$, where each morphism $f : X \to Y$ must satisfy $f(p(X)) = p(Y)$. For the classes, 1 stands for all positive entropy SFTs and cellular automata on them (so that all morphisms are endomorphisms)\footnote{The reader may find it strange that we consider a category with multiple objects, even though there are no morphisms between them. Such readers, and others, may find is helpful to think of M1 as a category with just one object, say, a full shift: since category theoretical notions are defined in terms of morphisms, results about M1 would look roughly the same with this definition, just less canonical.}, 2 stands for all SFTs and block maps between them, and 3 for all sofic shifts and block maps between them. For example, M1 is the category of all mixing SFTs of positive entropy, and all cellular automata on them.

Finally, K4 is the category of all subshifts and all block maps between them.
\end{definition}

Between the categories thus defined, we have the following faithful inclusion functors, of which all but those from $\mathfrak{R}1$ to $\mathfrak{R}2$, from P$n$ to M$n$ and from K3 to K4 are also full.

\begin{center}
\begin{tikzpicture}[node distance=.75cm, auto]
  \node (P1) {P1};
  \node (P2) [below of=P1] {P2};
  \node (P3) [below of=P2] {P3};
  \node (U1) [right=1.5cm of P1] {M1};
  \node (U2) [below of=U1] {M2};
  \node (U3) [below of=U2] {M3};
  \node (M1) [right=1.5cm of U1] {T1};
  \node (M2) [below of=M1] {T2};
  \node (M3) [below of=M2] {T3};
  \node (K1) [right=1.5cm of M1] {K1};
  \node (K2) [below of=K1] {K2};
  \node (K3) [below of=K2] {K3};
  \node (K4) [right=1.5cm of K3] {K4};
  
  \draw [->] (P1) to (P2) {};
  \draw [->] (P2) to (P3) {};
  \draw [->] (U1) to (U2) {};
  \draw [->] (U2) to (U3) {};
  \draw [->] (M1) to (M2) {};
  \draw [->] (M2) to (M3) {};
  \draw [->] (K1) to (K2) {};
  \draw [->] (K2) to (K3) {};
  
  \draw [->] (P1) to (U1) {};
  \draw [->] (U1) to (M1) {};
  \draw [->] (M1) to (K1) {};
  \draw [->] (P2) to (U2) {};
  \draw [->] (U2) to (M2) {};
  \draw [->] (M2) to (K2) {};
  \draw [->] (P3) to (U3) {};
  \draw [->] (U3) to (M3) {};
  \draw [->] (M3) to (K3) {};

  \draw [->] (K3) to (K4) {};
\end{tikzpicture}
\end{center}

This choice of categories is motivated as follows. First, the category K4 contains all one-dimensional symbolic dynamics. The standard references \cite{LiMa95,Ki98} of symbolic dynamics take place mostly in K3, and this category is closed under all the standard operations (images, products, unions etc), and thus we mostly restrict our attention to these subshifts. The category K2 is as important as K3, and much easier to analyze. The transitive categories T(2/3) lack certain `pathological' objects, so many authors work exclusively on them, and much of their theory is known. For example, the existence of a factor map between two transitive SFTs, one of which has strictly more entropy than the other, has been given a simple characterization in \cite{Bo83}. The analogous result for the sofic case is claimed in \cite{Kr11}, although the condition is much more complicated. The mixing categories M(2/3) are somewhat similar to T(2/3), but there are certain key differences such as the Extension Lemma.

The main motivation for the endomorphism categories (K/T/M/P)1 are cellular automata, which in our formalism are endomorphisms of the full shift objects $S^\Z$ for all alphabets $S$. The pointed categories P(1/2/3) are generalizations of full shifts, and the addition of special uniform points is motivated by certain categorical constructions that it enables, and the fact that the existence of a fixed uniform point (a \emph{quiescent state}) is often assumed in the study of cellular automata.

An a posteriori motivation for having this many categories is that we find many subtle differences between them. For example, only the categories M1 and T1, as well as K3 and K4, have identical columns in Table~\ref{tab:Morphisms}, which summarizes the characterizations of different types of epic and monic morphisms.

We now begin to classify the morphisms of the symbolic categories. The characterizations we obtain are summarized in Table~\ref{tab:Morphisms}.

\begin{table}[htp]
\begin{center}
\caption{Known exact characterizations of properties of morphisms of the symbolic categories, and the legend for the abbreviations. See Definition~\ref{def:SPP} and Definition~\ref{def:PericQInj} for the nonstandard notions. The category of sets (assuming the axiom of choice) is given as a point of reference.}
\label{tab:Morphisms}

\vspace{0.5cm}

\begin{tabular}{|c|c|c|c|c|c|c|c|}
\hline
              & K4    & K3    & K2   & K1  & T3                     & T2  & T1  \\
\hline
epic          & sur   & sur   & sur  & sur & sur                    & sur & sur \\
split epic    &       &       & spp  & spp &                        & spp & bij \\
regular epic  & sur   & sur   & sur  &     &                        &     &     \\
monic         & inj   & inj   & inj  &     & ipp                    & inj &     \\
split monic   &       &       &      &     &                        &     & bij \\
regular monic & inj+s & inj+s & inj  &     & P~\ref{prop:RegMonics} & inj & bij \\
\hline
\hline
              & \textbf{Set} & M3                     & M2                    & M1  & P3                     & P2  & P1 \\
\hline
epic          & sur          & sur                    & sur                   & sur & sur                    & sur & sur \\
split epic    & sur          &                        & spp                   & bij &                        & spp & bij \\
regular epic  & sur          &                        &                       &     &                        &     & bij \\
monic         & inj          & P~\ref{prop:MonicsM3}  & P~\ref{prop:MonicsM2} &     &                        & pre & pre \\
split monic   & inj          &                        & inj+p                 & bij &                        & inj & bij \\
regular monic & inj          & P~\ref{prop:RegMonics} & inj                   & bij & P~\ref{prop:RegMonics} & inj & bij \\
\hline
\end{tabular}

\vspace{0.5cm}

\begin{tabular}{|c|c|c|c|}
\hline
sur & inj & bij & pre \\
\hline
surjective & injective & bijective & preinjective \\
\hline
\hline
ipp & s & p & spp \\
\hline
\parbox{2.35cm}{injective on periodic points} & subSFT image & peric & \parbox{2.35cm}{strong periodic point condition} \\
\hline
\end{tabular}
\end{center}
\end{table}

It is well known that the isomorphisms are always exactly the bijective block maps, but the categorical notions of injectivity and surjectivity are more subtle. The rest of this section is dedicated to characterizations of different flavors of epimorphisms and monomorphisms in the symbolic categories.

\subsection{Epimorphisms and Monomorphisms}
\label{sec:EpicMonic}

We begin with a study of epic and monic morphisms. The epic case is the simplest one, and introduces the reader to some of the basic arguments that we use in this paper. It is also essentially the same for all the categories. By contrast, the different classes of monomorphisms are the most varied, and this case shows some of the complications that can occur when one applied abstract categorical notions to concrete examples.

For morphisms of all concrete categories, split epic implies surjective implies epic, and split monic implies injective implies monic. The converses do not hold in general, but for epimorphisms we have the following.

\begin{proposition}
\label{prop:SurIsEpic}
In (K/T/M/P)(2/3) and K4, surjectivity is equal to epicness.
\end{proposition}

\begin{proof}
We only need to show non-surjective implies non-epic. So let $f : X \to Y$ be non-surjective. We need to show $f$ is not right-cancellative. Since $f(X) \subsetneq Y$, there is a word $w \in \B(Y)$ such that $w \notin \B(f(X))$. Let $g_0 : Y \to \{0, 1\}^\Z$ be the all-$0$ map, and let $g_1$ be the one induced by the characteristic function of $w$. Then $g_0 \circ f = (x \mapsto 0^\Z) = g_1 \circ f$, but $g_0 \neq g_1$, so $f$ is not epic.
\end{proof}

The following is proved analogously to Proposition~\ref{prop:SurIsEpic}, but some more technicalities are needed, since the only maps available are endomorphisms.

\begin{proposition}
\label{prop:SurIsEpicEndo}
In (K/T/M/P)1, surjectivity is equal to epicness.
\end{proposition}

\begin{proof}
Again, we only need to show that if $f : X \to X$ is not surjective, then it is not epic. For that, let $w \in \B(X)$ be such that $w \notin \B(f(X))$.

There are two cases, the first of which being that every extension of $w$ into a point of $X$ is periodic. This means that $Y = \{ x \in X \;|\; w \sqsubset x \}$ is a finite set of periodic points, in particular an SFT, $Z = X \setminus Y$ is also an SFT, and $f(X) \subset Z$. Define the block map $g : X \to X$ by $g|_Y = f|_Y$ and $g|_Z = \ID_Z$, so that $g(p(X)) = p(X)$ in the pointed case. Since $f(Y) \subset Z$, we have $g \neq \ID_X$, but $g \circ f = f$, so $f$ is not epic.

Suppose then that $w$ can be extended to a nonperiodic point of $X$. Then $\INF u w v \INF \in X$ is not periodic for some $u, v \in \B_n(X)$ and $n \in \N$. Let now $g : X \to X$ be the block map that behaves as the identity except for mapping the words $u^m w v^{m+1}$ to $u^{m+1} w v^m$ for some $m \in \N$ larger than the window size of $X$. Then we again have $g \circ f = f$, but clearly $g \neq \ID_X$.
\end{proof}

The next result is classical, but we include it for completeness.

\begin{lemma}
\label{lem:PeriodInj}
If a morphism $f : X \to Y$ of (T/M/P)2 is injective on periodic points, then it is injective.
\end{lemma}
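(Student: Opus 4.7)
The plan is to prove the contrapositive: if $f:X\to Y$ is not injective, then it is not injective on periodic points. Let $K=\Ker{f}\subset X\times X$, which is a subSFT containing the diagonal $\Delta_X$, and pick any $(x,y)\in K$ with $x\neq y$ witnessing non-injectivity. The goal is to extract a periodic $(p,q)\in K$ with $p\neq q$, which immediately yields distinct periodic points in $X$ with the same $f$-image. Since $X$ is transitive, $\Delta_X$ is transitive and is therefore contained in a unique constituent $T_0$ of $K$ (distinct constituents of an SFT being pairwise disjoint). I would then split on whether $T_0$ is strictly larger than $\Delta_X$ or not.

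The easy case is $T_0\supsetneq\Delta_X$: here $T_0$ is a transitive subSFT of $K$ strictly larger than $\Delta_X$, so $T_0\setminus\Delta_X$ is a nonempty open subset, and since periodic points are dense in the transitive SFT $T_0$ this open set contains a periodic point of $K$ off the diagonal. Otherwise $\Delta_X=T_0$ is itself a constituent of $K$, and Lemma~\ref{lem:KernelComponent} applies to give that $f$ is preinjective. In that case $x$ cannot be asymptotic to $y$, so after shifting we may assume $x_i\neq y_i$ for infinitely many $i>0$. In any SFT, a one-sided infinite walk in the underlying graph eventually enters and stays inside a single strongly connected component, so there exist $N\in\N$ and a constituent $T_+$ of $K$ such that $(x,y)|_{[N,\infty)}$ extends to a bi-infinite point $(x',y')\in T_+$ agreeing with $(x,y)$ from position $N$ onward. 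This $(x',y')$ has infinitely many off-diagonal coordinates, so $T_+\not\subset\Delta_X$, and density of periodic points in the transitive SFT $T_+$ produces a periodic $(p,q)\in T_+\setminus\Delta_X$, contradicting the hypothesis.

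The main conceptual input is Lemma~\ref{lem:KernelComponent}, which handles the scenario where an off-diagonal point of $K$ might be a purely transient heteroclinic connection between two diagonal constituents; preinjectivity forces $x$ and $y$ to disagree on an infinite tail, so that tail has to settle into a constituent that cannot lie in $\Delta_X$. Everything else reduces to standard SFT facts: the decomposition of an SFT into finitely many pairwise disjoint transitive components, density of periodic points inside each component, and eventual confinement of one-sided tails to a single component. The same argument works uniformly in T2, M2 and P2, since all of them have transitive SFTs as objects and block maps as morphisms.
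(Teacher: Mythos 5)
Your proof is correct, but it takes a genuinely different route from the paper's. The paper argues directly with words: it splits on whether the two preimages $x \neq y$ agree on arbitrarily long blocks on both sides of a disagreement. If so, it cuts out the finite "diamond" and closes it into a pair of distinct periodic points using transitivity of $X$ to find a gluing word $w$ with ${}^\infty(uw)^\infty, {}^\infty(vw)^\infty \in X$; if not, disagreements recur with bounded gaps on one side, and a pumping/pigeonhole argument on a long window produces two distinct periodic points with the same image. You instead work structurally inside the kernel SFT $\Ker{f} \subset X^2$: you invoke the constituent decomposition, Lemma~\ref{lem:KernelComponent} to dispose of the case where $\Delta_X$ is itself a constituent (forcing the witnesses to disagree infinitely often), density of periodic points in irreducible SFTs, and eventual confinement of a one-sided walk to a single strongly connected component. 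Both arguments ultimately locate an off-diagonal periodic point of $\Ker{f}$. What the paper's version buys is self-containedness and an explicit view of where transitivity of $X$ is used (to close up the diamond); what yours buys is brevity modulo standard SFT facts, an explicit link to preinjectivity via Lemma~\ref{lem:KernelComponent}, and arguably a cleaner rendering of the paper's rather terse second case (the claim there that suitable windows $u$ of $x$ satisfy ${}^\infty u^\infty \in X$ is exactly your SCC-plus-pumping argument in disguise). One cosmetic remark: the subshift of bi-infinite walks in the strongly connected component your tail settles into is a transitive subSFT of $\Ker{f}$ but need not itself be a constituent; this does not matter, since density of periodic points in that transitive subSFT is all you use.
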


\begin{proof}
Suppose that $f$ is not injective, so that we have $f(x) = f(y)$ for some $x, y \in X$ with $x_0 \neq y_0$. Now, suppose that there exist $i < 0$, $j > 0$ and $k \in \N$ with $k$ arbitrarily large such that $x_{[i-k,i]} = y_{[i-k,i]}$ and $x_{[j,j+k]} = y_{[j,j+k]}$. Let $k$ be larger than the window size of $X$ and the radius of $f$, and let $u = x_{[i-k,j+k]}$ and $v = y_{[i-k,j+k]}$. Since $X$ is transitive, there exists $w \in \B(X)$ such that $\INF (uw) \INF, \INF (vw) \INF \in X$. These points are distinct and periodic, but have the same $f$-image, and hence $f$ is not injective on periodic points.

On the other hand, if there is a bound for such $k$, then for arbitrarily large $n \in \N$, for arbitrarily large or small $i \in \Z$, the words $u = x_{[i,i+n]}$ and $v = y_{[i,i+n]}$ are such that $\INF u \INF, \INF v \INF \in X$. For either positive or negative $i$ and large enough $n$, these words are also distinct. Since $f(\INF u \INF) = f(\INF v \INF)$, we are done.
\end{proof}

This is what we know about monomorphisms.

\begin{proposition}
\label{prop:Monicness}
Let $f : X \to Y$ be a morphism in any of the categories.
\begin{itemize}
\item In (K/T/M/P)(1/2), if $f$ is monic, then it is preinjective. The converse holds in P(1/2/3).
\item If $f$ is injective, then it is monic. The converse holds in K(2/3/4) and T2.
\item In (T/M/P)3, if $f$ is injective on periodic points, then it is monic. The converse holds in T3.
\item In M(1/2/3), if $f$ is monic, then it is injective on uniform points.
\end{itemize}
\end{proposition}

\begin{proof}
Suppose first that $f$ is not preinjective in (K/T/M/P)(1/2), so that there exist $n \in \N$ larger than the window size of $X$ and words $u, v, v', w \in \B_n(X)$ such that $v \neq v'$ and $f(\INF u v w \INF) = f(\INF u v' w \INF)$. We may assume that $\INF u v w \INF$ is not periodic. Then the block map $g : X \to X$ that maps $u v w$ to $u v' w$ if no other $u v w$ overlaps it, and otherwise acts as the identity, is well-defined and nontrivial. Then $f \circ g = f \circ \ID_X$, and $f$ is not monic.

Suppose $f$ is preinjective in P(1/2/3), let $g \neq h : Z \to X$, and take $z \in Z$ with $g(z) \neq h(z)$. We may now assume that $z$ is asymptotic to $p(X)$, and then the points $g(z)$ and $h(z)$ are asymptotic. Since $f$ is preinjective, we have $f(g(z)) \neq f(h(z))$, and thus $f$ is monic.

It is clear that an injective morphism is always monic. Conversely, suppose $f$ is not injective in K(2/3/4), so that the kernel set $\Ker{f} \subset X \times X$ is strictly larger than the diagonal $\Delta_X$. This implies that the projection maps $p_1, p_2 : \Ker{f} \to X$ are distinct, but $f \circ p_1 = f \circ p_2$ by definition. Since $\Ker{f}$ is also an object of K(2/3/4), $f$ is not monic.

Then suppose $f$ is noninjective in T2, thus not injective on periodic points by Lemma~\ref{lem:PeriodInj}. Then $f$ equates two distinct periodic points $x, y \in X$ of period $n \in \N$. Let $Z$ be the orbit of $z = \INF (0^{n-1} 1) \INF$, and define $g, h : Z \to X$ by $g(z) = x$ and $h(z) = y$. Then $g \neq h$ but $f \circ g = f \circ h$, and $f$ is not monic.

Suppose next that $f$ is injective on periodic points in (T/M/P)3, and let $g \neq h : Z \to X$ be morphisms. Since $Z$ is transitive, there exists a periodic point $z \in Z$ such that $g(z) \neq h(z)$, but then $f(g(z)) \neq f(h(z))$, since the images of $z$ are periodic. As in the case of T2, we conversely see that if $f$ is not injective on periodic points, it is not monic in T3.

Suppose finally that the category is M(1/2/3), and suppose that $f$ is not injective on uniform points of $X$, so that $f(\INF a \INF) = f(\INF b \INF)$ for some $a \neq b \in B_1(X)$. Let $g_a$ and $g_b$ be the CA from $X$ to itself that send everything to $\INF a \INF$ and $\INF b \INF$, respectively. Then we have $f \circ g_a = f \circ g_b$, and $f$ is not monic.
\end{proof}

In T3, we show that Lemma~\ref{lem:PeriodInj} does not hold, and monicness does not imply preinjectivity.

\begin{example}
Let $X = \B^{-1}((0^+21^+2 + 0^+31^+3)^*)$, which is a mixing sofic shift with uniform points ($\INF 0 \INF$ and $\INF 1 \INF$), and let $f : X \to \{0,1,2,3\}^\Z$ be the block map that behaves as the identity except for sending each word $021$ and $031$ to $001$. Then $f$ is clearly not preinjective, but it is injective on periodic points, thus monic in (T/M/P)3.
\end{example}

The exact characterization of monomorphisms of M(2/3) has proved difficult. However, we obtain the following technical characterizations in terms of the kernel sets of block maps, which at least allows us to easily decide monicness.

\begin{proposition}
\label{prop:MonicsM3}
Let $f : X \to Y$ be a morphism of M3. Then $f$ is not monic if and only if $\Ker{f} \subset X^2$ has a mixing sofic subshift not contained in $\Delta_X$.
\end{proposition}

\begin{proof}
Suppose first that $f$ is not monic, so that there exist morphisms $g \neq h : Z \to X$ with $f \circ g = f \circ h$. Then the image of the block map $z \mapsto (g(z), h(z)) \in \Ker{f}$ is a mixing sofic subshift of $\Ker{f}$. Since $g \neq h$, this image is not contained in $\Delta_X$.

Suppose then that there exists such a subshift $Z \subset \Ker{f}$, which is then an object of M3. The restrictions of the projection maps $p_1, p_2 : \Ker{f} \to X$ to $Z$ now satisfy $f \circ p_1 = f \circ p_2$, and since $Z$ is not contained in $\Delta_X$, we also have $p_1 \neq p_2$.
\end{proof}

Note that we cannot replace the `mixing sofic subshift' above with `mixing component', since all transitive components of $\Ker{f}$ containing it might be nonmixing. The case of M2, however, is different.

\begin{lemma}
\label{lem:MixingInComponent}
A transitive SFT which contains a mixing subshift is itself mixing.
\end{lemma}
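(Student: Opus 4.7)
The plan is to argue by contradiction via the cyclic decomposition of transitive SFTs. Suppose $X$ is a transitive SFT and let $p \geq 1$ be its period. It is standard (see e.g.\ Theorem~4.5.8 of \cite{LiMa95}) that $X$ is mixing iff $p = 1$, and that in general there is a block map factor $\pi : X \to C_p$ onto the single orbit $C_p$ of period $p$, arising from the decomposition $X = X_0 \sqcup \cdots \sqcup X_{p-1}$ with $\sigma(X_i) = X_{i+1 \bmod p}$. My goal is to rule out $p \geq 2$.

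Assume for contradiction $p \geq 2$, and let $Y \subseteq X$ be a nonempty mixing subshift. Let $r$ be a radius of $\pi$ and $\pi_{\mathrm{loc}} : \B_{2r+1}(X) \to \Z/p\Z$ its local rule, so that $\pi(x)_i = \pi_{\mathrm{loc}}(x_{[i-r, i+r]})$ for every $x \in X$. I would choose any $u, v \in \B(Y)$ with $|u|, |v| \geq 2r+1$, which exist since $Y$ is nonempty. By mixing of $Y$, for all sufficiently large $n$ there is some $w \in \B_n(Y)$ with $uwv \in \B(Y) \subseteq \B(X)$. Placing $uwv$ at position $0$ inside any $x \in Y$ realizing it, and using that $\pi(x) \in C_p$ advances by $1$ modulo $p$ at each step, one gets
\[
\pi_{\mathrm{loc}}(v_{[0,2r]}) \equiv \pi_{\mathrm{loc}}(u_{[0,2r]}) + |u| + n \pmod{p},
\]
which pins $n$ down to a single residue class modulo $p$. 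Since $p \geq 2$ leaves out other residues, this contradicts the fact that all sufficiently large $n$ are valid, so $p = 1$ and $X$ is mixing.

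The main obstacle is nothing more than correctly invoking the cyclic decomposition of a transitive SFT and the associated factor map $\pi : X \to C_p$; once those are in hand the argument reduces to the one-line congruence above. A minor subtlety is that the statement tacitly requires $Y \neq \emptyset$ (the empty subshift being vacuously mixing under the definition in Section~\ref{sec:Defs}), matching the usual convention and justified by the fact that only nonempty mixing subshifts will ever be invoked later.
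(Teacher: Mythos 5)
The paper states Lemma~\ref{lem:MixingInComponent} without proof, treating it as a standard fact, so there is no internal argument to compare against. Your proof via the cyclic decomposition of a transitive SFT and the factor map $\pi : X \to C_p$ is correct and is the standard way to establish this: the congruence $\pi_{\mathrm{loc}}(v_{[0,2r]}) \equiv \pi_{\mathrm{loc}}(u_{[0,2r]}) + |u| + n \pmod{p}$ does pin $n$ to one residue class, which is incompatible with the ``all sufficiently large $n$'' clause in the paper's definition of mixing once $p \geq 2$. Your remark about the nonemptiness of $Y$ is also apt, since the empty subshift is vacuously mixing under the definitions of Section~\ref{sec:Defs}; the only quibble is the citation (the period/mixing equivalence is Proposition~4.5.10{\,}ff.\ rather than Theorem~4.5.8 of \cite{LiMa95}), which does not affect correctness.
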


\begin{proposition}
\label{prop:MonicsM2}
Let $f : X \to Y$ be a morphism of M2. Then $f$ is not monic if and only if $\Ker{f} \subset X^2$ has a mixing component not equal to $\Delta_X$.
\end{proposition}

\begin{proof}
Suppose that $f$ is not monic. As in the previous proof, we see that $\Ker{f}$ has a mixing sofic subshift $Z$ not contained in $\Delta_X$. Let $m \in \N$ and $S$ be the window size and alphabet of $\Ker{f}$, and let $Z' \subset S^\Z$ be the SFT defined by the forbidden words $S^m \setminus \B_m(Z)$. Then $Z \subset Z' \subset \Ker{f}$, and by the transitivity of $Z$, for all $u, v \in \B_m(Z') = \B_m(Z)$ there exists a word $w \in \B(Z)$ such that $u w v \in \B(Z) \subset \B(Z')$. Since $m$ is a window size for $Z'$, this implies that $Z'$ is transitive. Then $Z'$ is contained in some transitive component $C$ of $\Ker{f}$, and $C$ is mixing by Lemma~\ref{lem:MixingInComponent}. Finally, since $Z \subset C$, we see that $C \neq \Delta_X$.

The converse case is proved exactly as above.
\end{proof}

By Proposition~\ref{prop:Monicness}, the monomorphisms of M2 are preinjective and injective on uniform points. The following example, which uses the above result, shows that neither this nor injectivity is a characterization.

\begin{example}
\label{ex:XORMonicness}
Let $X = \{0,1\}^\Z$, and let $f : X \to X$ be the three-neighbor XOR cellular automaton, defined by the local function $F(a, b, c) = a + b + c \bmod 2$. We show that $f$ is monic in M(2/3), even though it is not injective. Namely, one easily sees that $\Ker{f} = \{ (x, x+y) \;|\; x \in X, y \in Y \}$, where $Y = \{ \INF 0 \INF, \INF (011) \INF, \INF (101) \INF, \INF (110) \INF \}$, and the sums are taken cellwise. Then $\Ker{f}$ consists of two disjoint transitive components, of which exactly $\Delta_X$ is mixing. In the other transitive component, every point has period of $3$, and thus this component contains no mixing subshifts.

Let then $Z \subset \{0,1\}^\Z$ be defined by the forbidden words $\{ 000, 111 \}$, and let $g : Z \to \{0,1\}^\Z$ be the two-neighbor XOR automaton, with the local function $G(a, b) = a + b \bmod 2$. We show that $g$ is not monic in M(2/3), even though it is preinjective, and (vacuously) injective on unary points. Namely, $\Ker{g}$ consists of two transitive components, $\Delta_X$ and $\{ (x,x+\INF 1 \INF) \;|\; x \in X \}$, both of which are mixing.
\end{example}

\subsection{Split Epicness}
\label{sec:SplitEpi}

In this subsection, we show that split epicness is decidable in the category K3, and thus in the subcategories (M/T/K)(2/3), since the inclusion functors are full and faithful. An easy argument extends this result to all categories except K4, where decidability questions make little sense. In K2 and its subcategories, we give a concrete characterization in terms of periodic points. The case of (T/M/P)1 is rather trivial, see Proposition~\ref{prop:1Split} in the next section.

Recall that a morphism $f : X \to Y$ is split epic if and only if it has a section, that is, a morphism $g : Y \to X$ such that $f \circ g = \ID_Y$. For block maps, this is equivalent to the existence of a subshift $Z \subset X$ (the image of $g$) such that $f|_Z : Z \to Y$ is a conjugacy. Split epicness is a stronger version of the condition of having a \emph{cross section}, that is, a continuous (but not necessarily shift-commuting) map $h : Y \to X$ such that $f \circ h = \ID_X$. These notions are distinct: on the full shift, exactly the open maps have cross-sections \cite{He69}.

We begin the proof with a Ramsey theoretical lemma.

\begin{definition}
Let $k, p \in \N$. We write $r(k, p)$ for the least number $N \in \N$ such that if the edges of the size-$N$ complete graph $K_N$ are colored with $k$ colors, then there exists a monochromatic induced subgraph $G \subset K_N$ of size $p$.
\end{definition}

The fact that these numbers exist is (a special case of) the well-known Ramsey's theorem, which can be found in most standard references of combinatorics, including \cite{Ne95}.

\begin{lemma}
\label{lem:SemigroupRamsey}
Let $M$ be a finite monoid. Then there exists $k = q(M) \in \N$ such that for any $(a_1, \ldots, a_k) \in M^k$, there exist indices $i_1, i_2$ such that $a = a_{i_1} \cdots a_{i_2-1}$ is an idempotent, that is, $a = a^2$.
\end{lemma}

\begin{proof}
If $k \geq r(|M|, 3) - 1$, we can apply Ramsey's theorem to the complete graph with vertex set $[1, k+1]$ and edge coloring $\{i, j\} \mapsto a_i \cdots a_{j-1} \in M$ to obtain three distinct elements $\{i_1, i_2, i_3\}$ such that $a_{i_1} \cdots a_{i_2-1} = a_{i_2} \cdots a_{i_3-1} = a_{i_1} \cdots a_{i_3-1}$. Then
\[ a = a_{i_1} \cdots a_{i_2-1} = a_{i_1} \cdots a_{i_2-1} \cdot a_{i_2} \cdots a_{i_3-1} = a^2, \]
and we are done.
\end{proof}

Next, we prove a technical version of the Marker Lemma that involves the syntactic monoid of a sofic shift and an auxiliary monoid homomorphism.

\begin{definition}
Let $X \subset S^\Z$ be a sofic shift and $H : S^* \to M$ a homomorphism to a finite monoid $M$. If $w \in \B(X)$ is such that $(w^t)_X = (w)_X$ and $H(w^t) = H(w)$ for all $t \geq 1$, we say $w$ is \emph{pumpable for $H$}. For a monoid $M$, define $2^M$ as the monoid whose elements are subsets of $M$ and multiplication is defined by $A \cdot B = \{a \cdot b \;|\; a \in A, b \in B\}$.
\end{definition}

\begin{lemma}
\label{lem:RamseyPeriodMarker}
Let $X \subset S^\Z$ be a sofic shift and $H : S^* \to M$ a homomorphism to a finite monoid $M$, and define $K = q(\Syn(X) \times M)$. Then there exists a block map $h : X \to \{0, 1\}^\Z$ with radius at most $3K^2 + K^{|S|^{2K+1}}$ such that the following properties hold for all $x \in X$, $n \in \N$ and $k = 2K^2 + 1$:
\begin{itemize}
\item if $h(x)_{[0,n-1]} = 1^n$, then $n \leq k$,
\item if $h(x)_{[0,n+1]} = 01^n0$, then $w = x_{[1,n]}$ is pumpable for $H$,
\item if $h(x)_{[0,2k-1]} = 0^{2k}$, then $x_{[0,2k-1]}$ is periodic with period $p \leq k$,
\item if $h(x)_{[0,n]} = 01^\ell0^{2k}$ ($h(x)_{[0,n]} = 0^{2k}1^\ell0$), then $x_{[1,n]}$ ($x_{[0,n-1]}$, respectively) is periodic with period $p \leq k$, and $p$ divides $\ell$.
\end{itemize}
\end{lemma}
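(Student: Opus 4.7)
The plan is a two-stage construction: first extract an auxiliary marker map from the Marker Lemma, then refine it using the Ramsey-type Lemma~\ref{lem:SemigroupRamsey} on $\Syn(X) \times M$. I would first apply Lemma~\ref{lem:MarkerLemma} with parameter $K$ to obtain $h_0 : X \to \{0,1\}^\Z$ of radius at most $K^{|S|^{2K+1}}$, whose $1$-markers are pairwise at distance $\geq K$ and whose zero runs $h_0(x)_{(i-K,i+K)} = 0^{2K-1}$ certify $p$-periodicity of $x_{[i-K,i+K]}$ for some $p < K$. Contrapositively, in any aperiodic window of $x$ the adjacent markers in $h_0$ are at distance at most $2K-1$, so every subword of $h_0(x)$ of length roughly $2K^2$ that lies in aperiodic content contains at least $K+1$ consecutive markers.

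To define $h$, for each position $i$ I would examine $h_0(x)$ in the window $[i-2K^2, i+2K^2]$. When $K+1$ consecutive markers $m_0 < m_1 < \cdots < m_K$ are present (chosen by a canonical rule making the assignment translation-invariant, e.g.\ the first such block whose span contains $i$), I form the $K$ elements $a_j = ((x_{[m_{j-1}, m_j - 1]})_X,\, H(x_{[m_{j-1}, m_j - 1]})) \in \Syn(X) \times M$ for $j = 1, \ldots, K$ and invoke Lemma~\ref{lem:SemigroupRamsey} with $K = q(\Syn(X) \times M)$ to find an idempotent subproduct $a_{i_1} \cdots a_{i_2 - 1}$ (say with the lexicographically smallest pair $(i_1, i_2)$). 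Idempotence in $\Syn(X) \times M$ unpacks to $(w^t)_X = (w)_X$ and $H(w^t) = H(w)$ for all $t \geq 1$, where $w = x_{[m_{i_1 - 1}, m_{i_2 - 1} - 1]}$, so $w$ is pumpable for $H$. I set $h(x)_i = 1$ exactly when $i$ lies in the canonical pumpable span thus identified, and $0$ otherwise.

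Properties~1--3 should then be quick. The selected pumpable factor spans at most $K(2K-1) < k$ positions, giving Property~1. By construction an occurrence $01^n 0$ in $h(x)$ brackets precisely one canonical pumpable factor $x_{[1,n]}$, giving Property~2. For Property~3, a run $h(x)_{[0, 2k-1]} = 0^{2k}$ means no canonical pumpable factor is selected anywhere in that region, so the corresponding part of $h_0(x)$ cannot contain $K+1$ consecutive markers within any sub-window of length $2K^2$; consequently some sub-window carries a zero run of length $2K-1$ in $h_0$, and the Marker Lemma forces $p$-periodicity of the local $x$-content with $p < K \leq k$.

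The hard part will be Property~4, the alignment condition $p \mid \ell$ between a pumpable $1^\ell$-run and the period of an adjacent $0^{2k}$-run. To secure it I would refine the canonical choice: whenever a pumpable factor $w$ abuts a $p$-periodic region certified by $h_0$, snap the corresponding endpoint to the nearest $p$-aligned position in that region. Since the adjacent content is literally $p$-periodic, the extended factor still corresponds to a pumpable word in $\Syn(X) \times M$ (idempotents are stable under multiplication by period powers of a $p$-periodic block), and the snap forces $p \mid \ell$. The $K^{|S|^{2K+1}}$ radius for reading $h_0$, the $2K^2$ slack for locating $K+1$ consecutive markers, and an extra $K^2$ for this alignment adjustment together give the claimed radius bound $3K^2 + K^{|S|^{2K+1}}$.
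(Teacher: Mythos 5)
Your overall architecture (Marker Lemma at scale $K$, then Lemma~\ref{lem:SemigroupRamsey} applied to $\Syn(X)\times M$ to locate pumpable words in marker-dense regions) matches the paper's, but your treatment of Property~4 has a genuine gap. The property requires that when a $1^\ell$-block abuts a $0^{2k}$-block, the \emph{entire} word $x_{[1,n]}$ --- including the content under the $1^\ell$ --- is $p$-periodic with $p \mid \ell$. Your construction selects the pumpable word inside the aperiodic, marker-dense region and then ``snaps'' an endpoint toward the adjacent periodic region. This fails on two counts. First, the content under the $1$s would then be aperiodic, so $x_{[1,n]}$ is not $p$-periodic. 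Second, the algebraic claim you lean on --- that an idempotent remains idempotent after multiplication by (a power of the period word of) the neighbouring periodic block --- is false: if $e$ is idempotent and $a$ is arbitrary, $(ea)^2 = eaea$ need not equal $ea$. The paper instead handles the boundary between a marker of $h_K$ and a long zero-run as a separate case: there $x_{[1,3K^2]} = u^m v$ is $p$-periodic with period word $u$, and one applies the Ramsey lemma to the constant sequence of blocks $((u)_X, H(u))$, obtaining a pumpable word of the form $u^j = x_{[ip,(i+j)p]}$ with $j \leq K$. Its length $jp$ is automatically a multiple of $p$ and it lies inside the periodic region, which is what makes Property~4 hold.

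A secondary problem is your per-position selection rule for Property~2. Declaring $h(x)_i = 1$ when $i$ lies in ``the first block of $K+1$ consecutive markers whose span contains $i$'' lets neighbouring positions $i$ and $i+1$ select \emph{different} blocks and hence different pumpable spans; the union of two overlapping pumpable spans is a maximal run of $1$s that need not itself be a pumpable word, breaking the claim that $01^n0$ brackets a single pumpable factor. The paper sidesteps this by anchoring the definition of $h$ to the markers of $h_K$: the whole gap between two adjacent markers (when short) is assigned its pattern $0^i 1^j 0^{r+\ell+1-i-j}$ in one step, with $i,j$ chosen minimally, so all positions in the gap agree on which subword is marked. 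You would need a similarly marker-anchored (rather than position-anchored) rule to make Property~2 go through.
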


\begin{proof}
First, let $h_K : X \to \{0,1\}^\Z$ be given by the Marker Lemma for the constant $K$. Now, define $h$ as follows for a configuration $x \in X$.

If $h_K(x)_{[-\ell,r+1]} = 10^{r+\ell}1$ for some $r, \ell \geq 0$ such that $r + \ell < 3K^2$, then $r + \ell \geq K$. By the definition of $K$, there then exist $i, j \in \N$ with $j > 0$ and $i + j \leq \ell + r$ such that $x_{[-\ell + i,-\ell + i + j]}$ is pumpable for $H$. We choose the minimal such $i$ and $j$, and define $h(x)_{[-\ell,r]} = 0^i 1^j 0^{r+\ell+1-i-j}$.

If $h_K(x)_{[0,3K^2]} = 10^{3K^2}$, then $w = x_{[1,3K^2]}$ is periodic with some period $p \leq K$, so that $w = u^m v$ for some $m \geq 3K$ and $u, v \in \B(X)$ with $|u| = p$ and $|v| < p$. Now, there exist $i, j \in \N$ with $j > 0$ and $i + j \leq K$ such that $x_{[ip, (i + j)p]}$ is pumpable for $H$. We again choose the minimal such $i$ and $j$, and define $h(x)_{[0,2K^2]} = 0^{1+ip} 1^{jp} 0^{2K^2-(i+j)p-1}$. The case for $h_K(x)_{[0,3K^2]} = 0^{3K^2}1$ is handled symmetrically.

Finally, if $h_K(x)_{[-K^2,K^2]} = 0^{2K^2+1}$, then by the definition of $h_K$, $x_{[-K^2,K^2]}$ is periodic with period $p \leq K$. In this case, we define $h(x)_{[-K^2,K^2]} = 0^{2K^2+1}$. We have now defined the block map $h$ completely, and the desired properties follow.
\end{proof}

\begin{definition}
\label{def:SPP}
Let $f : X \to Y$ be a block map, and define
\[ \mathcal{P}_p(Y) = \{ u \in \B(Y) \;|\; \INF u \INF \in Y, |u| \leq p \}. \]
We say $f$ satisfies the \emph{strong $p$-periodic point condition} if there exists a length-preserving function $G : \mathcal{P}_p(Y) \to \B(X)$ such that for all $u, v \in \mathcal{P}_p(Y)$ and $w \in \B(Y)$ with $\INF u .w v \INF \in Y$, there exists an $f$-preimage for $\INF u .w v \INF$ of the form
$\INF G(u) w' . w'' w''' G(v) \INF \in X$
where $|u|$ divides $|w'|$, $|v|$ divides $|w'''|$ and $|w| = |w''|$. The \emph{strong periodic point condition} is that the strong $p$-periodic point condition holds for all $p \in \N$.
\end{definition}

We are now ready to prove the main result of this section.

\begin{theorem}
\label{thm:SplitEpicSolved}
Given two objects $X \subset S^\Z$ and $Y \subset R^\Z$ and a morphism $f : X \to Y$ in K3, it is decidable whether $f$ is split epic. If $X$ is an SFT, split epicness is equivalent to the strong periodic point condition.
\end{theorem}

\begin{proof}
We assume that $f$ is a symbol map by recoding $X$ if necessary, and assume that it has a section $g : Y \to X$. Let $M = 2^N$ where $N$ is the syntactic monoid of $X$. Define the map $H : R^* \to M$ by $w \mapsto \{ (u)_X \;|\; u \in f^{-1}(w) \}$. It is easy to see that $H$ is a monoid homomorphism. Let $h : Y \to \{0,1\}^\Z$ and $k \in \N$ be given by Lemma~\ref{lem:RamseyPeriodMarker} for $Y$ and $H$. We now construct another section $\phi : Y \to X$ with radius at most $3(k + K^2) + K^{|S|^{2K+1}}$, where $K$ is as in Lemma~\ref{lem:RamseyPeriodMarker}. Let $G : \bigcup_{p \in \N} \mathcal{P}_p(Y) \to \B(X)$ be the function $u \mapsto g(\INF u . u \INF)_{[0,|u|-1]}$.

Let $y \in Y$. We now give names to certain subwords of $y$ to simplify the discussion that follows. If $h(y)_{[i-1,j+1]} = 0 1^\ell 0$, then $y_{[i,j]}$ is a \emph{marked pumpable word}. Suppose then that $h(y)_{[i-1,j+1]} = 1 0^\ell 1$. If $\ell \leq 2k$, then $y_{[i,j]}$ is a \emph{short words}, and otherwise (including the cases where $-i$ and/or $j$ is infinite) a \emph{long periodic word}. We proceed by defining the $\phi$-images ($f$-preimages) for these words in the following order:
\begin{enumerate}
\item short words,
\item long periodic words, and
\item marked pumpable words.
\end{enumerate}

Now, the idea is to think of a marked pumpable word as being an infinite repetition of that word, so that local rules cannot `see' beyond such a repetition. More precisely, consider a subword $v_1 w v_2$ of $y \in Y$ where the $v_i$ are marked pumpable words and $w$ is a short word. Consider the point $y' = \INF v_1 .w v_2 \INF \in Y$, for which we have $g(y') = \INF G(v_1) w'. w'' w''' G(v_2) \INF \in X$, where $|w''| = |w|$. The local rule of $\phi$ chooses $w''$ as the $f$-preimage of $w$, and then $w''$ only depends on the word $v_1 w v_2$.

Consider then a long periodic word $w$ in $y$. By the properties of $h$, it is actually periodic with some period $p \leq k$, so denote $w = u^\ell u'$, where $|u| = p$, $|u'| < p$ and $\ell \geq 2$. In principle, $\ell$ may also be infinite, but it is enough to consider finite $\ell$, and handle the infinite case by taking the limit of the finite cases. The local rule of $\phi$ chooses $G(u)^\ell G(u)_{[0,|u'|-1]}$ as the $f$-preimage of $w$, and this can be computed locally with a radius of $k$.

Consider finally a subword $v_1 w_1 v_2 w_2 v_3$ of $y$, where the $w_i$ are either short words or long periodic words, and the $v_i$ are marked pumpable words. Again, the length of $w_1$ or $w_2$ may be infinite, but we handle this case by taking the limit. Now, we have already chosen $f$-preimages $w_i''$ for the $w_i$ such that
\[ g(\INF v_i .w_i v_{i+1}^\infty) = \INF G(v_i) w_i'. w_i'' w_i''' G(v_{i+1}) \INF \in X \]
for some $w_i', w_i''' \in \B(X)$ of minimal length. If $w_i$ is a short word, this follows directly from the way $w_i''$ was chosen, and if $w_i$ is a long periodic word, this follows by noting that $\INF v_i w_i v_{i+1} \INF$ is actually periodic with period dividing both $|v_i|$ and $|v_{i+1}|$, and choosing $w_i' = w_i''' = \epsilon$. In particular, $|v_2|$ divides $|w_1'''|$ and $|w_2'|$.

Now, suppose $g$ has radius $r \in \N$. Then $|w_i'|, |w_i'''| \leq r$, and
\[ \INF v_1 .w_1 v_2^{2r+1} w_2 v_3 \INF \in Y \]
because $v_2$ is pumpable. We also have
\begin{align*}
g(\INF v_1 .w_1 v_2^{2r+1} w_2 v_3 \INF) = \INF G(v_1) w_1'. w_1'' w_1''' G(v_2)^\ell w_2' w_2'' w_2''' G(v_3) \INF,
\end{align*}
for some $\ell > 0$. Since $v_2$ is pumpable for $H$, we have $H(v_2) = H(v_2^{2r+1})$, and in particular there exists an $f$-preimage $v_2' \in \B(X)$ of $v_2$ such that $v_2' \sim_X w_1''' G(v_2)^\ell w_2'$. The local rule of $\phi$ chooses such a $v_2'$ as the $f$-preimage of $v_2$. If both $w_1$ and $w_2$ are short words, then $v_2'$ depends only on the subword $v_1w_1v_2w_2v_3$. If $w_1$ ($w_2$) is a long periodic word, then $v_2'$ is determined by $v_2w_2v_3$ ($v_1w_1v_2$, respectively). By the definition of $h$, at least one of the words must be a short word.

We have now defined a block map $\phi : Y \to X$ which is clearly a section of $f$, since $\phi(x)_i$ was chosen as an $f$-preimage of $x_i$ for all $x \in X$ and $i \in \Z$. The radius of $\phi$ is at most $3k$ plus the radius of $h$.

Suppose finally that $X$ is an SFT with window size $2$. If $f$ is split epic, then clearly the $p$-periodic point condition holds for all $p \in \N$, as a section of $f$ gives a consistent set of preimages for each set of periodic points. Conversely, if the strong periodic point condition holds, we proceed as above, but define the map $G$ using the $k$-periodic point condition, as the values $G(u)$ were only needed in the proof when $|u| \leq k$. Instead of choosing the preimages of $\INF v_i w_i v_{i+1}^\infty$ using an assumed section, we use the strong periodic point condition, and since $X$ is an SFT with a small window size, the preimages can safely be glued together.
\end{proof}

It is easy to find examples of block maps between (mixing) sofic shifts that satisfy the strong periodic point condition, but are not split epic. Thus the characterization cannot be extended to K3 or even M3.

\begin{example}
Let $X = \B^{-1}((0^*10^*20^*3)^*)$ and $Y = \B^{-1}((0^*10^*10^*3)^*)$, and define the symbol map $f : X \to Y$ by $2 \mapsto 1$ and $a \mapsto a$ for $a \in \{0,1,3\}$. Then $f$ is surjective and satisfies the strong periodic point condition (it is even injective on periodic points, which is a stronger condition), but is not split epic.
\end{example}

The simpler condition that every periodic point of $Y$ have a preimage with the same period is not sufficient for split epicness even in the case of SFTs, as shown by the following example.

\begin{example}
Let $S = \{0,1,\#\}$, and define the mixing SFTs $X, Y \subset \{0,1,\#\}^\Z$ by $X = \B^{-1}(((0^+ + 1^+)\#)^*)$, and $Y = \B^{-1}((\#^+(0 + 1))^*)$. Define the block map $f : X \to Y$ by the local function
\[ F(a,b) =
	\left\{ \begin{array}{ll}
		b, & \mbox{if } a = \#, \\
		\#, & \mbox{otherwise.}
	\end{array} \right. \]
Intuitively, configurations of $X$ consist of arbitrarily long runs of $0$s and $1$s separated by the $\#$-symbols, and $f$ compresses these runs into single symbols in $Y$. The morphism $f$ is surjective and every periodic point has a preimage of the same period, but is not split epic in any of the categories.
\end{example}

We mention the following interesting property of split epic morphisms.

\begin{proposition}
Assume $f : X \to Y$ is split epic in any of the categories, where $X$ is a mixing SFT. Then $Y$ is also a mixing SFT.
\end{proposition}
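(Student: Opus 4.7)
The plan is to exploit the idempotent $e = g \circ f : X \to X$, where $g : Y \to X$ is a section of $f$. Since $f \circ g = \ID_Y$, we have $e \circ e = g \circ (f \circ g) \circ f = g \circ f = e$, so $e$ is an idempotent endomorphism of the mixing SFT $X$. Moreover $g$ is injective (left-invertible by $f$) and hence gives a conjugacy between $Y$ and its image $g(Y) \subset X$, with inverse $f|_{g(Y)}$. So it suffices to show that $g(Y)$ is a mixing SFT as a subshift of $X$.

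The key observation is that $g(Y)$ coincides with the fixed-point set $\mathrm{Fix}(e) = \{x \in X \;|\; e(x) = x\}$. One inclusion is immediate from idempotency: if $x = g(y)$, then $e(x) = g(f(g(y))) = g(y) = x$. The other follows since $e(x) = x$ forces $x = g(f(x)) \in g(Y)$. Now if $e$ has radius $r$, then the condition $e(x)_0 = x_0$ depends only on $x_{[-r,r]}$, so $\mathrm{Fix}(e)$ is cut out of $X$ by finitely many forbidden patterns of length $2r+1$. Combined with a window size for $X$, this exhibits $g(Y)$, and therefore $Y$, as an SFT.

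For mixing, one uses the standard fact that a factor of a mixing subshift is mixing: given $u,v \in \B(Y)$, pick any preimages $u', v' \in \B(X)$, apply mixing of $X$ to find connecting words $w'$ of every sufficiently large length, and push forward by $f$ to produce connecting words $f(u' w' v')$ containing $u$ and $v$ as prescribed factors. Since $f$ is surjective, $Y$ inherits the mixing property from $X$, and together with the SFT conclusion above, $Y$ is a mixing SFT.

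I expect no serious obstacle here; the only subtlety is recognising that split epicness (as opposed to mere surjectivity) gives the idempotent $e$, which is what upgrades "factor of a mixing SFT" (a priori only mixing sofic) to "mixing SFT". The argument is uniform across all twelve categories because the only structural input used is the existence of block maps $f, g$ with $f \circ g = \ID_Y$; pointedness or transitivity constraints on the ambient category play no role.
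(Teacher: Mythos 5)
Your proof is correct, and it takes a genuinely more self-contained route than the paper's. Both arguments begin identically, by forming the idempotent $e = g \circ f$ on $X$ and noting that $g$ is a conjugacy from $Y$ onto $g(Y)$. The paper then finishes in one stroke by citing an external result (its reference [Ta07]) asserting that the image of an idempotent cellular automaton on a mixing SFT is a mixing SFT, and transporting this back along $g$. You instead replace that citation with two elementary observations: the image of an idempotent equals its fixed-point set $\mathrm{Fix}(e)$, which is cut out of $X$ by forbidding the finitely many words $w$ of length $2r+1$ with $E(w) \neq w_r$ and is therefore a subSFT of the SFT $X$; and the mixing of $Y$ comes for free from $Y = f(X)$ being a factor of the mixing $X$, so you never need to know that $g(Y)$ itself is mixing. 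This decoupling of the SFT part (via local fixed-point conditions) from the mixing part (via the factor property) is what makes your argument elementary, at the cost of being slightly longer; the paper's version buys brevity by outsourcing exactly the step you prove by hand. Both proofs are uniform over the twelve categories for the reason you state.
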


\begin{proof}
Since $f$ is surjective, we have $f(X) = Y$, and this subshift is mixing sofic. Let $g : f(X) \to X$ be a section of $f$. Then $g \circ f$ is an idempotent cellular automaton on $X$, so $g(f(X)) \subset X$ is a mixing SFT by \cite{Ta07}. Since $g$ is an isomorphism between $f(X)$ and $g(f(X))$, also $f(X) = Y$ is a mixing SFT.
\end{proof}

Suppose that $f : X \to Y$ is split epic, where $X$ is a mixing SFT and $Y \subset X$, so that $f$ can be seen as a cellular automaton on $X$. By the above, the image $Y$ is a mixing SFT, and it is tempting to ask whether this holds for the limit set of $f$, which would imply that $f$ is stable. However, we have the following counterexample. Let $X = \{0, 1, \hat 0, \hat 1\}^\Z$, and let $g$ be any unstable cellular automaton on $Y = \{0, 1\}^\Z$. Define $f : X \to Y$ by stating $f|_Y = g$ and $f(\hat x) = x$ for all $x \in Y$, and then using the Extension Lemma to extend $f$ to the whole of $X$. Now, the morphism $h : Y \to X$ defined by $h(x) = \hat x$ is a section for $f$, so that $f$ is split epic. However, $f$ is unstable since $g$ is.

\subsection{Other Classes of Morphisms}

We now discuss split monicness, and the simpler cases of split epicness not covered by the previous subsection.

\begin{proposition}
\label{prop:1Split}
In (T/M/P)1, all split epis and split monos are isomorphisms.
\end{proposition}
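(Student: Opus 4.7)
The plan is to reduce both statements to the standard rigidity fact that on a transitive SFT of positive entropy, every injective cellular automaton is bijective. Since every object $X$ of (T/M/P)1 is a transitive SFT of positive entropy and every morphism is a cellular automaton on such an $X$, a one-sided inverse in these categories should automatically extend to a two-sided inverse.

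Suppose first that $f : X \to X$ is split epic with section $g : X \to X$, so $f \circ g = \ID_X$. Then $g$ is injective as a function. The image $g(X)$ is a compact shift-invariant subset of $X$, hence a subshift, and $g$ restricts to a topological conjugacy $X \to g(X)$, so $h(g(X)) = h(X) > 0$. Because $X$ is a transitive SFT of positive entropy, no proper subshift of $X$ has entropy equal to $h(X)$ (a standard consequence of the Perron--Frobenius characterization of entropy for irreducible SFTs; see \cite{LiMa95}). Hence $g(X) = X$, making $g$ a continuous bijection from a compact Hausdorff space to itself, and therefore a homeomorphism. Since $g$ commutes with the shift, so does $g^{-1}$, so $g^{-1}$ is a cellular automaton. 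From $f \circ g = \ID_X$ we read off $f = g^{-1}$ as functions, whence $g \circ f = \ID_X$ and $f$ is an isomorphism.

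The split monic case is formally dual: if $g \circ f = \ID_X$, then $f$ is injective, hence bijective with $f^{-1}$ a cellular automaton by the same entropy argument, forcing $g = f^{-1}$ and $f \circ g = \ID_X$. For the pointed variant P1 one must additionally check that the constructed inverse still preserves the distinguished uniform point, but this is automatic: if a bijection $f$ satisfies $f(p(X)) = p(X)$, then $f^{-1}(p(X)) = p(X)$, so $f^{-1}$ is a morphism of P1. I do not expect any genuine obstacle; the whole argument is driven by entropy rigidity of transitive SFTs combined with the fact that continuous bijections of compact Hausdorff spaces are homeomorphisms.
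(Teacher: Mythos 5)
Your proof is correct and takes essentially the same route as the paper's: a one-sided inverse forces one of the two maps to be injective, and an injective cellular automaton on a transitive positive-entropy SFT is bijective, so the one-sided inverse is a genuine inverse. The paper simply writes ``injective, hence bijective'' where you spell out the entropy-rigidity justification and the (automatic) check that the inverse preserves $p(X)$ in P1.
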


\begin{proof}
If $f : X \to X$ has a retract, then $f$ is injective, hence bijective by the Garden of Eden Theorem. Conversely, if $f$ has a section $g$, then $g$ is injective, hence bijective, and $f$ is its inverse.
\end{proof}

In K1, there are split epimorphisms and split monomorphisms that are not bijective.

\begin{example}
Let $X = \B^{-1}(0^*1^*2^*)$, and define  $f : X \to X$ by $f(\INF 0 . 1^n 2 \INF) = \INF 0 . 1^{n-1} 2 \INF$ for all $n \geq 1$, and $f(\INF 0 . 2 \INF) = \INF 0 . 2 \INF$. Now $f$ is split epic, since the block map $g : X \to X$ defined by $g(\INF 0 . 1^n 2 \INF) = \INF 0 . 1^{n+1} 2 \INF$ for all $n \geq 0$ is its section. Similarly, $g$ is split monic, since $f$ is its retract. These maps are not bijective.
\end{example}

For split monomorphisms, the characterization in the mixing SFT case is basically just the Extension Lemma. Note that the usual proof of the Extension Lemma is similar to our arguments in the previous subsection, which is not surprising since the two are dual concepts.

\begin{definition}
\label{def:PericQInj}
We say a block map $f : X \to Y$ is \emph{peric} if $X \perleq Y$.
\end{definition}

Note that the condition of being peric is not really a property of block maps, but instead a property of pairs of subshifts. Also, every morphism is peric in P(1/2/3) and (K/T/M/P)1. 

\begin{proposition}
The split monics of (M/P)2 are exactly the peric injections.
\end{proposition}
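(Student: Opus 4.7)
My plan is to prove both directions by combining two facts established earlier in the paper: that the inverse of a continuous bijection from a compact space to a Hausdorff space is continuous (so the restriction of an injective block map to its image is a block map), and the Extension Lemma together with its Corollary~\ref{cor:ExistsMap}.

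For the forward direction, suppose $f : X \to Y$ is split monic with retraction $g : Y \to X$. From $g \circ f = \ID_X$ we immediately get injectivity of $f$. Moreover, the mere existence of the block map $g : Y \to X$ with $X$ a mixing SFT forces $X \lhd Y$ by Corollary~\ref{cor:ExistsMap}, which is exactly the condition that $f$ be peric. In the pointed category P2, nothing extra is needed here because peric injections already exist only when both subshifts have a uniform point.

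For the converse, suppose $f : X \to Y$ is an injective peric block map. The restriction $f : X \to f(X)$ is a continuous bijection from the compact space $X$ onto the Hausdorff subshift $f(X) \subset Y$, so its inverse $f^{-1} : f(X) \to X$ is continuous, and since it commutes with the shift, it is a block map. Now I want to extend $f^{-1}$ across all of $Y$ by invoking the Extension Lemma, using $f(X) \subset Y$ as the source inclusion, $X$ as the target (a mixing SFT), and the peric hypothesis $X \lhd Y$ as the divisibility assumption. This produces a block map $\tilde g : Y \to X$ with $\tilde g|_{f(X)} = f^{-1}$, whence $\tilde g \circ f = \ID_X$ and $f$ is split monic in M2.

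For P2, one only needs to check that the retraction $\tilde g$ is pointed. Since $f$ is pointed we have $p(Y) = f(p(X)) \in f(X)$, so $\tilde g(p(Y)) = f^{-1}(p(Y)) = p(X)$ automatically, because on $f(X)$ the extension agrees with $f^{-1}$. Thus $\tilde g$ is a morphism of P2 and the equivalence holds in both categories. I do not anticipate any real obstacle: the Extension Lemma is tailored precisely for this dual of the split epic situation, and the only technical point is noticing that the hypotheses of the Extension Lemma (target mixing SFT, $X \lhd Y$) match exactly the ``mixing SFT source $X$ in (M/P)2'' and ``peric'' hypotheses.
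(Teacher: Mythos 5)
Your proof is correct and follows essentially the same route as the paper's: injectivity plus Corollary~\ref{cor:ExistsMap} for the forward direction, and the Extension Lemma applied to $f^{-1} : f(X) \to X$ for the converse. The only additions are the explicit compactness argument for continuity of $f^{-1}$ and the pointedness check in P2, both of which the paper leaves implicit.
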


\begin{proof}
If $f : X \to Y$ has a retract, then necessarily $X \perleq Y$ by Corollary~\ref{cor:ExistsMap}, and $f$ is injective. Conversely, if $X \perleq Y$ holds and $f$ is injective, then $f(X)$ is a mixing SFT conjugate to $X$ via $f$. Let $g : f(X) \to X$ be the inverse of $f : X \to f(X)$. By the Extension Lemma, $g$ has an extension $\tilde g : Y \to X$, which is then a retract of $f$.
\end{proof}

In particular, the split monics of P2 are exactly the injections.

\begin{corollary}
It is decidable whether $f : X \to Y$ is split monic in (M/P)2.
\end{corollary}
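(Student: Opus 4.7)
The plan is to reduce to the previous proposition, which characterizes the split monics of (M/P)2 as precisely the peric injections, and then to decide each of the two properties separately from the data of $f$, $X$, and $Y$.

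First, injectivity of a block map between SFTs is classically decidable. One standard approach is to present $f$ on a labeled graph whose bi-infinite paths are the points of $X$ labeled by their $f$-images, and test on a product construction whether there exist two distinct bi-infinite paths carrying the same label. This is the same test that shows that the image of an SFT under a block map is sofic, and it terminates because all data involved is finite.

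Second, the peric condition $X \lhd Y$ is decidable whenever $Y$ is a mixing SFT. Fix an irreducible, aperiodic graph presentation of $Y$ with adjacency matrix $A$. Perron--Frobenius theory for aperiodic nonnegative integer matrices gives a computable $N_0$ such that $\mathrm{tr}(A^n) > 0$ for every $n \geq N_0$; equivalently, $Y$ has a periodic point of every period $n \geq N_0$. Consequently, for any periodic point of $X$ whose period $p$ satisfies $p \geq N_0$, the divisibility requirement is satisfied automatically by taking $q = p$. What remains is a finite check: enumerate the periods $p < N_0$ actually realized in $X$ (these are the lengths of the cycles in a graph presentation of $X$, up to $N_0$), and for each such $p$, check whether some divisor $q \mid p$ is realized as a cycle length in the presentation of $Y$.

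Running both algorithms and conjoining their outputs decides whether $f$ is a peric injection, and hence, by the preceding proposition, whether $f$ is split monic in (M/P)2. I do not expect any serious obstacle here; the only nontrivial ingredient is the effective bound $N_0$ beyond which a mixing SFT realizes every period, which is standard.
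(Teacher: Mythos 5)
Your argument is correct and takes essentially the same route as the paper: the paper reduces to the preceding proposition and decides pericness ``by comparing the zeta functions of $X$ and $Y$'', which is exactly the comparison of periodic-point counts that you carry out explicitly via the Perron--Frobenius bound $N_0$ and the finite check on small periods. You additionally spell out the classical decidability of injectivity for block maps between SFTs, which the paper leaves implicit, so there is nothing to object to.
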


\begin{proof}
Pericness is decidable by Corollary~\ref{cor:PericDecidable}. The decidability of injectivity is standard.
\end{proof}

We finally consider regular epimorphisms and monomorphisms. Recall that an epimorphism (monomorphism) is regular if it is the coequalizer (equalizer) of some pair of morphisms. The case of regular epis in K(2/3/4) is a byproduct of the proof of Proposition~\ref{prop:Regularity} in Section~\ref{sec:Properties} (see Section~\ref{sec:Limits} for more on equalizers and coequalizers).

\begin{proposition}
\label{prop:RegularEpis}
In K(2/3/4), every epimorphism is regular, and thus the regular epimorphisms are exactly the surjections.
\end{proposition}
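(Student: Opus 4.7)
The plan is to realize every surjective morphism $f:X\to Y$ in K2 or K3 as the coequalizer of its own kernel pair. Set $K=\Ker{f}\subset X\times X$ and let $p_1,p_2:K\to X$ be the two coordinate projections (which are symbol maps, hence block maps). The first thing to check is that $K$ is an object of the category: in K2, $X\times X$ is an SFT and the condition $f(x)=f(x')$ can be enforced windowwise (using the local rule of $f$), so $K$ is a subSFT, cf.\ the remark following the definition of local equivalence relations; in K3, we have $K=(f\times f)^{-1}(\Delta_Y)$, the preimage of a sofic shift under a block map between sofic shifts, and is therefore sofic. Either way $K$ sits inside the category, and by construction $f\circ p_1=f\circ p_2$.

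Next, I verify the universal property. Suppose $g:X\to Z$ is any morphism with $g\circ p_1=g\circ p_2$. Unwinding this equation, $g(x)=g(x')$ whenever $(x,x')\in K$, i.e.\ $\Ker{f}\subseteq\Ker{g}$. Corollary~\ref{cor:ContConnector} then furnishes a unique block map $u:f(X)\to g(X)$ satisfying $u\circ f=g$. Since $f$ is surjective, $f(X)=Y$, and composing with the inclusion $g(X)\hookrightarrow Z$ (a symbol map) gives the required block map $u:Y\to Z$ in the category with $u\circ f=g$. Uniqueness is immediate: any two morphisms $u,u':Y\to Z$ that agree after precomposition with a surjection must agree pointwise, since each $y\in Y$ has an $f$-preimage.

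This exhibits $f$ as a coequalizer of $(p_1,p_2)$, so $f$ is regular epic. Combined with Proposition~\ref{prop:SurIsEpic} (surjective $=$ epic in these categories) and the trivial observation that regular epis are epis, we conclude that in K2 and K3 the three classes (surjection, epi, regular epi) coincide. The main conceptual step is really just Corollary~\ref{cor:ContConnector}, which converts the kernel-inclusion $\Ker{f}\subseteq\Ker{g}$ directly into the factoring morphism needed for the universal property; the only slightly delicate point is verifying that $\Ker{f}$ belongs to the category in the sofic case, which is handled by the preimage-of-sofic argument above.
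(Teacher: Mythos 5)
Your proof is correct and follows essentially the same route as the paper: the paper also exhibits a surjective $f$ as the coequalizer of its kernel pair $p_1,p_2 : \Ker{f} \to X$, invoking Corollary~\ref{cor:ContConnector} to produce the unique factoring morphism (this is done inside the proof of Proposition~\ref{prop:Regularity}, from which Proposition~\ref{prop:RegularEpis} is extracted as a byproduct). Your extra care about $\Ker{f}$ being an object of the category and about composing with the inclusion $g(X)\hookrightarrow Z$ is consistent with what the paper leaves implicit.
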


For regular monomorphisms, however, the situation is different. The following result uses Proposition~\ref{prop:LimitsExist} and Proposition~\ref{prop:Equalizers} from Section~\ref{sec:Limits}.

\begin{proposition}
\label{prop:RegMonics}
In (K/T/M/P)2 and K(3/4), a monomorphism $f : X \to Y$ is regular if and only if it is injective and $f(X)$ is a subSFT of $Y$. In (M/P)3 (T3), it is regular if and only if it is injective and $f(X)$ is the unique maximal mixing (transitive, respectively) sofic subshift of some subSFT of $Y$.
\end{proposition}

\begin{proof}
First, every equalizer in the aforementioned categories is of the corresponding form, by the two propositions.

Conversely, up to composition with an isomorphism, every injective map is an inclusion $i : X \hookrightarrow Y$. Consider first the categories (K/T/M/P)2 and K(3/4), and suppose $X$ is a subSFT of $Y$. Let $m \in \N$ be the window size of $X$ relative to $Y$, and define two block maps $g_0, g_X : Y \to \{0, 1\}^\Z$ by $g_0(x) = \INF 0 \INF$ for all $x \in X$, and $g_X(x)_0 = 0$ if and only if $x_{[0, m-1]} \in \B_m(X)$. Then, $g_X^{-1}(\INF 0 \INF) = X$, the block maps are morphisms of the same category as $i$, and $i$ is the equalizer of $g_0$ and $g_X$ in that category, by Proposition~\ref{prop:LimitsExist} and Proposition~\ref{prop:Equalizers}.

Next, consider the categories (M/P)3 (T3), and suppose that there exists a subSFT $Z \subset Y$ such that $X$ is the unique maximal mixing (transitive, respectively) sofic subshift of $Z$. Using the above construction to obtain $g_0, g_Z : Y \to \{0,1\}^\Z$, we have $Z = g_Z^{-1}(\INF 0 \INF)$, and then $i$ is the equalizer of $g_0$ and $g_Z$ by Proposition~\ref{prop:Equalizers}.
\end{proof}

In particular, a monomorphism of (K/T/M/P)2 is regular if and only if it is injective. Since all monomorphisms of K2 are thus regular, but those of K3 are not, we can finally state the obvious.

\begin{corollary}
The categories K2 and K3 are not equivalent.
\end{corollary}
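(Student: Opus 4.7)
The plan is essentially a one-line appeal to the invariance of categorical properties under equivalence. Proposition \ref{prop:Regularity} established that K3 is regular, while the immediately preceding example exhibits a morphism of K2 whose image is proper sofic and hence admits no image factorization, showing K2 fails to be regular. So all that remains is to observe that regularity is a property preserved under equivalence of categories, which gives the contrapositive: K2 and K3 cannot be equivalent.

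To justify the preservation claim, I would remark that every ingredient in the definition of a regular category is purely categorical. More precisely, an equivalence $F : \cat \to \catt$, being full, faithful and essentially surjective, preserves and reflects limits (so it preserves finite completeness and the formation of kernel pairs), preserves and reflects colimits (hence coequalizers of kernel pairs), and preserves and reflects monomorphisms, epimorphisms and pullback squares. In particular a morphism $f$ in $\cat$ is a regular epimorphism iff $F(f)$ is, and the pullback of $F(f)$ along any morphism of $\catt$ is isomorphic to the image under $F$ of a pullback of $f$ in $\cat$. Consequently, if $\catt$ is regular then so is $\cat$, and vice versa.

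Applying this to the two categories in question: if there existed an equivalence between K2 and K3, then regularity of K3 would transfer to K2. Since the previous example shows that K2 admits a morphism with no image factorization, this is impossible. Therefore K2 and K3 are not equivalent.

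I do not expect any obstacle here; the only thing to be careful about is to invoke the preservation of regular epimorphisms and of pullbacks explicitly, rather than treating it as obvious, since \emph{regular} epimorphism is defined via a colimit rather than as an epimorphism with a property visible from the outside. Everything else is purely formal.
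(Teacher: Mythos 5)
Your argument is exactly the one the paper intends: Proposition~\ref{prop:Regularity} shows K3 is regular, the preceding example shows K2 is not, and regularity is invariant under equivalence of categories. The extra care you take in justifying that equivalences preserve regular epimorphisms and pullbacks is correct and only makes explicit what the paper leaves implicit.
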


Regular monomorphisms are used in Section~\ref{sec:Other} to characterize the SFT objects of K(3/4). In the case of (T/M/P)3, however, we do not have a more exact characterization, and the following example shows some of the related complications.

\begin{example}
Let $X \subset \{0,1\}^\Z$ be the sofic shift consisting of those $x \in \{0,1\}^\Z$ that satisfy the following parity condition: whenever $x_{[i,i+2k+1]} = 0 1^{2k} 0$ and $x_{[j,j+2m+1]} = 0 1^{2m} 0$ for some $i, j \in \Z$ and $k, m > 0$, then $i = j$. Then $X$ contains a maximal transitive sofic shift $Y \subset X$, namely the one where every run of $1$s is of odd length, and $Y$ is also mixing. However, $Y$ is not a subSFT of $X$, since every subSFT of $X$ that contains $Y$ also contains every word $0 1^{2k} 0$ for large enough $k \in \N$. Next, let $Z \subset \{0,1,2\}^\Z$ be the mixing sofic shift defined by exactly the minimal-length forbidden words of $X$. The points of $Z$ are of the form $\cdots 2 w_{-1} 2 w_0 2 w_1 2 \cdots$ where $w_i \in \B(X)$ are arbitrary, possible empty or infinite. Then $X$ is clearly a subSFT of $Z$, obtained by forbidding the letter $2$. Now $Y$ is not a subSFT of $Z$, but by Proposition~\ref{prop:RegMonics}, the inclusion map of $Y$ to $Z$ is regular monic in the categories (T/M/P)3.

Conversely, not all inclusions of mixing sofic shifts are regular in (T/M/P)3. In fact, if $X$ is a mixing SFT and $Y \subset X$ a mixing proper sofic subshift, then the inclusion $i : Y \hookrightarrow X$ is not regular. For this, let $Z \subset X$ be a subSFT of $X$ containing $Y$. Then $Z$ is an SFT and $Y$ is contained in some transitive component $C$ of $Z$, which must then be mixing by Lemma~\ref{lem:MixingInComponent}. Since $C$ is also an SFT, it cannot equal $Y$, and thus $Y$ is not a maximal mixing or transitive sofic subshift of $Z$.
\end{example}

We also consider regular epi- and monomorphisms in the endomorphism categories (T/M/P)1.

\begin{proposition}
Every regular monic of (T/M/P)1 is an isomorphism. In P1, regular epimorphisms are isomorphisms.
\end{proposition}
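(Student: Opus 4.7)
The plan is to prove both halves with parallel arguments, combining the Garden of Eden theorem (GoE) with Proposition~\ref{prop:Monicness} and Proposition~\ref{prop:1Split}.

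For the first statement, I would take a regular monic $f : X \to X$ in one of (T/M/P)1, realised as the equalizer of some parallel pair $g, h : X \to X$. As an equalizer $f$ is monic and, by Proposition~\ref{prop:Monicness}, therefore preinjective. Since $X$ is a positive-entropy transitive or mixing SFT, GoE then forces $f$ to be surjective: preinjectivity ensures that $f(X)$ has the same entropy as $X$, while any proper subshift of a transitive SFT of positive entropy has strictly smaller entropy. Being surjective, $f$ is epic, so right-cancelling $g \circ f = h \circ f$ gives $g = h$; hence $\ID_X$ trivially equalizes $g$ and $h$ and factors uniquely as $\ID_X = f \circ u$ by the equalizer's universal property. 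Thus $f$ is split epic, and Proposition~\ref{prop:1Split} concludes that $f$ is an isomorphism.

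For the second statement the argument is the formal dual. Start with $f : X \to X$ regular epic in P1, written as the coequalizer of some $g, h : X \to X$. Being epic, $f$ is surjective, and since $X$ is a mixing SFT the Garden of Eden theorem this time delivers preinjectivity of $f$. Here the converse direction in Proposition~\ref{prop:Monicness}, which holds precisely in the P$n$ categories, promotes preinjectivity back to monicness in P1. Left-cancelling $f \circ g = f \circ h$ then yields $g = h$, so $\ID_X$ trivially coequalizes $g$ and $h$ and factors uniquely as $\ID_X = u \circ f$. Hence $f$ is split monic, and Proposition~\ref{prop:1Split} again gives that $f$ is an isomorphism.

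The main subtlety is the use of GoE in the first statement for the transitive case T1: the classical Garden of Eden theorem is usually stated for strongly irreducible (that is, mixing) SFTs, so for T1 one needs the fact that preinjective CAs preserve the entropy of the image together with the entropy-minimality of transitive SFTs (any proper subshift has strictly smaller entropy) in order to recover surjectivity from preinjectivity.
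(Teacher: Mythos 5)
Your proposal is correct and follows essentially the same route as the paper: monic $\Rightarrow$ preinjective $\Rightarrow$ surjective (Garden of Eden) to cancel $g = h$, then factor $\ID_X$ through the (co)equalizer and invoke Proposition~\ref{prop:1Split}; for P1 the paper likewise uses surjectivity plus preinjectivity on points asymptotic to $p(X)$, which is exactly the content of the converse direction of Proposition~\ref{prop:Monicness} that you cite. The only difference is presentational — you cite those propositions where the paper inlines their proofs — and your cautionary remark about the Garden of Eden theorem on merely transitive SFTs is a reasonable (if standard) patch.
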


\begin{proof}
First, let $f : X \to X$ be the equalizer of a pair $g, h : X \to X$ in (T/M/P)1. Since $f$ is monic, it is preinjective, and thus surjective. Since $g \circ f = h \circ f$, this implies $g = h$. Consider the identity morphism $\ID_X : X \to X$. By the definition of an equalizer, there exists a morphism $u : X \to X$ such that $\ID_X = f \circ u$, but then $f$ is injective, and thus bijective by the Garden of Eden Theorem.

Second, let $f$ be the coequalizer of $g$ and $h$ in the category P1, so that $f$ is in particular surjective. If we had $g(x) \neq h(x)$ for some $x \in X$, we could assume that $x$ is asymptotic to $p(X)$. Since $f \circ g = f \circ h$, $f$ would then not be preinjective, a contradiction with its surjectivity. Thus $g = h$, and again by considering the identity morphism we see that $f$ is bijective.
\end{proof}

On the object $\{0,1\}^\Z$ of the categories (T/M)1, the two-neighbor XOR automaton from Example~\ref{ex:XORMonicness} is the coequalizer of the identity automaton and the flip automaton (by a simple application of Corollary~\ref{cor:ContConnector}), so the above result for regular epis does not hold in these cases.

\section{Categoricity of Symbolic Dynamical Properties}
\label{sec:Other}

\subsection{Categorical Properties}

We say that a property of objects or morphisms of a category $\cat$ is \emph{categorical} if it only depends on the categorical structure of $\cat$, or in other words, it is invariant under isomorphism of categories. In the previous sections, we took standard categorical notions such as epicness and monicness, and investigated what they mean in the symbolic categories. The converse question is perhaps more interesting: Given a property of interest in the world of symbolic dynamics (say, the surjectivity of a morphism, or the SFTness of a sofic shift), and a symbolic category (say, K3), does the property correspond to some categorical notion? For example, in all the symbolic categories, the answer is yes for surjectivity: the surjective block maps are the epimorphisms. Injectivity is also categorical in most of our categories (note that in the categories T1, M1 and P1, bijectivity is equal to injectivity), and corresponds to either monicness or regular monicness.

In this section, we identify particular types of objects and morphisms using first-order formulas over the natural language of the category (which obviously implies that the notion is categorical). In addition to surjectivity and injectivity, Table~\ref{tab:Morphisms} contains several more exotic properties of block maps that we have shown to be categorical, like being preinjective in P2, or being the embedding of a subSFT in K3. Of course, not all properties of block maps are categorical, and we show here one example, namely right resolvingness. Recall that a block map $f : X \to Y$ is right resolving if for all $x \neq y \in X$ such that $x_i = y_i$ for all $i \leq 0$, we have $f(x) \neq f(y)$. Left resolvingness is defined analogously.

\begin{example}
The two-neighbor XOR automaton is both left and right resolving, while the modified two-neighbor XOR automaton $f : \{0,1,2\}^\Z \to \{0,1,2\}^\Z$ defined by the local function $(2,b) \mapsto 2$ and $(a,b) \mapsto a + b \bmod 2$ for all $a \in \{0,1\}$ and $b \in \{0,1,2\}$ is left resolving, but not right resolving.
\end{example}

To see why right and left resolvingness are not categorical properties, we define an automorphic functor of the symbolic categories, and show that it does not preserve these properties. This is the mirroring functor $R$, defined on individual configurations $x \in S^\Z$ by $x^R_i = x_{-i}$ for all $i \in \Z$, and then extended to subshifts by $X^R = \{ x^R \;|\; x \in X\}$ and to block maps $f : X \to Y$ by $f^R : X^R \to Y^R$ and $f^R(x^R) = f(x)^R$. It is obvious that $R$ is an involutive functor from any symbolic category to itself, thus its automorphism, and that $f^R$ is right resolving if and only if $f$ is left resolving. Since the notions are distinct, they cannot be categorical. However, we do not know whether the property of being left \emph{and} right resolving is categorical.

\subsection{Properties of Objects}

There are many other symbolic dynamical properties which are categorical. Consider the SFT objects of K(3/4). In symbolic dynamics, a subshift $X$ being an SFT is characterized by the condition that whenever $X_0 \supseteq X_1 \supseteq \cdots$ is an infinite decreasing sequence of subshifts with $\bigcap_{n \in \N} X_n = X$, there exists $n \in \N$ such that $X_n = X$ (because each of the finitely many forbidden words is already forbidden in $X_i$ for large enough $i$). In the language of category theory, the infinite intersection can be expressed as an inverse limit. We begin with the following lemma, which is valid in any category.

\begin{lemma}
\label{lem:InverseLimitIntersection}
If $X$, with the morphisms $j_n : X \to X_n$, is the inverse limit of the diagram $X_0 \stackrel{i_0}{\leftarrow} X_1 \stackrel{i_1}{\leftarrow} \cdots$ and each $i_n$ is monic, then each $j_n$ is also monic.
\end{lemma}

Note that such a diagram does not have an inverse limit in general.

\begin{proof}
We prove the claim by induction. First, suppose there exist $f \neq g : Y \to X$ such that $j_0 \circ f = j_0 \circ g$. Since $X$ is the inverse limit of the diagram, we have $i_0 \circ j_1 \circ f = j_0 \circ f = j_0 \circ g = i_0 \circ j_1 \circ g$, and since $i_0$ is monic, $j_1 \circ f = j_1 \circ g$. Inductively we get $j_n \circ f = j_n \circ g$ for all $n \in \N$, which is a contradiction with the fact that $f$ should be the unique morphism $h : Y \to X$ with $j_n \circ h = j_n \circ f$ for all $n \in \N$. The fact that the other $j_n$ are monic follows inductively, since $j_{n-1} = i_n \circ j_n$, and both $j_{n-1}$ and $i_n$ are monic.
\end{proof}

Consider the above situation in the categories K(3/4), and denote $Y_n = (i_0 \circ \cdots \circ i_{n-1})(X_n) \subset X_0$. There we actually have an infinite descending chain $Y_0 \supseteq Y_1 \supseteq Y_2 \supseteq \cdots \supseteq j_0(X)$, and we denote by $Y = \bigcap_{n \in \N} Y_n$ its intersection. In K4, the inverse limit property of $X$ implies that $j_0(X) = Y$, but in K3, it may still be that $j_0(X) \subsetneq Y$ if $Y$ is not an object of K3. For example, this is the case if $Y$ is a subshift without periodic configurations, each $Y_n$ is the SFT defined by the forbidden patterns of $Y$ of length at most $n$, and $X = \emptyset$ is the empty subshift. Thus we need some more restrictions. For a morphism $g : X \to Z$, we say that a sequence $(g_n : X_n \to Z)_{n \in \N}$ \emph{approximates} $g$ if $g = g_n \circ j_n$ holds for all $n \in \N$. 

\begin{lemma}
With the above notation, suppose that for all morphisms $g : X \to Z$ and all approximating sequences $(g_n)_{n \in \N}$ and $(g'_n)_{n \in \N}$ of $g$, we have $g_n = g'_n$ for some $n \in \N$. Then $j_0(X) = Y$.
\end{lemma}

\begin{proof}
Suppose first that $j_0(X) = Y$, and let $g_0|_Y = g'_0|_Y = g \circ j_0^{-1} : Y \to Z$ have radius $r \in \N$. Let $n \in \N$ be such that $\B_{2r+1}(Y_n) = \B_{2r+1}(Y)$, so that we have $g_0|_{Y_n} = g'_0|_{Y_n}$. This implies
\[ g_n = g_0|_{Y_n} \circ i_0 \circ \cdots \circ i_{n-1} = g'_0|_{Y_n} \circ i_0 \circ \cdots \circ i_{n-1} = g'_n. \]

Conversely, suppose that $j_0(X) \subsetneq Y$, and let $w \in \B(Y) \setminus \B(j_0(X))$. Denote by $g : X \to \{0,1\}^\Z$ the all-$0$ morphism. Then $g$ has two approximating sequences that differ for every $n \in \N$, namely, the sequence $(g_n)_{n \in \N}$ where every $g_n : X_n \to \{0,1\}^\Z$ is the all-$0$ map, and $(g'_n)_{n \in \N}$ where every $g'_n : X_n \to \{0,1\}^\Z$ is induced by the indicator function of $w$.
\end{proof}

Thus, the characterization of SFTs in K(3/4) is as follows.

\begin{proposition}
\label{prop:ThisIsSFT}
An object $X$ of K(3/4) is an SFT if and only if the following condition holds. If $(X_n)_{n \in \N}$ is a chain with monomorphisms $i_n : X_{n+1} \to X_n$ whose inverse limit is $X$, and every two approximating sequences of every morphism $g : X \to Z$ agree at some $n \in \N$, then there exists $n \in \N$ such that each $i_m$ for $m \geq n$ is an isomorphism. In K4, the condition on approximating sequences can be dropped.
\end{proposition}

This condition cannot be vacuously true unless $X = \emptyset$, since every nonempty subshift is an inverse limit of a diagram of SFTs. This way of characterizing the SFTs is nice, because it allows one to also extract the SFT objects of T3, M3 or P3 (with a similar proof). However, this is a `second order property', since we need to quantify over infinite diagrams. In K(3/4), SFTs $X$ can also be characterized with a first-order predicate, since the regular monomorphisms of K(3/4) are exactly those with subSFT images.

\begin{corollary}[of Proposition~\ref{prop:RegMonics}]
An object $X$ of K(2/3/4) is an SFT if and only if every monomorphism $e : X \to Y$ is regular.
\end{corollary}

Of these two characterizations of SFTs, the first seems more natural, since it captures the intuition that SFTs are `absolutely cofinite', while the characterization via regular monomorphisms sees SFTs as `absolutely equalizer-like' objects, and it characterizes SFTs in K(3/4) mainly because block maps are finitary by nature. We saw in Section~\ref{sec:Morphisms} that this does not characterize the SFTs in, for example, M3.

In the rest of this subsection, we characterize different properties of SFTs and sofic shifts in the categories K(2/3/4), starting with the following.

\begin{proposition}
Suppose $X$ is a finite subshift in K(2/3/4). Then being conjugate to $X$ is a first-order property.
\end{proposition}

\begin{proof}
First, suppose $X$ is the orbit of a single point. Then, $Y \cong X$ is equivalent to $Y$ not being a coproduct of two nonempty subshifts (since coproducts are disjoint unions in the categories), and $Y$ having the same number of endomorphisms as $X$. By induction, it is easy to construct a first-order statement in the case that $X$ is a coproduct of two smaller subshifts.
\end{proof}

We now study the boundaries of the Extension Lemma to show that being a mixing SFT is a categorical property in K(2/3/4).

\begin{lemma}
Suppose $Y$ is an SFT that is not mixing. Then there exist SFTs $X \subset Z$ such that the set of block maps from $Z$ to $Y$ is nonempty, and a block map $f : X \to Y$ that cannot be extended to a block map $\tilde f : Z \to Y$.
\end{lemma}

\begin{proof}
Suppose first that $Y$ is not even transitive, so that there exist $v_1, v_2 \in \B(Y)$ such that $v_1 w v_2 \notin \B(Y)$ for all $w \in \B(Y)$. Let $y_1, y_2 \in Y$ be eventually periodic points such that $v_i$ occurs in $y_i$, and let $q \in \N$ be the least common multiple of the eventual periods. Define $X$ as the union of the orbit closures of $x_1 = \INF (a_1 \#^{q-1})(b_1 \#^{q-1}) \INF$ and $x_2 = \INF (a_2 \#^{q-1})(b_2 \#^{q-1}) \INF$, and define the block map $f : X \to Y$ by $f(x_1) = y_1$ and $f(x_2) = y_2$. Next, define the SFT $Z$ by adding all points of the form
\begin{equation}
\label{eq:NewPoint}
\INF (a_1 \#^{q-1})(b_1 \#^{q-1})^j(a_2 \#^{q-1})^k(b_2 \#^{q-1}) \INF
\end{equation}
for $j, k \in \N$, and their orbit closures. There exists a block map from $Z$ to $Y$ which maps the whole of $Z$ onto a single periodic orbit whose period divides $q$, but $f$ cannot be extended to $Z$, as the images of points of the form~\eqref{eq:NewPoint} would contradict the assumption on $v_1$ and $v_2$.

Suppose then that $Y$ is transitive but not mixing. Let $p = \per(Y)$ be the period of $Y$, and let $\phi : Y \to \B^{-1}((0^{p-1} 1)^*)$ be the associated phase map with radius $r \in \N$. Let $v_1, v_2 \in \B_{2r + p}(Y)$ have different phases. Choose the points $y_i \in Y$ and $q \in \N$ as before, and define $X$, $f$ and $Z$ as above, but so that the words $v_i$ are aligned with the $b_i \#^{q-1}$-blocks of the preimages. Then the block map $f$ cannot be extended to $Z$, as the images of points of the form~\eqref{eq:NewPoint} would contradict the assumption that $v_1$ and $v_2$ have different phases.
\end{proof}

In view of the Extension Lemma, we have the following characterization.

\begin{corollary}
An SFT object $Y$ of K(2/3/4) is mixing if and only if the following holds: for all SFT objects $X$ and $Z$ such that there exists some morphism from $Z$ to $Y$, for all monomorphisms $i : X \to Z$ and all morphisms $f : X \to Y$, there exists a morphism $\tilde f : Z \to Y$ such that $f = \tilde f \circ i$.
\end{corollary}

Since mixing sofic shifts have mixing SFT covers, mixingness is a first-order property in K3. Using mixingness, we can then characterize transitivity.

\begin{lemma}
In K(2/3/4), for a sofic object $X$, the property of being a single periodic orbit is a first-order property.
\end{lemma}

\begin{proof}
This is equivalent to being a sofic shift without proper subshifts, which can be expressed for a sofic object $X$ of K(2/3/4) as `every monomorphism $f : Y \to X$ is an isomorphism'.
\end{proof}

Note that our first-order condition for being a single periodic orbit is different in K(2/3) and K4, since there exist minimal subshifts which are not periodic, and thus it is necessary to restrict to sofic objects in K4.

\begin{proposition}
\label{prop:TransitivityIsFO}
A sofic object $X$ of K(2/3/4) is transitive if and only if there exist a periodic orbit $Y$, a mixing SFT $Z$ and an epimorphism $f : Y \times Z \to X$. In particular, transitivity is a first-order property in K3.
\end{proposition}

\begin{proof}
Since every SFT satisfying the latter property is transitive, it suffices to prove the forward implication. For that, suppose $X$ is a transitive object of K(2/3). Since transitive sofic shifts have transitive SFT covers and a composition of epimorphisms is epic, we may assume that $X$ is an SFT. Let $p = \per(X)$ be the period of $X$, let $\phi : X \to \B^{-1}((0^{p-1} 1)^*)$ be the associated phase map with radius $r \in \N$, and let $m \in \N$ be a transition distance for $X$.

Recall that for all $u, v \in \B(X)$ there exist $k \in [0,p-1]$ and $w \in \B_{m+k}(X)$ such that $u w v \in \B(X)$. Let $\INF \hat w \INF \in X$ be any periodic point with $|\hat w| = q \geq \max(2r+p, m)$, and choose $Y = \B^{-1}((0^{q-1} 1)^*)$. Note that $p$ necessarily divides $q$. Construct the mixing SFT $Z$ by adding the new symbol $\#$ to $X$, and letting exactly the minimal length forbidden words of $X$ be forbidden in $Z$. We extend $\phi$ to a block map from $Z$ to itself by letting $\#$ be a spreading state.

Now, the surjective block map $f : Y \times Z \to X$ is constructed as follows. Let $(\INF (0^{q-1} 1) \INF, z) \in Y \times Z$, and define a \emph{block} to be an interval of the form $[\ell q, (\ell+1)q - 1] \subset \Z$. Now, a block $b$ is \emph{good} if $\# \not\sqsubset z_b$ and $\phi(z)_b = (0^{p-1}1)^{q/p}$, and otherwise \emph{bad}. A bad block is \emph{very bad} if both its neighbors are also bad. We define the $f$-image of each block, starting with the good blocks $b$, which are mapped to $z_b$. Also, each very bad block is mapped to the word $\hat w$. After this, $f$ can locally choose suitable images for the remaining bad blocks. Finally, we extend $f$ in a shift-invariant way and obtain a factor map from $Y \times Z$ to $X$.
\end{proof}

As a transitive subshift is a factor of the product of mixing and single-orbit subshifts, a nonwandering subshift is a factor of the product of mixing and finite subshifts.

\begin{lemma}
In K(2/3/4), a subshift $X$ is finite if and only if $X = f(Y^2)$ for some periodic orbit $Y$ and block map $f : Y^2 \to X$.
\end{lemma}

\begin{proof}
Suppose first that $X$ is finite, so that it is a disjoint union of some $k$ periodic orbits, with periods $p_0, \ldots, p_{k-1}$. Choose some representatives $x_0, \ldots, x_{k-1} \in X$ for the orbits. Let $p = \lcm \{ p_0, \cdots, p_{k-1} \}$, and let $Y$ be the orbit of $y = \INF (0^{k p - 1} 1) \INF$. Now, define $f : Y^2 \to X$ by mapping the configuration $(y, \sigma^{i + k j}(y))$ to $x_i$, for all $i \in [0, k-1]$ and $j \in [0, p-1]$. Since the least period of $x_i$ divides that of $(y, \sigma^{i + k j}(y))$, we can extend $f$ to $Y^2$ in a shift-invariant way, and then $X = f(Y^2)$. The converse direction is immediate, since $Y^2$ is finite.
\end{proof}

The following is proved almost exactly as the characterization of transitivity.

\begin{proposition}
A sofic object $X$ of K(2/3/4) is nonwandering if and only if there exist a finite subshift $Y$, a mixing SFT $Z$ and an epimorphism $f : Y \times Z \to X$. In particular, nonwanderingness is a first-order property in K3.
\end{proposition}

Using finiteness, we can also show that countability is a first-order property.

\begin{proposition}
A sofic object $X$ of K(2/3/4) is countable if and only if its every transitive sofic subobject is finite.
\end{proposition}

The properties of SFTness, mixingness and transitivity, together with the properties proved for morphisms, are quite versatile, and many other properties can be expressed using them.

\begin{example}
\label{ex:AFT}
A transitive object $X$ of K3 is \emph{of almost finite type} (AFT for short) if and only if there exists a surjection $f : Y \to X$, where $Y$ is an SFT, such that for any other SFT $Z$ and surjection $g : Z \to X$, there exists a unique $h : Z \to Y$ with $g = f \circ h$ \cite{BoKiMa85}. Thus being an AFT is a first-order categorical property in K3. There are several other definitions of AFTs in terms of their minimal right-resolving SFT covers, but as right-resolvingness is not a categorical notion, these are hard to express categorically.
\end{example}

\subsection{Variants and Invariants of Conjugacy}

Another interesting question is which of the well-known conjugacy invariants and weaker versions of conjugacy are categorical. We show that having the same entropy is a categorical property in K(2/3/4) and T2. First, we need a couple of lemmas.

\begin{lemma}[Proposition~4.4.6 in \cite{LiMa95}]
\label{lem:DecEntropy}
Let $(X_m)_{m \in \N}$ be a decreasing sequence of subshifts, that is, $X_{m+1} \subset X_m$ for all $m \in \N$, and denote $X = \bigcap_{m \in \N} X_m$. Then $\lim_m h(X_m) = h(X)$.
\end{lemma}

\begin{lemma}
\label{lem:Entropy}
For transitive SFTs $X$ and $Y$, we have $h(X) < h(Y)$ if and only if there exists a transitive SFT $Z$ such that $X \subsetneq Z$ and $Y$ factors onto $Z$.
\end{lemma}

\begin{proof}
If there exists such a $Z$, then $h(X) < h(Z) \leq h(Y)$, since a factor map decreases entropy, and a proper subshift of a transitive SFT has strictly lower entropy than it \cite[Corollary~4.4.9]{LiMa95}.

Suppose then that $h(X) < h(Y)$, and let $S$ be the alphabet of $X$. Similarly to the proof of Proposition~\ref{prop:TransitivityIsFO}, we define for every $m \in \N$ an SFT $Z_m$ over the alphabet $S \cup \{\#\}$ whose forbidden words are exactly the minimal length forbidden words of $X$, plus the set $\{ \# w \# \;|\; w \in S^*, 1 \leq |w| \leq m \}$. Then every $Z_m$ is mixing and satisfies $Z_{m+1} \subset Z_m$, and since $\bigcap_{m \in \N} Z_m = X \cup \{\INF \# \INF\}$, we also have $\lim_m h(Z_m) = h(X)$ by Lemma~\ref{lem:DecEntropy}. In particular, we have $h(Z_m) < h(Y)$ for some $m \in \N$. The claim then follows from the Lower Entropy Factor Theorem for $Z = Z_m$.
\end{proof}

\begin{lemma}[Theorem~4.4.4 in \cite{LiMa95}]
\label{lem:TransEntropy}
Let $X$ be an SFT. Then some transitive component $Y$ of $X$ satisfies $h(Y) = h(X)$.
\end{lemma}

\begin{proposition}
\label{prop:EntropyFO}
The property of having $h(X) < h(Y)$ (and thus also $h(X) = h(Y)$) for a pair of objects $X$ and $Y$ is first-order in K(2/3/4) and T2.
\end{proposition}

\begin{proof}
Suppose that we have $h(X) < h(Y)$. For all $m \in \N$, let $X_m$ be the SFT defined by the length-$m$ forbidden words of $X$. Then $(X_m)_{m \in \N}$ is a decreasing sequence of SFTs whose intersection is $X$, and Lemma~\ref{lem:DecEntropy} implies that $h(X_m) < h(Y)$ for some $m \in \N$. Then we have $h(X_m) < h(Z)$ for every SFT $Z$ into which $Y$ embeds. Thus $h(X) < h(Y)$ holds if and only if there exists an SFT $X'$ containing $X$ such that for all SFTs $Y'$ containing $Y$, we have $h(X') < h(Y')$. By Lemma~\ref{lem:Entropy} and Lemma~\ref{lem:TransEntropy}, this is a first-order condition.
\end{proof}

Of course, in the corresponding first-order formula, inclusions are replaced by monomorphisms (which corresponds to injective block maps in the four categories) and factor maps by epimorphisms.

Finally, we mention the \emph{zeta function} of a subshift, which encodes the number of periodic points of each period into a particular type of formal series. Two subshifts have the same zeta function if and only if they have the same number of periodic points of each period (or least period). This can be stated in categorical terms in the categories K(2/3/4), since the zeta function of a subshift is uniquely determined by the class of finite subshifts that can be embedded in it, and embeddings and finiteness are first-order categorical notions in K(2/3/4).

\section{Categorical Constructions}
\label{sec:Limits}

In this section, we study the existence and nature of standard categorical constructions, that is, limits and colimits, in the symbolic categories.

\subsection{Limits and Colimits}

We begin by establishing the existence of all finite limits in the categories K(2/3/4), describing the nature of these objects in the process. It is enough to prove the existence of terminal objects, binary products and equalizers, since all finite limits can be constructed from these.

\begin{proposition}
\label{prop:LimitsExist}
The categories K(2/3/4) are finitely complete, and finite limits in K(2/3) are computable.
\end{proposition}

\begin{proof}
A terminal object in these categories (in fact, in all of (K/T/M/P)(2/3) and K4) is the trivial subshift $T = \{\INF 0 \INF\}$, since for all objects $X$, there is a unique morphism $0_{XT} : X \to T$ that sends everything to the single element of $T$. The categorical product of two objects $X$ and $Y$ in (K/T/M/P)(2/3) and K4 is their coordinatewise product $X \times Y$ together with the projection symbol maps $p_1 : X \times Y \to X$ and $p_2 : X \times Y \to Y$, since every pair of block maps $f : Z \to X$ and $g : Z \to Y$ is uniquely factored through the projections by sending $z \in Z$ to the pair $(f(z),g(z)) \in X \times Y$. Finally, the equalizer of a parallel pair of morphisms $f, g : X \to Y$ in K(2/3/4) is simply the inclusion map $i$ of the subSFT $E = \{ x \in X \;|\; f(x) = g(x) \}$ of $X$, since every morphism $h : Z \to X$ with $f \circ h = g \circ h$ satisfies $h(Z) \subset E$, and thus factors uniquely through $i$.

From these constructions, and the fact that any finite limit can be mechanically constructed from finite products and equalizers \cite{Ma71}, it is clear that limits are computable in K(2/3).
\end{proof}

Now, it is known that the pullback of two morphisms $f : X \to Z$ and $g : Y \to Z$ is given by the equalizer of $f \circ p_1$ and $g \circ p_2$, where $p_1 : X \times Y \to X$ and $p_2 : X \times Y \to Y$ are the product projections. By the above, the pullback of $f$ and $g$ in K(2/3/4) is thus the fiber product $X \times_Z Y = \{ (x,y) \in X \times Y \;|\; f(x) = g(y) \}$, together with the projection maps to $X$ and $Y$. In particular, the kernel pair of $f : X \to Z$ is its kernel set $\Ker{f} = \{ (x,x') \in X^2 \;|\; f(x) = f(x') \}$.

Now, in the transitive categories (T/M/P)(2/3), the fiber product subshift $\{ x \in X \;|\; f(x) = g(x) \}$ defined for morphisms $f, g : X \to Y$ may not be an object, but if it is, then its inclusion into $X$ really is the categorical equalizer of $f$ and $g$. However, not all equalizers of T3 and (M/P)(2/3) are of this form, as shown in the following.

\begin{example}
Let $X = \{0,1\}^\Z$, and define $f : X \to X$ by $f(x)_0 = 0$ if and only if $x_{[0,2]} \in \{000,010,101\}$ for all $x \in X$. Let also $g : X \to X$ be the all-$0$ map. Now for $x \in X$ we have $f(x) = g(x)$ if and only if $x \in E = \{ \INF 0 \INF, \INF (01) \INF, \INF (10) \INF \}$. We claim that the equalizer of $f$ and $g$ in (M/P)(2/3) is the inclusion $i$ of $\{ \INF 0 \INF \}$ into $X$, which is not isomorphic to the equalizer of $f$ and $g$ in K(2/3).

Let $h : Y \to X$ be any morphism of (M/P)(2/3) such that $f \circ h = g \circ h$. Then necessarily $h(Y) \subset E$, but since $Y$ is mixing, so is $h(Y)$, and we actually have $h(Y) = \{\INF 0 \INF\}$. Now the unique morphism $u : Y \to \{ \INF 0 \INF \}$ with $h = i \circ u$ is simply the codomain restriction of $h$, and we are done.

Finally, let $X = \B^{-1}((10^*20^*)^*)$ and $Y = \{0,1\}^\Z$, which are transitive (even mixing) sofic shifts, and define $f, g : X \to Y$ as follows: $f$ is the symbol map $0, 1 \mapsto 0$ and $2 \mapsto 1$, while $g$ is again the all-$0$ map. Now we have $E = \B^{-1}(0^*10^*)$, and the inclusion of $\{\INF 0 \INF\}$ is the equalizer of $f$ and $g$ in T3, as above, but it is not isomorphic to their equalizer in K3.
\end{example}

The following collection of results characterizes the equalizers of the transitive categories. They are referred to in Proposition~\ref{prop:RegMonics} of Section~\ref{sec:Morphisms}.

\begin{proposition}
\label{prop:Equalizers}
Let $f, g : X \to Y$ be parallel morphisms, and denote $E = \{ x \in X \;|\; f(x) = g(x) \}$.
\begin{itemize}
\item In (M/P)2, $f$ and $g$ have an equalizer if and only if $E$ has at most one mixing component $E'$, and then it is the inclusion of $E'$ into $X$, or the empty map $\epsilon : \emptyset \to X$.
\item In (M/P)3, $f$ and $g$ have an equalizer if and only if $E$ has at most one maximal mixing sofic subshift $E'$, and then it is the inclusion of $E'$ into $X$, or the empty map $\epsilon : \emptyset \to X$.
\item In T2, $f$ and $g$ have an equalizer if and only if $E$ is transitive, and then it is the inclusion of $E$ into $X$.
\item In T3, $f$ and $g$ have an equalizer if and only if $E$ has a single transitive component $E'$, and then it is the inclusion of $E'$ into $X$.
\end{itemize}
\end{proposition}

Recall that maximal mixing subshifts and mixing components of sofic shifts are different notions in general.

\begin{proof}
We prove the claim in the case of M2. The others are simply variations of the same idea, except in the case of T2 we also need the fact that an SFT with only one transitive component is transitive.

Suppose first that such an $E'$ exists, denote by $i : E' \to X$ the inclusion map, and let $h : Z \to X$ be any morphism with $f \circ h = g \circ h$. This implies $h(Z) \subset E$. Now, $h(Z)$ is a mixing sofic subshift of $E$, and is thus contained in one of its transitive components, which must be mixing by Lemma~\ref{lem:MixingInComponent}, and hence equals $E'$. Then $h$ factors uniquely through $i$. Also, if no transitive component of $E$ is mixing, then we must have $h = \epsilon$.

Conversely, suppose that $E$ has two mixing components, and let $h : Z \to X$ be a morphism with $f \circ h = g \circ h$. We show that $h$ is not an equalizer of $f$ and $g$. Namely, $h(Z)$ is a mixing subshift of $E$, it is contained in some mixing component of $E$. Then the inclusion map of any other mixing component does not factor through $h$, and we are done.
\end{proof}

We then move to colimits. The categories (K/T/M/P)(2/3) and K4 do have initial objects, as it is easy to see that in (K/T/M)(2/3) and K4, they are the empty subshifts, and in P(2/3), they are the trivial subshifts $\{\INF 0 \INF\}$. The trivial subshifts are thus zero objects in P(2/3), that is, both initial and terminal. Binary coproducts also exist in K(2/3/4): the coproduct of two objects $X$ and $Y$ is the disjoint union $X \mathop{\dot{\cup}} Y$ together with the inclusion maps $i_1 : X \to X \mathop{\dot{\cup}} Y$ and $i_2 : Y \to X \mathop{\dot{\cup}} Y$.

The case of coequalizers is more subtle, as they exist for some parallel morphism pairs, but not for others, and even relatively simple cases in the mixing categories require a significant analysis. As with split epimorphisms, we present coequalizers in a separate subsection.

\subsection{Coequalizers}

As an introduction to the notion of coequalizers, we discuss the categorical notions of kernels and cokernels. These notions only make sense in categories with zero objects, or in our case, P(2/3). Recall that a morphism that factors through a zero object is called a zero morphism, and in P(2/3), they are exactly the trivial block maps $f : X \to Y$ with $f(X) = \{p(Y)\}$.

\begin{definition}
Let $\cat$ be a category with a zero object. The \emph{kernel} of a morphism $f : X \to Y$ is the equalizer of $f$ and the zero morphism $0_{X Y} : X \to Y$, and its \emph{cokernel} is their coequalizer, if these exist.
\end{definition}

Note that our definition for the kernel set of a function in Section~\ref{sec:Defs} is analogous to the notion of a kernel pair, while the categorical kernel is the generalization of the kernel of a group homomorphism or a linear function. Since the categories P(2/3) have zero objects by our earlier discussion, we can talk about kernels and cokernels. By Proposition~\ref{prop:Equalizers}, the kernel of a morphism $f : X \to Y$ in P(2/3) is the inclusion map of the maximal mixing sofic subshift of $f^{-1}(p(Y))$ into $X$, if one exists. Cokernels, on the other hand, exist only in trivial cases.

\begin{proposition}
In the pointed categories P(2/3), a morphism $f : X \to Y$ has a cokernel if and only if it is either surjective or $0_{X Y}$.
\end{proposition}

\begin{proof}
Let $Z$ be a zero object. We first note that the cokernel of the zero map $0_{X Y}$ is trivially $\ID_Y$, since for every morphism $g : Y \to Q$ with $g \circ 0_{X Y} = g \circ 0_{X Y}$ (that is, for any morphism whatsoever), there exists a unique morphism $u : Y \to Q$ ($g$ itself) with $g = u \circ \ID_Y$. Also, the cokernel of a surjection $f : X \to Y$ is the zero map $0_{Y Z} : Y \to Z$, since if a morphism $g : Y \to Q$ satisfies $g \circ f = g \circ 0_{X Y}$, then $g = 0_{Y Q}$, and in this case there trivially exists a unique map $u : Z \to Q$ (the zero map $0_{Z Q}$) such that $g = u \circ 0_{Y Z}$.

Suppose then that $f$ is neither surjective nor trivial, and denote $p(Y) = \INF a \INF$. Since $Y$ and $f(X)$ are mixing sofic shifts, there then exists a point $\INF w \INF \in Y \setminus f(X)$ and another point $\INF a v a \INF \in f(X)$ with $v, w \notin a^*$. Let $g : Y \to Q$ be a morphism with radius $r \in \N$ such that $g \circ f = 0_{X Q}$. We show that $g$ is not a cokernel of $f$. For this, define $y_1 = \INF a w a^r v a \INF$ and $y_2 = \INF a w a \INF$, so that necessarily $g(y_1) = g(y_2)$. Denote $P = \{0,1\}^\Z$, and define the block map $h : Y \to P$ with radius $r' = r + |v| + |w|$ by
\[ h(y)_0 =
\left\{
	\begin{array}{ll}
		0, & \mbox{if~} y_{[0,r'-1]} \in \B_{r'}(f(X)), \\
		1, & \mbox{otherwise}
	\end{array}
\right. \]
for all $y \in Y$. Then $h(y_1) \neq h(y_2)$, but $h \circ f = 0_{X P}$. Now there is no block map $u : Q \to P$ such that $h = u \circ g$, and thus $g$ is not a cokernel of $f$.
\end{proof}

Now we move on to the most general categories K(2/3/4). We begin by mentioning that there is an abstract characterization of the existence and nature of general coequalizers, although we prefer more hands-on techniques for computing them in the rest of this section, since one of the morphisms will always be an identity map.

\begin{proposition}
Let $f, g : X \to Y$ be a parallel pair of morphisms in K(3/4). Let $R \subset Y^2$ be the intersection of all local subSFT equivalence relations containing $(f(x), g(x))$ for all $x \in X$. Then $f$ and $g$ have a coequalizer if and only if $R$ is a subSFT of $Y^2$.
\end{proposition}

\begin{proof}
Suppose first $R$ is a subSFT. Then, there exists a finite family $(R_i)_{i=0}^{k-1}$ of local subSFT equivalence relations of $Y$ such that $R = \bigcap_{i=0}^{k-1} R_i$, and then $R$ is also local. Then there exists a surjective block map $h : Y \to Z$ with $\Ker{h} = R$, and clearly $h \circ f = h \circ g$. Also, $Z$ is an object of K(3/4). Suppose that $t : Y \to T$ is such that $t \circ f = t \circ g$. Since $\Ker{t} \subset Y^2$ is a local subSFT equivalence relation containing $(f(x), g(x))$ for all $x \in X$, it also contains $R = \Ker{h}$. Corollary~\ref{cor:ContConnector} then gives the unique morphism $u : Z \to T$ with $t = u \circ h$. Thus $h$ is the coequalizer of $f$ and $g$.

Suppose then that $R$ is not a subSFT, and let $h : Y \to Z$ be any morphism with $h \circ f = h \circ g$, so that $R \subsetneq \Ker{h}$. Then there is a local subSFT equivalence relation $R' \subset Y^2$ with $R \subset R' \subsetneq \Ker{h}$, and there exists a morphism $t : Y \to T$ with $\Ker{t} = R'$. Then $t \circ f = t \circ g$, but there exists no morphism $u : Z \to T$ such that $t = u \circ h$. Thus $h$ is not the coequalizer of $f$ and $g$, and since $h$ was arbitrary, the coequalizer does not exist.
\end{proof}

Next, we study the coequalizers of pairs $(\ID_X,f)$, where $f : X \to X$ is an endomorphism of the object $X$. This is an especially interesting case from the dynamical systems perspective, since the morphisms $g : X \to Y$ such that $g \circ f = g$ are exactly those that identify the orbits of $f$. Intuitively, such a $g$ should be viewed as a `conserved local property', and the coequalizer of $f$ and $\ID_X$, if it exists, is then a `universal' such property, that is, one that subsumes all others. The main result of this subsection is Theorem~\ref{thm:Coequalizers}, which basically states that coequalizers of $(\ID_X, f)$-pairs in K3 are uncomputable. Our main results below will be about endomorphisms $f : X \to X$ where $X$ is a full shift, and we thus use the convention from cellular automata literature of calling a bijective cellular automaton \emph{reversible}.

\begin{example}
Let $X \subset S^\Z$ be a mixing object of K(2/3), and let $f : X \to X$ be an endomorphism with a \emph{spreading state}, that is, a state $s \in S$ such that $s \in \{x_0, x_1\}$ implies $x_0 = s$. Then, every block map $g : X \to Y$ with $g \circ f = g$ must satisfy $g(x) = g(\INF s \INF)$ for all $x \in X$: If $r \in \N$ is a radius for $g$, let $x' \in X$ be such that $x'_{[-r,r]} = x_{[-r,r]}$, but $x'_n = s$ for some $n > r$. Then we have
\[ g(x)_0 = g(x')_0 = g(f^{n+r}(x'))_0 = g(\INF s \INF)_0. \]
It is then easy to see that $f$ and $\ID_X$ have a coequalizer in the respective category, namely the zero morphism $0_{XZ} : X \to Z$, where $Z$ is the trivial subshift. The same result holds if $f$ is \emph{nilpotent}, that is, satisfies $f^n(X) = \{\INF s \INF\}$ for some $n \in \N$ and $s \in S$.
\end{example}

A self-map $f$ of a set $X$ is called eventually periodic, if there exist $k \in \N$ and $p > 0$ such that $f^k = f^{k+p}$. Then $p$ is called an eventual period of $f$. \emph{The} eventual period of $x \in X$ is the smallest positive $p$ with $f^k(x) = f^{k+p}(x)$ for some $k \in \N$. As another example of coequalizers, we characterize those eventually periodic morphisms $f : X \to X$ for which the coequalizer of $f$ and $\ID_X$ exists, where $X$ is any object of the mixing categories. For this, we define some topological tools.

\begin{definition}
Let $(X, d)$ be a compact metric space, and denote by $2^X$ the set of all closed subsets of $X$. The \emph{Hausdorff metric} is defined by
\[ d_H(A, B) = \max \{ \sup_{a \in A} d(a,B), \sup_{b \in B} d(b,A) \} \]
for all $A, B \in 2^X$, where $d(a,B) = \inf_{b \in B} d(a,b)$ for $a \in X$ and $B \in 2^X$.
\end{definition}

It is well known to see that $d_H$ indeed is a metric on $2^X$, and $(2^X, d_H)$ is also compact. In the context of subshifts, we define
\[ A_{[i,i+r-1]} = \{ a_{[i,i+r-1]} \;|\; a \in A \} \subset S^r \]
for all $A \subset S^\Z$, $i \in \Z$ and $r \in \N$. Then for $n \in \N$, two closed sets $A, B \subset S^\Z$ are $2^{-n}$-close with respect to the Hausdorff metric if and only if $A_{[-n,n]} = B_{[-n,n]}$.

\begin{proposition}
\label{prop:EventuallyPeriodic}
Let $X$ be a mixing SFT object of K(3/4), and let $f : X \to X$ be eventually periodic. Then $f$ and $\ID_X$ have a coequalizer if and only if every $x \in X$ has the same eventual period.
\end{proposition}

If $f$ has this property, we say it is \emph{visibly eventually periodic}. The proposition could be proved combinatorially, but we present a topological proof using the Hausdorff metric.

\begin{proof}
Suppose first that $f$ is visibly eventually periodic with the eventual period $p > 0$, and consider the set
\[ \tilde X = \{ \{x, f(x), \ldots, f^{p-1}(x)\} \;|\; x \in X, f^p(x) = x \} \subset 2^X, \]
which we metrize with the Hausdorff metric. It is also equipped with the natural shift action $\tilde \sigma : \tilde X \to \tilde X$. Let $k \in \N$ be such that $f^k(x) = f^{k+p}(x)$ for all $x \in X$. We define the function $g : X \to \tilde X$ by $g(x) = \{ f^k(x), \ldots, f^{k+p-1}(x) \}$. First, $\tilde X$ is zero-dimensional, since it has the clopen base consisting of the sets $\{ A \in \tilde X \;|\; A_{[-n,n]} = W \}$ for all $n \in \N$ and $W \subset S^{2n+1}$. By the properties of the Hausdorff metric, $\tilde X$ is a compact metric space.

We proceed to show that $\tilde \sigma$ is expansive, so that $(\tilde X, \tilde \sigma)$ is a subshift. Assume the contrary, and suppose that for all $n \in \N$, there exist two sets $A^n = \{ a^n, \ldots, f^{p-1}(a^n) \} \in \tilde X$ and $B^n = \{ b^n, \ldots, f^{p-1}(b^n) \} \in \tilde X$ such that $A^n \neq B^n$, but $d_H(\tilde \sigma^m(A^n), \tilde \sigma^m(B^n)) < 2^{-n}$ for all $m \in \Z$. We can suppose that $a^n_{[-n,n]} = b^n_{[-n,n]}$, but since $A^n \neq B^n$, we have $a^n \neq b^n$, and thus $\sigma^m(a^n)_{[-n,n]} \neq \sigma^m(b^n)_{[-n,n]}$ for some $m \in \Z$, which we may assume to be positive. Let $m_n \geq 0$ be minimal such that $\sigma^{m_n}(a^n)_{[-n,n]} = \sigma^{m_n}(b^n)_{[-n,n]}$ but $\sigma^{m_n+1}(a^n)_{[-n,n]} \neq \sigma^{m_n+1}(b^n)_{[-n,n]}$, so that there is $q_n \in \{1, \ldots, p-1\}$ such that $\sigma^{m_n+1}(a^n)_{[-n,n]} = \sigma^{m_n+1}(f^{q_n}(b^n))_{[-n,n]}$. Now, the sequence $(a^n, b^n, f^{q_n}(b^n))_{n \in \N}$ has a limit point $(a, b, f^q(b))$, where $q \in \{1, \ldots, p-1\}$. Since $b^n_{[-n-1,n]} = f^q(b^n)_{[-n,n-1]}$ for all $n \in \N$, we have $b = f^q(b)$, contradicting the minimality of $p$. Thus $g$ is a surjective block map to the sofic shift $\tilde X$.

Suppose now that $h : X \to Y$ is any morphism with $h \circ f = h$. This clearly implies $\Ker{g} \subset \Ker{h}$, so by Corollary~\ref{cor:ContConnector} we have a unique morphism $u : \tilde X \to Y$ with $h = u \circ g$. Thus $g$ is a coequalizer of $f$ and $\ID_X$.

Suppose then that $f$ is not visibly periodic, so that there exists $x \in X$ with eventual period $q$ properly dividing $p$, and let $g : X \to Y$ be such that $g \circ f = g$. Let $r \in \N$ be larger than the radius of $g$ and the window size of $X$, and let $w,v \in \B(X)$ be such that $\INF w \INF \in X$ has least $f$-period $p$, $\INF v \INF \in X$ has least $f$-period $q$, and $x' = \INF w v^r w \INF \in X$. Such words exist because of the mixingness of $X$. Define $\INF u \INF = f^q(\INF w \INF)$, so that $x'' = \INF w v^r u \INF \in X$. Now we clearly have $g(x') = g(x'')$, since $x''$ appears locally as either $x'$ or $f^q(x')$, which $g$ does identify. Define the block map $h : X \to \{0,1\}^\Z$ by $h(x)_0 = 1$ if and only if $f^k(x)_{[0,2|w|+r|v|-1]} = w v^r u$ for some $k \in \{0, \ldots, p-1\}$. We clearly have $h \circ f = f$, but $h(x') \neq h(x'')$. Thus $h$ does not factor through $g$, which implies that $g$ is not the coequalizer of $f$ and $\ID_X$.
\end{proof}

For the next set of results, we need some dynamical notions.

\begin{definition}
A set of words $W \subset \B_\ell(X)$ is \emph{visibly blocking} for $f$ if
\begin{itemize}
\item for all $x \in X$, if $x_{[0,\ell-1]} \in W$, then $f(x)_{[0,\ell-1]} \in W$, and
\item for all $x, y \in X$ such that $x_{[0,\ell-1]} \in W$ and $x_i = y_i$ for all $i \geq 0$ ($i \leq \ell-1$), we have $f^n(x)_i = f^n(y)_i$ for all $n \in \N$ and $i \geq \ell$ ($i < 0$, respectively).
\end{itemize}
To a visibly blocking set $W \subset \B_\ell(X)$, we attach its \emph{characteristic function} $\chi_W : X \to \{0,1\}^\Z$, the block map defined by $\chi_W(x)_0 = 1$ if and only if $x_{[0, \ell-1]} \in W$.
\end{definition}

The following lemma is useful in general.

\begin{lemma}
\label{lem:OrderMap}
Suppose $X$ is a mixing sofic shift, $f : X \to X$ is reversible and $g : X \to P^\Z$, where $P$ is a partially ordered set, and $g(x)_0 \geq g(f(x))_0$ for all $x \in X$. Then $g \circ f = g$.
\end{lemma}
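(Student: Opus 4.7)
The plan is to reduce the claim to periodic points and then extend by density. I would proceed in three main steps.

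First I would promote the coordinate-$0$ hypothesis to all coordinates: applying $g(\,\cdot\,)_0 \geq g(f(\,\cdot\,))_0$ to $\sigma^i x$ in place of $x$ and using that both $g$ and $f$ commute with the shift, I obtain $g(x)_i \geq g(f(x))_i$ for every $i \in \Z$ and every $x \in X$.

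Second I would prove the equality $g(x) = g(f(x))$ for periodic $x \in X$. If $x$ has least period $q$, the set of $q$-periodic points of $X$ is finite and $f$-invariant, and because $f$ is reversible (hence a shift-commuting bijection of $X$), $f$ \emph{permutes} this finite set. Thus $f^k(x) = x$ for some $k \geq 1$, and chaining the coordinate-$0$ inequality around this $f$-cycle gives
\[ g(x)_0 \geq g(f(x))_0 \geq \cdots \geq g(f^k(x))_0 = g(x)_0, \]
forcing equality throughout, so $g(x)_0 = g(f(x))_0$. Running the same argument at every shift $\sigma^i x$ (which is still periodic) and combining with Step~1 yields $g(x) = g(f(x))$ for all periodic $x \in X$.

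Third I would conclude by continuity and density. The set $E = \{x \in X : g(x) = g(f(x))\}$ is the equalizer of the two continuous maps $g$ and $g \circ f$ into the Hausdorff space $P^\Z$, hence closed in $X$; by Step~2 it contains every periodic point. Since $X$ is a mixing sofic shift its periodic points are dense, so $E = X$, which is the claim.

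The only delicate point — and what I would flag as the key use of a hypothesis — is reversibility in Step~2: without it, $f$ might merely map the finite set of $q$-periodic points into itself without permuting it, one could not close up into a cycle $f^k(x) = x$, and the order-decrease could well be strict. The rest is standard density-plus-continuity machinery, with mixingness only entering to guarantee denseness of periodic points.
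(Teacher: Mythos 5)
Your proof is correct and follows essentially the same route as the paper's: both arguments rest on the density of periodic points in a mixing sofic shift and on chaining the inequality $g(x)_0 \geq g(f(x))_0$ around a finite $f$-cycle, which exists because the reversible $f$ permutes the finite set of $q$-periodic points. The only cosmetic difference is that the paper argues by contradiction (perturbing a witness of strict inequality to a nearby periodic point), whereas you prove equality on periodic points directly and extend by closedness of the equalizer; your explicit flagging of where reversibility is needed is accurate.
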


\begin{proof}
Suppose that $g(x)_0 \neq g(f(x))_0$ for some $x \in X$, so that we actually have $g(x)_0 > g(f(x))_0$. Since the set of spatially periodic points is dense in $X$, we may assume $x$ to be such a point. But then $x$ is also $f$-periodic with some period $p \in \N$, so that
\[ g(x)_0 > g(f(x))_0 \geq \ldots \geq g(f^p(x))_0 = g(x)_0, \]
a contradiction.
\end{proof}

Since $\chi_W$ clearly satisfies the above condition, we have the following.

\begin{corollary}
If $X$ is a mixing sofic shift, $f : X \to X$ is reversible and $W$ is a visibly blocking set for $f$, then $\chi_W \circ f = \chi_W$.
\end{corollary}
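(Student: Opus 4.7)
The plan is an almost immediate application of Lemma~\ref{lem:OrderMap}. Take $P=\{0,1\}$ as the two-element poset. The visibly blocking condition for $W$ says exactly that if $x_{[0,\ell-1]}\in W$ then $f(x)_{[0,\ell-1]}\in W$, which in terms of the characteristic function reads $\chi_W(x)_0=1\Rightarrow\chi_W(f(x))_0=1$, i.e.\ $\chi_W(x)_0\leq\chi_W(f(x))_0$ in the natural order on $\{0,1\}$.

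To match the hypothesis of Lemma~\ref{lem:OrderMap} (which wants a decrease), I would simply endow $P$ with the reversed order $1<0$; then the inequality becomes $\chi_W(x)_0\geq\chi_W(f(x))_0$ coordinatewise, and the lemma (applied to the mixing sofic $X$, the reversible $f$, and the block map $\chi_W$) delivers $\chi_W\circ f=\chi_W$. Equivalently and perhaps more cleanly, since $f$ is reversible one can substitute $y=f(x)$ to rewrite the visibly blocking inequality as $\chi_W(f^{-1}(y))_0\leq\chi_W(y)_0$, i.e.\ $\chi_W(y)_0\geq\chi_W(f^{-1}(y))_0$ in the standard order, and then apply Lemma~\ref{lem:OrderMap} to the reversible automaton $f^{-1}$ to conclude $\chi_W\circ f^{-1}=\chi_W$, whence $\chi_W=\chi_W\circ f$.

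There is essentially no obstacle. The only things worth verifying explicitly are that $\chi_W$ is a block map (immediate, since it has radius $\ell-1$ determined by a local test for membership in the finite set $W$) and that reversibility is genuinely used (it is, because the proof of Lemma~\ref{lem:OrderMap} relies on the fact that spatially periodic points of $X$ are $f$-periodic, which requires $f$ to be a bijective cellular automaton). The mixingness of $X$ enters only through the density of spatially periodic points invoked inside Lemma~\ref{lem:OrderMap}, so no additional hypothesis is needed here.
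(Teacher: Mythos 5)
Your proof is correct and takes essentially the same route as the paper, which justifies the corollary with the single remark that $\chi_W$ ``clearly satisfies the above condition'' of Lemma~\ref{lem:OrderMap}. Your explicit handling of the direction of the inequality --- reversing the order on $P$ or, equivalently, applying the lemma to $f^{-1}$ --- spells out a detail the paper glosses over, since the visibly blocking condition naturally yields $\chi_W(x)_0 \leq \chi_W(f(x))_0$ rather than the stated $\geq$.
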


\begin{lemma}[Theorem 4.5 of \cite{Lu10a}]
The classes of mixing and nonsensitive reversible cellular automata on full shifts are recursively inseparable.
\end{lemma}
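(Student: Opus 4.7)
The plan is to obtain this as a reduction from a pair of recursively inseparable computational properties of reversible Turing machines. Classically, given any enumeration of reversible Turing machines (obtained by Bennett's reversible simulation of arbitrary TMs together with an end-of-computation marker), the set of machines that halt on a blank tape and the set that loop forever from every input configuration are recursively inseparable, since one can reduce the halting problem and its complement into them. The goal is to produce a computable assignment $M \mapsto f_M$ of reversible TMs to reversible cellular automata on full shifts such that $f_M$ is topologically mixing whenever $M$ halts and $f_M$ is nonsensitive whenever $M$ loops.

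For the construction I would use a partitioned CA framework in the style of Morita and Kari, where the alphabet consists of a background vacuum symbol together with symbols encoding TM tape contents, head state, and direction, and where the local rule performs one step of $M$ inside each connected ``simulation region'' while fixing vacuum cells. Reversibility of the CA is automatic from reversibility of $M$ together with the partitioning discipline. To control the global dynamics, I would additionally couple this with a reversible ``swap/permutation'' layer that becomes active only in regions where $M$ has signalled halting, using a halting flag as a gate.

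For the mixing side, in any configuration one can place arbitrary finite simulation regions of $M$; when $M$ always halts, every such region eventually activates the swap layer, which can then be designed (using a standard reversible shuffling CA restricted to flagged cells) to connect any two prescribed patterns in a uniformly bounded number of steps once the flagged regions are large enough. Interlacing halted regions using the mixingness of the background full shift yields the required mixing quantifier $\forall u,v \; \exists N \; \forall n \geq N$. For the nonsensitive side, when $M$ loops on every input, I would exhibit an equicontinuity point: namely the all-vacuum configuration, whose small perturbations produce only isolated, non-halting simulation regions whose heads are trapped inside a bounded window forever, so the orbits of nearby points remain uniformly close. Equicontinuity implies nonsensitivity.

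The main obstacle is achieving genuine mixing while respecting reversibility. Standard CA constructions that produce mixing via ``cleanup'' and rewriting are fundamentally irreversible, so one has to pay for reversibility by keeping a Bennett-style history tape inside each simulation region and then argue that, once halting has occurred, the coupled swap layer acts transitively enough across the augmented state space to produce the uniform-in-$u,v$ mixing bound. Threading this argument through the partitioned structure, and verifying that the loop side really does give an equicontinuity point (rather than merely a nonwandering or Lyapunov-stable point), is the technical heart of the proof.
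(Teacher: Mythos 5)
The paper does not prove this lemma at all: it is imported verbatim as Theorem 4.5 of the cited work of Lukkarila, so there is no internal proof to compare your attempt against. Judged on its own terms, your proposal is a plausible high-level plan in the same spirit as Lukkarila's actual construction (reduce from a recursively inseparable pair of problems about reversible Turing machines, simulate the machine reversibly inside a partitioned CA, halting yields mixing, non-halting yields blocking words and hence nonsensitivity), but it is a plan rather than a proof, and the steps you defer are exactly where the difficulty lives.

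Two concrete gaps. First, on the mixing side you only argue about configurations containing well-formed simulation regions, but topological mixing quantifies over all words $u, v \in \B(S^\Z)$ of the full shift, most of which encode malformed or garbage content; a reversible CA cannot erase garbage (cleanup is inherently non-injective), so you must explain how information is transported across arbitrary ill-formed regions. This is where Lukkarila's signal and crossing machinery does its real work, and your ``swap layer gated by a halting flag'' is not specified enough to be checked there. Relatedly, your chosen inseparable pair (``halts on blank tape'' versus ``loops from every input'') does not match what the construction needs: to get mixing you need every simulation region, started from an \emph{arbitrary} tape content and head position rather than the blank tape, to eventually release its contents, which is a mortality-type condition; this is why the literature reduces from inseparable classes tied to the immortality problem for reversible Turing machines rather than blank-tape halting. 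Second, on the nonsensitive side you assert that the all-vacuum configuration is an equicontinuity point but defer the verification; the delicate point (explicitly flagged in the present paper's remark following the lemma) is that the blocking structures must have immobile borders through which no information passes in either direction for all time, and this property has to be engineered into the local rule, not observed afterwards. You have correctly located the technical heart of the argument, but you have not carried it out.
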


In general, nonsensitivity of a cellular automaton on a one-dimensional full shift is equivalent to the existence of \emph{blocking words} \cite{BlTi00}, of which elements of visibly blocking sets are a special case. However, in the proof of this particular theorem, if the automaton is nonsensitive, there always exists a visibly blocking set: The blocking words in the construction are bordered areas on which valid periodic runs of a Turing machine are simulated. No information can enter or escape such areas and their borders never move.

The actual result we will use is thus the following.

\begin{lemma}[Proved as Theorem 4.5 of \cite{Lu10a}]
\label{lem:NiceBlocks}
The classes of reversible mixing cellular automata and reversible cellular automata with visibly blocking sets on full shifts are recursively inseparable.
\end{lemma}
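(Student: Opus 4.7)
The plan is to inspect the reduction used in the proof of Theorem~4.5 of \cite{Lu10a} and to verify that it already establishes the stronger statement about visibly blocking sets, rather than merely about nonsensitivity. Recall that that theorem is proved by reducing the halting problem: given a Turing machine $M$, one builds a reversible cellular automaton $f_M$ on a suitable full shift $S^\Z$ such that $f_M$ is mixing whenever $M$ halts on empty input, and nonsensitive otherwise. To upgrade the conclusion to visibly blocking sets, I would fix a Turing machine $M$ that does not halt, look at the distinguished ``border'' or ``wall'' alphabet $B \subset S$ used to separate simulation regions in that construction, and verify directly that the set $W = \{b \in \B_1(S^\Z) \;|\; b \in B\}$ (or, more generally, the set of words of some fixed length $\ell$ containing a border pattern at a designated position) is visibly blocking for $f_M$ in the sense of our Definition.

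Concretely, I would check the two clauses of our definition of visibly blocking. The first clause, $x_{[0,\ell-1]} \in W \Rightarrow f_M(x)_{[0,\ell-1]} \in W$, follows because the local rule of the construction is defined to preserve border cells setwise. The second clause, which says that two configurations agreeing on the right (resp.\ left) of a border have iterated images that also agree on the right (resp.\ left) of that border, is exactly the property exploited in \cite{Lu10a} to conclude nonsensitivity, but it is in fact a two-sided information barrier built into the border mechanism: no information is ever transmitted across a border cell in either direction, because the local rule on each side of a border only consults cells on that same side together with the border itself. Granting this inspection, the same recursive inseparability reduction gives that no recursive set can contain all mixing reversible CAs on full shifts while excluding all reversible CAs with visibly blocking sets, which is the statement.

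The main obstacle will be the inspection step itself: one must open up the construction of \cite{Lu10a} and check that the information barriers produced by the borders are genuinely two-sided, rather than merely yielding \emph{some} one-sided blocking word whose existence suffices for nonsensitivity. Because the construction there uses reversible Turing machine simulations inside bounded regions delimited by symmetric border symbols, the two-sided barrier property holds essentially by design, but it should be extracted carefully before invoking it here.
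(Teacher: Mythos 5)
Your proposal matches the paper's justification: the paper likewise observes that in the construction for Theorem~4.5 of \cite{Lu10a}, the blocking words are bordered simulation areas whose borders never move and through which no information can pass in either direction, so the nonsensitive case in fact always yields a visibly blocking set, and the inseparability statement transfers. Your additional check of the first clause (that the border set is preserved by the local rule) is a reasonable elaboration of the same inspection, not a different route.
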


\begin{lemma}
\label{lem:NotPeriodic}
Let $X = S^\Z$. If the reversible CA $f : X \to X$ has a visibly blocking set but is not periodic, then $f$ and $\ID_X$ have no coequalizer in K3.
\end{lemma}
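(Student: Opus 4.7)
The plan is to invoke the characterization in the preceding proposition: $f$ and $\ID_X$ admit a coequalizer in K3 if and only if the smallest local subSFT equivalence relation $R$ on $X^2$ containing $\{(x, f(x)) : x \in X\}$ is itself a subSFT. Assuming such a coequalizer $g : X \to Y$ exists of radius $r$, with local rule $F : S^{2r+1} \to Y_1$, I would derive a contradiction by showing that $F$ cannot distinguish two $\sigma$-periodic configurations lying in different $f$-orbits.

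First, by replacing $f$ with a suitable power $f^p$ (which inherits all the hypotheses), assume that the visibly blocking set is a singleton $W = \{w^0\}$ fixed by $f$. Visible blocking then gives a well-defined local action $f_{\text{local}}$ on cells of each length $L'$ bounded by $w^0$. Non-periodicity implies that for every $K \in \N$ there exists a cell $c_0 \in S^{L'}$ (for some $L'$) whose $f_{\text{local}}$-orbit has size at least $K$: otherwise $f^{K!}$ would be the identity on the dense set $\{\INF (w^0 c) \INF : c \in S^*\}$ of $X$, hence on all of $X$.

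Next, applying $g \circ f = g$ to configurations of the form $\INF (w^0 c) \INF$ and comparing at positions $i \in [\ell + r, L - 1 - r]$ whose radius-$r$ window lies strictly inside a cell (where $L = \ell + L'$ and $\ell = |w^0|$), we obtain $F(c|_{[j, j+2r]}) = F(f_{\text{local}}(c)|_{[j, j+2r]})$ for every deep cell position $j \in [0, L' - 2r - 1]$. Iterating on $k$ shows that $F$ is constant on each set $\{f_{\text{local}}^k(c_0)|_{[j, j+2r]} : k \in \Z\}$.

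Finally, choose $K > |S|^{2r}$ and corresponding $L', c_0$. The cells in the orbit of $c_0$ each have one of at most $|S|^{2r}$ possible (first $r$, last $r$) boundary signatures, so the pigeonhole principle produces two distinct orbit elements $c_0, c_0'$ sharing their first $r$ and last $r$ symbols. Set
\[ x_1 = \INF (w^0 c_0) \INF, \qquad x_2 = \INF (w^0 c_0)(w^0 c_0') \INF. \]
The matching boundary symbols make every radius-$r$ window of $x_1$ involving a blocking word coincide with the corresponding window of $x_2$, and the orbit-constancy of $F$ makes their deep-inside cell windows agree too, forcing $g(x_1) = g(x_2)$. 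However $x_1$ has $\sigma$-period $L$ while $x_2$ has $\sigma$-period $2L$, and the $f$-invariant block map that reads two consecutive cells of length $L'$ bounded by $w^0$ and outputs their phase difference within the orbit of $c_0$ separates them; since this block map must factor through any coequalizer, the equality $g(x_1) = g(x_2)$ is impossible. The main obstacle is the pigeonhole step, which requires orbits of size exceeding $|S|^{2r}$, guaranteed by non-periodicity only for sufficiently large cell lengths.
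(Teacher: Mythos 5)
Your overall architecture is the same as the paper's: fix an arbitrary $g$ with $g \circ f = g$ of radius $r$, use the blocking structure to exhibit two spatially periodic points that $g$ is forced to identify, and then produce an $f$-invariant block map separating them, contradicting the universal property. However, your reduction ``replace $f$ by $f^p$ and assume the visibly blocking set is a singleton $\{w^0\}$ fixed by $f$'' contains a genuine gap that propagates to the final step. The definition of a visibly blocking set only guarantees $f(x)_{[0,\ell-1]} \in W$; it does not say \emph{which} element of $W$ the word becomes, and that image may depend on the adjacent cells (the blocking condition controls information flow \emph{across} the word, not the autonomous evolution of the word itself). So even after passing to the temporal period $p$ of $\INF w \INF$, you are not entitled to assume that a blocking word in an arbitrary context returns to itself. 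More seriously, the universal property of the coequalizer of $(f, \ID_X)$ only forces maps $h$ with $h \circ f = h$ to factor through $g$; your separating map is built from phases of the $f^p$-local action and, as described, is only $f^p$-invariant. Under a single application of $f$ the pattern $w^0 c\, w^0 c'\, w^0$ need not reappear, so $h \circ f = h$ is not established and the contradiction does not close.

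This is precisely where the paper's proof is more careful: its separating map records the entire forward-orbit set $\{ f^k(x)_{[0,m-1]} \;|\; k \in \N \}$ of the pattern between two blocking words, which is monotone non-increasing under $f$ and therefore exactly $f$-invariant by Lemma~\ref{lem:OrderMap}, with no assumption that $W$ is a singleton or that blocking words are fixed. Replacing your phase-difference map by such an orbit-set map (it still separates $x_1$ from $x_2$, whose temporal periods differ) would repair the argument. A minor further point: a radius-$r$ window touching a blocking word can reach $2r$ symbols into each adjacent cell, so your pigeonhole should match the first and last $2r$ symbols and requires orbits of size exceeding $|S|^{4r}$.
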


\begin{proof}
Let $W \subset S^\ell$ be a visibly blocking set, and let $w \in W$. Since $\INF w \INF$ is spatially periodic, it is temporally periodic with some least period $p \in \N$. Let $v \in S^*$ be such that no $x \in X$ with $x_{[0,|v|-1]} = v$ satisfies $f^p(x) = x$. Then $\INF(wvw)\INF$ is a spatially periodic point, and thus also has a least temporal period $q \in \N$. We easily see that $p \neq q$ and $p | q$. Denote $f^p(\INF(wvw)\INF) = \INF u \INF$, where $|u| = |wvw|$.

We proceed as in the proof of Proposition~\ref{prop:EventuallyPeriodic}. So, suppose that $g : X \to Y$ with some radius $r \in \N$ is such that $g \circ f = g$, and consider the points $x' = \INF (wvw) . w^r(wvw) \INF$ and $x'' = \INF u . w^r (wvw) \INF$. By the definition of $g$, we then have $g(x') = g(x'')$, since $x''$ locally appears as either $x'$ or $f^p(x')$, which $g$ cannot distinguish.

We then construct a block map $h : X \to Z$ with $h \circ f = h$ that does not factor through $g$, proving that $g$ is not the coequalizer of $f$ and $\ID_X$. For this, denote $m = (r+2)|w|+2|v|$, and let $P = 2^{S^m}$ and $Z = P^\Z$. The block map $h$ is defined as follows for all $x \in X$. If we have $x_{[-\ell,-1]}, x_{[m, m+\ell-1]} \in W$, then define
\[ h(x)_0 = \{f^k(x)_{[0,m-1]} \;|\; k \in \N\}. \]
Since $W$ is a visibly blocking set, this finite set of words depends only on $x_{[-\ell, m+\ell-1]}$. Otherwise, define $h(x)_0 = \emptyset$. It is now clear that $h(x') \neq h(x'')$, so that $h$ does not factor through $g$. Next, note that for $x \in X$, the condition $h(x)_0 = \emptyset$ depends only on $\chi_W(x)$, and since $\chi_W \circ f = \chi_W$, we have $h(x)_0 = \emptyset$ if and only if $h(f(x))_0 = \emptyset$. Then it is easy to see that $h(f(x))_0 \subset h(x)_0$ for all $x \in X$, so by Lemma~\ref{lem:OrderMap} we have $h \circ f = h$.
\end{proof}

\begin{proposition}
\label{prop:ChainTrans}
Let $X$ be a mixing sofic shift and $T = \{\INF 0 \INF\}$. For a reversible cellular automaton $f : X \to X$, the map $0_{XT}$ is a coequalizer of $f$ and $\ID_X$ in K3 if and only if $f$ is chain transitive.
\end{proposition}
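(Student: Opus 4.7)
The plan is first to reduce the claim to the following cleaner statement: $0_{XT}$ is a coequalizer of $(f,\ID_X)$ in K3 if and only if every morphism $g : X \to Y$ with $g \circ f = g$ is constant (with value a uniform point of $Y$). This reduction is immediate, because $T = \{\INF 0 \INF\}$ is a singleton: factoring $g$ through $0_{XT}$ forces $g$ to be constant, and in that case the mediating morphism $T \to Y$ is automatically unique.

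For the direction \emph{chain transitive $\Rightarrow$ every such $g$ is constant}, I would take $g : X \to Y$ with radius $r$ and local rule $G : \B_{2r+1}(X) \to \B_1(Y)$, and apply chain transitivity at level $n = 2r+1$. For arbitrary $u,v \in \B_n(X)$, a chain $x^1,\ldots,x^k$ with $x^1_{[0,n-1]} = u$, $x^k_{[0,n-1]} = v$ and $x^{i+1}_{[0,n-1]} = f(x^i)_{[0,n-1]}$ yields at each step $g(x^{i+1})_r = G(f(x^i)_{[0,2r]}) = g(f(x^i))_r = g(x^i)_r$, using $g \circ f = g$. Hence $G(u) = g(x^1)_r = g(x^k)_r = G(v)$, so $G$ is constant and therefore so is $g$, giving the universal property of $0_{XT}$.

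For the converse, if $f$ is not chain transitive I must produce a nonconstant block map $g$ with $g \circ f = g$. I would choose $n$ such that the digraph $\Gamma_n = (\B_n(X),\to_n)$ fails to be strongly connected, where $u \to_n v$ iff some $x \in X$ satisfies $x_{[0,n-1]} = u$ and $f(x)_{[0,n-1]} = v$. Letting $R_w \subseteq \B_n(X)$ denote the reflexive-transitive closure of $\{w\}$ under $\to_n$, I define a block map $g : X \to \mathcal{P}(\B_n(X))^\Z$ by $g(x)_i = R_{x_{[i,i+n-1]}}$. From $u \to_n v$ we get $R_v \subseteq R_u$, hence $g(f(x))_0 \subseteq g(x)_0$. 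The crucial and most delicate step --- and where I expect the main obstacle to lie --- is invoking Lemma~\ref{lem:OrderMap} with the partial order $\supseteq$ on $\mathcal{P}(\B_n(X))$: reversibility of $f$ together with density of spatially periodic points in the mixing sofic shift $X$ promote this one-sided inclusion to the full invariance $g \circ f = g$. Nonconstancy of $g$ follows because any two distinct SCCs $C,C'$ of $\Gamma_n$ contribute different reachable sets (each $R_w$ with $w \in C$ contains $C$, while $R_{w'}$ with $w' \in C'$ does not contain $C$ unless $C' \to_n^* C$, an asymmetry we can always exploit between an incomparable or strictly smaller SCC), so $g$ attains more than one value and thus cannot factor through $T$.
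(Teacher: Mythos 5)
Your proposal is correct and follows essentially the same route as the paper: the forward direction propagates the local rule along a chain at window size $2r+1$, and the converse uses exactly the reachable-set block map $x \mapsto R_{x_{[0,n-1]}}$ together with Lemma~\ref{lem:OrderMap}. The only (harmless) difference is your SCC discussion for nonconstancy, which can be shortcut: if no chain leads from $u$ to $v$ then $v \in R_v \setminus R_u$, so $R_u \neq R_v$ and the map already takes two values.
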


\begin{proof}
Suppose first that $f$ is chain transitive, and let $g : X \to Y$ be such that $g \circ f = g$. Let $r \in \N$ be its radius, and let $u,v \in \B_{2r+1}(X)$. By chain transitivity, there exists a chain $x^1, \ldots, x^k \in X$ such that $x^1_{[-r,r]} = u$, $x^k_{[-r,r]} = v$ and for all $i$, $f(x)^i_{[-r,r]} = x^{i+1}_{[-r,r]}$. This implies $g(x^1)_0 = g(x^2)_0 = \cdots = g(x^k)_0$, and since $u$ and $v$ were arbitrary, we have $g(x)_0 = g(x')_0$ for all $x, x' \in X$. Then there is a unique morphism $u : T \to Y$ (the symbol map $0 \mapsto g(x)_0$) with $g = u \circ h$.

Suppose then that $f$ is not chain transitive, so that there exist $n \in \N$ and $u,v \in \B_n(X)$ such that no chain from $u$ to $v$ exists in $X$. Denote $P = 2^{\B_n(X)}$, and define the block map $h : X \to P^\Z$ by
\[ h(x)_0 = \{ v \in \B_n(X) \;|\; \exists \mbox{~chain from $x_{[0,n-1]}$ to $v$} \} \]
for all $x \in X$. Since $h(f(x))_0 \subset h(x)_0$ for all $x \in X$, we have $h \circ f = h$ by Lemma~\ref{lem:OrderMap}. Since there is no chain from $u$ to $v$, the map $h$ is not trivial, and thus $0_{XT}$ is not the coequalizer of $f$ and $\ID_X$.
\end{proof}

\begin{theorem}
\label{thm:Coequalizers}
Let $T = \{\INF 0 \INF\}$. The classes of reversible cellular automata $f$ on full shifts $X = S^\Z$ for which $0_{XT}$ is a coequalizer of $f$ and $\ID_X$ in K3, and of those for which no coequalizer exists, are recursively inseparable.
\end{theorem}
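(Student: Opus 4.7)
The plan is to combine Proposition~\ref{prop:ChainTrans} and Lemma~\ref{lem:NotPeriodic} with Lemma~\ref{lem:NiceBlocks} in a short reduction. Suppose for contradiction that some total recursive function outputs $0$ on every reversible CA $f$ for which $0_{XT}$ coequalizes $(f, \ID_X)$ and $1$ on every reversible CA $f$ for which no coequalizer of $(f, \ID_X)$ exists. I will argue that such a function separates reversible mixing CAs from reversible CAs with visibly blocking sets on full shifts, contradicting Lemma~\ref{lem:NiceBlocks}.

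First I would verify that every reversible topologically mixing CA $f : X \to X$ gets output $0$. Topological mixing implies chain transitivity: for any $n \in \N$ and $u, v \in \B_n(X)$, mixing supplies some $x \in X$ with $x_{[0,n-1]} = u$ and $f^N(x)_{[0,n-1]} = v$ for a sufficiently large $N$, and then the orbit segment $x, f(x), \ldots, f^N(x)$ is itself a chain from $u$ to $v$ in the sense of the definition. Proposition~\ref{prop:ChainTrans} then gives that $0_{XT}$ is a coequalizer of $f$ and $\ID_X$, placing $f$ into the first class. Second, Lemma~\ref{lem:NotPeriodic} immediately guarantees that every \emph{non-periodic} reversible CA $f$ with a visibly blocking set gets output $1$.

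The hard part is the periodicity side condition in the second step: a reversible CA that carries both a visibly blocking set and a global period need not belong to the second class (by Proposition~\ref{prop:EventuallyPeriodic} it still admits a coequalizer, possibly not the zero morphism $0_{XT}$), so a priori such CAs could corrupt the reduction. To close this gap I would inspect the construction used to prove Lemma~\ref{lem:NiceBlocks} in \cite{Lu10a} and verify that on the visibly-blocking side the blocking regions encode non-halting Turing machine computations whose space-time diagrams do not globally repeat, so that the produced CAs are never periodic as self-maps of $X$. Granting this, the hypothetical recursive separator of our two classes becomes a recursive separator of the two classes of Lemma~\ref{lem:NiceBlocks}, yielding the desired contradiction and hence the recursive inseparability claimed in the theorem.
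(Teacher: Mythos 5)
Your overall reduction is the same as the paper's, and two of its three legs are carried out correctly: topological mixing implies chain transitivity, so Proposition~\ref{prop:ChainTrans} puts every reversible mixing CA in the first class, and Lemma~\ref{lem:NotPeriodic} puts every \emph{non-periodic} reversible CA with a visibly blocking set in the second class. You also correctly identify the one real difficulty: a periodic reversible CA with a visibly blocking set lies in neither class (it has a coequalizer by Proposition~\ref{prop:EventuallyPeriodic}, but on a nontrivial full shift that coequalizer is never $0_{XT}$), so the hypothetical separator is unconstrained on such inputs and the naive reduction breaks.

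Your proposed repair, however, is a genuine gap. You want to strengthen Lemma~\ref{lem:NiceBlocks} to an inseparability statement between mixing CAs and \emph{non-periodic} CAs with visibly blocking sets, by inspecting the construction of \cite{Lu10a} and arguing that the blocking regions carry computations ``whose space-time diagrams do not globally repeat.'' This is not established anywhere in the paper, and it is in tension with the paper's own description of that construction, in which the blocking words are bordered areas simulating \emph{valid periodic runs} of a Turing machine; whether the resulting CA is globally periodic is exactly the kind of delicate question one should not have to resolve about someone else's construction. The paper closes the gap internally and without touching \cite{Lu10a}: before invoking the separator it performs the decidable check of whether $f$ is periodic with period $p \leq |S|$ (answering ``visibly blocking set'' if so), and for the remaining periodic case it observes that a reversible CA with least period $p > |S|$ cannot be visibly eventually periodic --- every uniform point has eventual period at most $|S| < p$, while reversibility forces some point to have a larger eventual period --- so by Proposition~\ref{prop:EventuallyPeriodic} no coequalizer exists, such an $f$ genuinely belongs to the second class, and the separator must answer correctly on it. You should replace your appeal to the internals of \cite{Lu10a} with this argument (or something equivalent); as written, the periodic case of your reduction rests on an unverified and probably misstated claim.
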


\begin{proof}
Suppose on the contrary that there exists a Turing machine $M$ that accepts automata of the first kind, and rejects those of the second. We use $M$ to recursively separate the classes of mixing reversible CA, and those that have a visibly blocking set, contradicting Lemma~\ref{lem:NiceBlocks}.

Let $X = S^\Z$, and let $f : X \to X$ be a reversible cellular automaton with radius $r \in \N$. First, if $f$ is periodic with period $p \leq |S|$ (which is easy to decide), we answer `visibly blocking set'. If $f$ does not have a low period, we give $f$ as input to $M$, and return `mixing' if $M$ answers `trivial coequalizer', and `visibly blocking set' if $M$ answers `no coequalizer'.

We prove the correctness of this algorithm. First, the algorithm always halts, since $M$ does. Second, suppose that $f$ is mixing. Then it is in particular chain transitive and not periodic, so that the trivial map is a coequalizer of $f$ and $\ID_X$ by Proposition~\ref{prop:ChainTrans}. Thus the above algorithm correctly returns `mixing'. Next, suppose $f$ has a visibly blocking set. If $f$ has a small period, this is noticed in the first part of the algorithm, and we correctly return `visibly blocking set'. Suppose thus that $f$ has no small period. If it has no period whatsoever, then by Lemma~\ref{lem:NotPeriodic}, $f$ and $\ID_X$ have no coequalizer, and the algorithm correctly returns `visibly blocking set'.

Finally, suppose $f$ has a least period $p > |S|$. Since every unary point has period at most $|S|$, $f$ is not visibly eventually periodic, and Proposition~\ref{prop:EventuallyPeriodic} states that $f$ and $\ID_X$ have no coequalizer. Thus the algorithm correctly returns `visibly blocking set', and we are done.
\end{proof}

This shows that the computation of colimits is impossible in general, which is in sharp contrast with Proposition~\ref{prop:LimitsExist}.

\begin{corollary}
Given a diagram in K3 and its co-cone, it is undecidable whether this co-cone is a colimit of the diagram.
\end{corollary}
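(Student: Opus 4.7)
The plan is to reduce the problem directly to Theorem~\ref{thm:Coequalizers}. Given a reversible cellular automaton $f : X \to X$ on a full shift $X = S^\Z$, I would consider the diagram $D$ in K3 consisting of a single object $X$ and the two parallel morphisms $f, \ID_X : X \to X$. Together with the trivial subshift $T = \{\INF 0 \INF\}$ and the zero morphism $0_{XT} : X \to T$, this forms a co-cone of $D$, since $0_{XT} \circ f = 0_{XT} = 0_{XT} \circ \ID_X$ trivially.

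Next, I would observe that a colimit of this particular diagram is by definition a coequalizer of $f$ and $\ID_X$. Hence the co-cone $(T, 0_{XT})$ is a colimit of $D$ if and only if $0_{XT}$ is a coequalizer of the pair $(f, \ID_X)$ in K3. In particular, if no coequalizer exists, then $(T, 0_{XT})$ is certainly not a colimit.

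Now suppose, for contradiction, that there is a decision procedure that, given a diagram in K3 and a co-cone of it, determines whether the co-cone is a colimit. Applying this procedure to the instances $(D, (T, 0_{XT}))$ constructed above would yield an algorithm that accepts every reversible CA $f$ for which $0_{XT}$ is a coequalizer of $(f, \ID_X)$, and rejects every $f$ for which no coequalizer exists. This would recursively separate the two classes in Theorem~\ref{thm:Coequalizers}, contradicting the theorem. The argument is essentially packaging; there is no real obstacle once one identifies the coequalizer problem as a special case of the colimit-recognition problem.
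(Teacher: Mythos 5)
Your reduction is correct and is exactly the argument the paper intends (the corollary is stated without proof as an immediate consequence of Theorem~\ref{thm:Coequalizers}): the co-cone $(T, 0_{XT})$ of the parallel-pair diagram is a colimit iff $0_{XT}$ is a coequalizer of $(f,\ID_X)$, and it is certainly not a colimit when no coequalizer exists, so a decision procedure would recursively separate the two classes of the theorem. Nothing is missing.
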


In Theorem~\ref{thm:Coequalizers}, we proved in particular that it is undecidable whether the pair $(f, \ID_X)$ has the trivial morphism $0_{XT}$ as its coequalizer. One may ask whether $0_{XT}$ can be replaced with some other morphism, and in particular whether there is an analogue of Rice's theorem for coequalizers. We provide some evidence to the contrary with the following result.

\begin{proposition}
Let $X$ be an object of K(3/4), and $f : X \to X$ a morphism. Then $\ID_X$ is a coequalizer for $f$ and $\ID_X$ in K(3/4) if and only if $f = \ID_X$.
\end{proposition}

\begin{proof}
First, it is easy to see that $\ID_X$ is the coequalizer of the pair $(\ID_X, \ID_X)$. On the other hand, if $f \neq \ID_X$, there exists $x \in X$ with $f(x) \neq x$. Then for any map $g : X \to Y$ with $g \circ f = g$, we must have $g(x) = g(f(x))$, so $g$ is not injective, in particular $g \neq \ID_X$.
\end{proof}

\section{Properties of the Symbolic Categories}
\label{sec:Properties}

In Proposition~\ref{prop:LimitsExist}, we showed that the categories K(2/3/4) are finitely complete. The goal of this section is to extend these results as much as possible. The main results here are that K3 and K4 are also regular, coherent and extensive, while K2 is only extensive. We also show that none of these categories is exact.

We first consider the case of regularity, as it is a prerequisite of both coherency and exactness. Intuitively, a regular category is one where every morphism has a well-behaved image object, so it should morally be true that K3 and K4 are regular, but K2 is not. Namely, the regularity of a category is equivalent to the following two conditions: First, for all morphisms $f : X \to Y$, there exists an \emph{image factorization} $f = m \circ e$, where $e : X \to Z$ is a morphism and $m : Z \to Y$ a monomorphism, such that for all other such factorizations $f = m' \circ e'$ we have $m \leq m'$ as subobjects of $Y$. Second, the image factorizations are stable under pullback. We could prove the regularity of K3 using this condition, but follow the definition instead, since our argument then also proves Proposition~\ref{prop:RegularEpis}.

\begin{proposition}
\label{prop:Regularity}
The categories K3 and K4 are regular.
\end{proposition}

\begin{proof}
First, the categories are finitely complete by Proposition~\ref{prop:LimitsExist}. Let then $f : X \to Y$ be a morphism in K(3/4), and let $p_1, p_2 : K \to X$ be the projections from the kernel pair $K = \{ (x,x') \in X^2 \;|\; f(x) = f(x') \}$ of $f$. We show that the codomain restriction $f : X \to f(X)$ of $f$, which exists in K(3/4), is a coequalizer for $p_1$ and $p_2$. Let thus $g : X \to Z$ be such that $g \circ p_1 = g \circ p_2$, or equivalently, $\Ker{f} \subset \Ker{g}$. By Corollary~\ref{cor:ContConnector}, there exists a unique morphism $u : f(X) \to Z$ such that $g = u \circ f$, and thus $f$ really is the coequalizer of $p_1$ and $p_2$. This also shows that in K(2/3/4), every epimorphism is regular, being the coequalizer of its kernel pair, and thus proves Proposition~\ref{prop:RegularEpis}.

Let then $f : X \to Y$ be a regular epimorphism in K(3/4), that is, a surjective block map, and let $g : Z \to Y$ be arbitrary. We show that the pullback of $f$ along $g$, that is, the projection $p_2 : X \times_Y Z \to Z$, is also a regular epimorphism. For that, let $z \in Z$ be arbitrary. Since $f$ is surjective, there exists $x \in X$ with $f(x) = g(z)$, and then $(x, z) \in X \times_Y Z$, thus $p_2(x, z) = z \in p_2(X \times_Y Z)$. This shows that $p_2$ is surjective, and thus a regular epimorphism.
\end{proof}

This proof does not work in K2, since the codomain restriction cannot be performed for morphisms whose image is proper sofic.

\begin{example}
Let $f : X \to Y$ be a morphism of K2 such that $f(X)$ is proper sofic. For all $n \in \N$, let $Y_n \subset Y$ be the subshift defined by the forbidden words of $Y$, together with all the words of length at most $n$ that do not occur in $f(X)$. Then each $Y_n$ is an SFT with $Y_{n+1} \subset Y_n$ and $f(X) = \bigcap_{n \in \N} Y_n$. Then for any factorization $f = m \circ e$, where $m : Z \to Y$ is a monomorphism, we have $Y_n \subset m(Z)$ for some $n \in \N$ since $m(Z)$ is an SFT. Then the factorization $f = m_n \circ e_n$, where $e_n : X \to Y_n$ is the codomain restriction of $f$ and $m_n : Y_n \to Y$ the inclusion map, does not satisfy $m_n \leq m$, and thus no image factorization for $f$ exists. This shows that K2 is not a regular category.
\end{example}

Next, we turn to coherency. By definition, since K2 is not regular, it cannot be coherent either. The intuition for coherency is the existence of well-behaved binary unions of subobjects, so K3 and K4 should have this property.

\begin{proposition}
\label{prop:Coherency}
The categories K3 and K4 are coherent.
\end{proposition}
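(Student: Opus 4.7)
The plan is to unpack the definition of coherent and verify the two missing conditions directly, leveraging the concrete description of monomorphisms and pullbacks in K3 established earlier.

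First I would observe that, since monomorphisms in K3 are exactly the injective block maps (row ``monic'' of Table~\ref{tab:Morphisms}), and two injective block maps $m_1 : Y_1 \to X$ and $m_2 : Y_2 \to X$ are isomorphic as subobjects iff $m_1(Y_1) = m_2(Y_2)$, the subobject poset $\Sub(X)$ is canonically identified with the poset of sofic subshifts of $X$ ordered by inclusion. This identification reduces the abstract definitions of ``binary union'' and ``pullback of a subobject'' to set-theoretic operations on sofic subshifts.

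Second, to produce binary unions, given sofic subshifts $Y_1, Y_2 \subseteq X$, I would take the ordinary set-theoretic union $Y_1 \cup Y_2$. This is again a sofic subshift of $X$, because the class of sofic shifts is closed under finite unions (the union of regular languages is regular, or alternatively one can concretely build an SFT cover of $Y_1 \cup Y_2$ from covers of $Y_1$ and $Y_2$ via $Y_1 \amalg Y_2$). The inclusion $Y_1 \cup Y_2 \hookrightarrow X$ is clearly the least upper bound of the inclusions of $Y_1$ and $Y_2$ in $\Sub(X)$, so binary unions exist.

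Third, for stability under base change, let $f : X \to Y$ be any morphism and $Z_1, Z_2 \in \Sub(Y)$. From the explicit computation of pullbacks following Proposition~\ref{prop:LimitsExist}, the base change functor $f_*$ sends the inclusion $Z_i \hookrightarrow Y$ to the inclusion $f^{-1}(Z_i) \hookrightarrow X$. Since preimages commute with unions, $f^{-1}(Z_1 \cup Z_2) = f^{-1}(Z_1) \cup f^{-1}(Z_2)$, which expresses precisely that $f_*(Z_1 \cup Z_2) = f_*(Z_1) \cup f_*(Z_2)$ in $\Sub(X)$. Combined with Proposition~\ref{prop:Regularity}, this gives coherency. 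The only nontrivial step is the closure of sofic shifts under binary unions, and even this is essentially standard; no new technical obstacle is expected.
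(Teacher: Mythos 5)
Your proposal is correct and follows essentially the same route as the paper: reduce subobjects of $X$ to sofic subshifts via injectivity of monomorphisms, take set-theoretic unions (sofic shifts being closed under finite unions) as the binary unions in $\Sub(X)$, and use the fact that preimages commute with unions to get stability under base change, with regularity supplied by Proposition~\ref{prop:Regularity}. The paper's proof is the same argument phrased with explicit inclusion maps rather than the poset identification.
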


\begin{proof}
Let $f : Y \to X$ and $g : Z \to X$ be monomorphisms in K(3/4), that is, injective block maps. Without loss of generality we assume $Y$ and $Z$ to be subshifts of $X$, and $f$ and $g$ to be the respective inclusion maps.

We claim that the inclusion map $i : Y \cup Z \hookrightarrow X$ is a least upper bound for $f$ and $g$ in $\Sub(X)$. First, $i$ is an upper bound, since the inclusion maps $i_1 : Y \hookrightarrow Y \cup Z$ and $i_2 : Z \hookrightarrow Y \cup Z$ satisfy $f = i \circ i_1$ and $g = i \circ i_2$. Suppose then that an inclusion map $h : Q \hookrightarrow X$ is another upper bound for $f$ and $g$, so that there exist morphisms $h_1 : Y \to Q$ and $h_2 : Z \to Q$ such that $f = h \circ h_1$ and $g = h \circ h_2$. Since $h_1$ and $h_2$ must be monic, this means just that $Y \subset Q$ and $Z \subset Q$, so that $Y \cup Z \subset Q$, and then $h$ factors through $i$.

Let then $k : T \to X$ be any morphism, and recall the definition of the base change functor $k^* : \Sub(X) \to \Sub(T)$. Now, the pullback $k^*(f) : Y \times_X T \to T$ is the second projection from the fiber product $\{ (y,t) \in Y \times T \;|\; k(t) = y \}$, and thus isomorphic to the inclusion of $k^{-1}(Y)$ into $T$. Then, the union of the pullbacks of $f$ and $g$ is isomorphic to the inclusion of $k^{-1}(Y) \cup k^{-1}(Z)$ into $T$, and the pullback of their union to the inclusion of $k^{-1}(Y \cup Z)$ into $T$, and since preimages commute with unions, these are the same subobject.
\end{proof}

We also note that in finitely complete categories (which K(2/3/4) are), every pair of subobjects $f : Y \to X$ and $g : Z \to X$ of $X$ has a greatest lower bound, given by their pullback, which is preserved by the base change maps. In our case, this corresponds to the intersection of subshifts, and indeed the classes of SFTs, sofic shifts and all subshift are closed under intersection.

Next, we study the extensiveness of K(2/3/4). This property intuitively corresponds to the existence of all disjoint unions as well-behaved coproducts. Since all three categories have coproducts that are set-theoretically disjoint unions, we should expect them to be extensive, and again this is indeed the case.

\begin{proposition}
The categories K(2/3/4) are extensive.
\end{proposition}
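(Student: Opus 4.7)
My plan is to verify the definition of extensiveness directly, exploiting the fact that in K2 and K3 the coproduct of $X$ and $Y$ is the symbol-disjoint union $X \dot\cup Y$ (after relabelling alphabets if needed) with the coproduct injections $i_X,i_Y$ being the inclusions into $X \dot\cup Y$. First I would handle the existence of the required limits and colimits. Binary coproducts were already noted, and the empty subshift is an initial object in both categories. For the pullback of a coproduct injection $i_A : A \to A \dot\cup B$ along an arbitrary morphism $k : Z \to A \dot\cup B$, I would argue that it is (up to isomorphism) the inclusion of $k^{-1}(A) = \{z \in Z \;|\; k(z)_0 \in \al{A}\}$ into $Z$: because $\al{A}$ and $\al{B}$ are disjoint, this condition is captured by finitely many forbidden patterns of length $2r+1$, where $r$ is the radius of $k$, so $k^{-1}(A)$ is a subSFT of $Z$, and therefore an SFT in K2 and a sofic shift in K3. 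This handles all pullbacks of coproduct injections in both categories.

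For the main equivalence, consider a commutative diagram as in the definition, with middle object $Z$ and middle morphism $k : Z \to A \dot\cup B$. For the forward direction I would assume both squares are pullbacks, so up to isomorphism $X = k^{-1}(A)$ and $Y = k^{-1}(B)$ and the top arrows are inclusions into $Z$. Since $\al{A} \cap \al{B} = \emptyset$ in $A \dot\cup B$, we have $X \cap Y = \emptyset$ and $X \cup Y = Z$, and the alphabets of $X$ and $Y$ are disjoint (being preimages of disjoint alphabets under a symbol map, after recoding). Hence $Z = X \dot\cup Y$ with the inclusions as coproduct injections, which is precisely the coproduct of $X$ and $Y$.

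For the reverse direction I would assume the top row is a coproduct, so without loss of generality $Z = X \dot\cup Y$ with $X \to Z$ and $Y \to Z$ the symbol inclusions. Commutativity of the two squares gives $k(X) \subset A$ and $k(Y) \subset B$, whence $X \subset k^{-1}(A)$ and $Y \subset k^{-1}(B)$; these inclusions are equalities because $X \cup Y = Z$, $X \cap Y = \emptyset$, and $k^{-1}(A) \cap k^{-1}(B) = k^{-1}(A \cap B) = \emptyset$. The universal property of the pullback then follows automatically: any cone consisting of $g : T \to A$ and $h : T \to Z$ with $i_A \circ g = k \circ h$ forces $h(T) \subset k^{-1}(A) = X$, and the restriction of $h$ to codomain $X$ is the unique mediating morphism. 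The only genuinely subtle point here, and the place where one must be mildly careful, is checking that $k^{-1}(A)$ really is an SFT (not merely sofic) in K2, which is why I would explicitly compute its window size from the radius of $k$ and the window size of $Z$; the rest of the argument is purely set-theoretic, driven by the disjointness of alphabets in the symbol-disjoint union.
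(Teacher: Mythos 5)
Your proposal is correct and follows essentially the same route as the paper: coproducts are symbol-disjoint unions, the pullback of a coproduct injection along $k$ is the preimage $k^{-1}(A)$ (a subSFT of $Z$ by the disjointness of the alphabets, which settles the K2 case), and both directions of the equivalence reduce to the set-theoretic decomposition $Z = k^{-1}(A) \,\dot{\cup}\, k^{-1}(B)$. The only cosmetic difference is that the paper realizes the pullback as the graph-like subshift $\{(f(x),j_1(x))\} \subset A \times Z$ and cites the coherency proof for the preimage identification, whereas you work directly with the inclusion of $k^{-1}(A)$ into $Z$ and verify the universal property by hand; these presentations are isomorphic.
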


\begin{proof}
First, we saw in Section~\ref{sec:Limits} that the categories have all finite coproducts, given by the symbol-disjoint unions of subshifts. It remains to consider the commutative diagram
\begin{center}
\begin{tikzpicture}[node distance=1.5cm, auto]
	\node (a) {$A$};
	\node (ab) [right of=a] {$A \mathop{\dot{\cup}} B$};
	\node (b) [right of=ab] {$B$};
	\node (x) [above of=a]{$X$};
	\node (z) [right of=x] {$Z$};
	\node (y) [right of=z] {$Y$};
	\draw[->] (a) to node {$i_1$} (ab);
	\draw[->] (b) to node [swap] {$i_2$} (ab);
	\draw[->] (x) to node [swap] {$j_1$} (z);
	\draw[->] (y) to node {$j_2$} (z);
	\draw[->] (x) to node {$f$} (a);
	\draw[->] (z) to node {$h$} (ab);
	\draw[->] (y) to node {$g$} (b);
\end{tikzpicture}
\end{center}
where $i_1$ and $i_2$ are the coproduct inclusion maps of $A$ and $B$, respectively. First, suppose that the two squares are pullback diagrams. Then $j_1$ and $j_2$ are monomorphisms, and by the argument in the final paragraph of the proof of Proposition~\ref{prop:Coherency} (which is valid also in K2), we have $j_1(X) = h^{-1}(A)$ and $j_2(Y) = h^{-1}(B)$. Then $Z = h^{-1}(A \mathop{\dot{\cup}} B) = h^{-1}(A) \mathop{\dot{\cup}} h^{-1}(B) = j_1(X) \mathop{\dot{\cup}} j_2(Y)$. But this means exactly that $Z$ is a coproduct of $X$ and $Y$ with the injections $j_1$ and $j_2$.

Suppose conversely that $Z$ is a coproduct of $X$ and $Y$ with the injections $j_1$ and $j_2$. For all $x \in X$, define $\phi(x) = (f(x),j_1(x)) \in A \times Z$. We then have $i_1(a) = h(z)$ for $(a,z) \in A \times Z$ if and only if $z \in j_1(X)$ and $(a,z) = \phi(j_i^{-1}(z))$, so that $\phi(X)$ (which is an object also in K2, since $j_1$ is injective) with the corresponding projections is the pullback of $i_1$ and $h$. Since $j_1$ is injective, so is $\phi$, and then it is clear that $X$, $f$ and $j_1$ form a pullback for $i_1$ and $h$.
\end{proof}

Finally, we consider exactness, which intuitively corresponds to the property that every equivalence relation has a well-defined quotient. The categorical formalization of equivalence relations are congruences, and having a quotient object corresponds to being effective, that is, realized as a kernel pair of a morphism, which acts as the canonical projection map. In general, all equalizers in a regular category give rise to congruences, and exactness captures the converse situation.

In the categories K(2/3/4), all monomorphisms are injections, so congruences are essentially subshift equivalence relations (recall their definition from Section~\ref{sec:Prelims}). Since a subshift equivalence relation is effective if and only if it is a local subSFT equivalence relation, the category K3 is exact if and only if every sofic equivalence relation is a local subSFT equivalence relation. But this is blatantly false, as shown by the proper sofic relation on $\{0,1,2\}^\Z$ that equates two configurations $x$ and $y$ if and only if $x = y$ or $x = \tau(y)$, where $\tau : \{0,1,2\}^\Z \to \{0,1,2\}^\Z$ is the symbol permutation $(1 \; 2)$. Thus K3 is not an exact category, and neither is K4. Furthermore, Example~\ref{ex:NotLocal} shows that not all congruences are effective even in K2.

\section{Conclusions}

In this article, we have presented a study of the basic properties of thirteen natural symbolic categories. In many cases, we characterized basic categorical properties of morphisms. We also considered natural symbolic dynamical properties of objects, and showed that many of them can be expressed in the language of category theory. We then studied the limits and colimits of the categories, all of which except coequalizers correspond to very simple and natural constructions. Finally, we established some nice regularity properties for the categories K(2/3/4).

We consider the two main results of this paper to be the decidability of split epicness in K(2/3), and the uncomputability of coequalizers in the same categories. The first can be seen as a dual to the Extension Lemma, and its proof uses a Ramsey-type argument together with the classical Marker Lemma. The main idea is that given any existing section, one can use the Marker Lemma to construct another section with a bounded radius, considering periodic and nonperiodic parts separately, and in this sense the construction is similar to the proofs of Factor Theorem and Embedding Theorem in \cite{LiMa95} and the main result of \cite{Sa12b}. The second result is basically an application of the main construction in \cite{Lu10a}, together with some dynamical characterizations and conditions on the existence of coequalizers.

However, many natural problems remain unsolved, including decidability of split monicness in (T/K)(2/3), a more natural characterization of monicness in M(2/3), categoricity of many symbolic dynamical properties of morphisms, and existence of coequalizers of some restricted classes of morphism pairs. The study of these problems, and category theoretical notions in general, would probably give rise to many more interesting problems.

It would also be interesting to consider other symbolic categories, such as the category of coded systems or that of minimal subshifts. The category of minimal subshifts is probably very different from all of the categories considered here. For example, every block map between minimal subshifts is automatically surjective, so every morphism of the category is epic. The multidimensional setting is also a possible generalization. The two-dimensional analogue of K2 already contains subshifts without any periodic points, most properties of its objects are undecidable, and there are multiple nonequivalent generalizations of the mixing property of one-dimensional SFTs. Thus we expect the category of two-dimensional SFTs to behave much worse than K2 at least in terms of the categorical properties in Section~\ref{sec:Other}.

\section*{Acknowledgements}

We are thankful to Pierre Guillon for many useful discussions especially on monomorphisms. The first part of Example~\ref{ex:XORMonicness} is due to Silvio Capobianco, and was communicated to us by Guillon, although our proof is different. We are also thankful to Guillaume Theyssier for suggesting the addition of the category K4 of all subshifts. Finally, we would like to thank the anonymous referees for their valuable comments.

\section*{References}

\bibliographystyle{elsarticle-num}
\bibliography{../../../bib/bib}

\end{document}